\documentclass[12pt]{article}
\usepackage{amsmath}
\usepackage{amsmath,amssymb, amsthm, latexsym, amsfonts, epsfig, color,graphicx,}
\usepackage{mathrsfs}
\usepackage{indentfirst}
\usepackage{chngcntr}

\begin{document}
\renewcommand{\a}{\alpha}
\newcommand{\D}{\Delta}
\newcommand{\ddt}{\frac{d}{dt}}
\counterwithin{equation}{section}
\newcommand{\e}{\epsilon}
\newcommand{\eps}{\varepsilon}
\newtheorem{theorem}{Theorem}[section]

\newtheorem{proposition}{Proposition}[section]
\newtheorem{lemma}[proposition]{Lemma}
\newtheorem{remark}{Remark}[section]
\newtheorem{example}{Example}[section]
\newtheorem{definition}{Definition}[section]
\newtheorem{corollary}{Corollary}[section]
\makeatletter
\newcommand{\rmnum}[1]{\romannumeral #1}
\newcommand{\Rmnum}[1]{\expandafter\@slowromancap\romannumeral #1@}
\makeatother

\title{\bf Dispersive decay for the  Nonlinear magnetic Schr\"{o}dinger equation\footnote{This work was supported by  National Natural Science Foundation of China (61671009, 12171178).}}
\author{Lei Wei,\,\,  Zhiwen Duan\footnote{Corresponding author.}\\
{\small {\it  School of Mathematics and Statistics, Huazhong University of Science }}\\
{\small {\it and Technology, Wuhan, {\rm 430074,} P.R.China}}  \\
{\small {\it Email: d201880013@hust.edu.cn}}\\
{\small {\it Email: duanzhw@hust.edu.cn}}}
\date{}
\maketitle

\centerline{\large\bf Abstract }
\par
In this paper, we obtain the dispersive estimates and global well-posedness of the nonlinear magnetic Schr\"odinger equation in $\mathbb{R}^{3}$ with nonlinearity $|u|^{p-1}u$ with exponent $\frac{5}{3}< p<5$ when the initial value stays in a suitable space  $\Sigma_{s}$. By proving the resolvent estimates with weight functions and the \textquotedblleft almost equivalence\textquotedblright  between $(-\Delta_{A})^{\frac{s}{2}}$ and $(-\Delta)^{\frac{s}{2}}$, we obtain the Strichartz estimates of  $|J_{A}(t)|^{s}u:=e^{\frac{i|x|^{2}}{4t}}(-t^{2}\Delta_{A})^{\frac{s}{2}}e^{\frac{-i|x|^{2}}{4t}}u$, therefore the dispersive decay estimates are obtained.\\

\noindent{\bf Key words:} Nonlinear magnetic Schr\"{o}dinger equation; Dispersive decay; Resolvent estimates; Strichartz estimates.\\
\par
\noindent {\bf AMS Subject Classifications 2020:}\  35B40; 35Q41; 35Q55; 81Q20.

\section{Introduction}
In this paper, we consider the nonlinear magnetic Schr\"odinger equation
\begin{equation}\label{1.1}
\left\{
\begin{split}
&(i\partial_{t}+\Delta_{A})u+\rho|u|^{p-1}u=0, \ x\in \mathbb{R}^{3},\ t\geqslant1,\\
&u(1)=u_{0},
\end{split}
\right.
\end{equation}
where $\Delta_{A}:=(\nabla+iA)^{2}=(\nabla+iA)(\nabla+iA),\ \rho=\pm1,\ \frac{5}{3}< p<5$ and $A(x)=(A_1(x), A_2(x), A_3(x))\in C^{2}(\mathbb{R}^{3},\mathbb{R}^{3})$ is a real-valued vector function which satisfies the following hypothesis $ (\textbf H)$: for some small $0<\delta<1$, 
\begin{equation}\label{1.2}
|A(x)|+|\nabla A(x)|+|\nabla\nabla A(x)|\leqslant \delta\langle x \rangle^{-12}.
\end{equation}
Therefore, we claim that $\sigma(\Delta_{A})=(-\infty,0)$. In fact, for $\forall u\in \mathscr{S}$, we have
\begin{equation}\label{1.3}
((\nabla+iA)^{2}u,u)_{L^{2}}
=-\int_{\mathbb{R}^{3}}\sum\limits_{j=1}^{3}|(\partial_{x_{j}}+iA_{j})u|^{2}dx<0.
\end{equation}
Furthermore, from Lemma A.2 in \cite{KK}, we obtain that $0$ is neither an eigenvalue nor a resonance of the magnetic Schr\"odinger operator $\Delta_{A}$ and $\sigma_{sc}(\Delta_{A})=\emptyset$.\\

We denote by $\Sigma_{s}$ ($s>0$) the Hilbert space $H^{s}_{x}(\mathbb{R}^{3})\cap L^{2}(\mathbb{R}^{3},|x|^{s}dx)\cap\dot{H}_{x}^{s_{1}}(\mathbb{R}^{3},|x|^{s_{2}}dx)$ equipped with the norm
\begin{equation*}
\|u\|_{\Sigma_{s}}^{2}:=\|u\|_{H_{x}^{s}(\mathbb{R}^{3})}^{2}+\||x|^{s}u\|^{2}_{L^{2}_{x}(\mathbb{R}^{3})}+\||x|^{s_{2}}(-\Delta)^{\frac{s_{1}}{2}}u\|^{2}_{L^{2}_{x}(\mathbb{R}^{3})},
\end{equation*}
where $s=s_{1}+s_{2}$ and $0<s_{i}<s$, $i=1,2$.\\

Our main result is the following:
\begin{theorem}\label{theorem1.1}
Let $A(x)$ satisfy hypothesis $(\textbf H)$ and $\frac{5}{3}<p<5$. Then for $\forall\varepsilon\in(0,1)$, $\forall s_{0}\in(\frac{3}{2},\frac{5}{3})$, $u_{0}\in\Sigma_{s}$ ($s>s_{0}$), and $\|u_{0}\|_{\Sigma_{s}}$ is sufficiently small, there exists a constant $C>0$ such that the solution of equation \eqref{1.1} satisfies the following decay estimate
\begin{equation}\label{1.4}
\|u\|_{L^{\infty}_{x}(\mathbb{R}^{3})}\leq Ct^{(\varepsilon-\frac{3}{2})\cdot\frac{3}{2s_{0}}}\cdot\|u_{0}\|_{\Sigma_{s}},\  t\geqslant1.
\end{equation}
\end{theorem}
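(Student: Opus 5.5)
The plan is to follow the standard vector-field (pseudo-conformal) approach, adapted to the magnetic Laplacian through the operator $|J_{A}(t)|^{s}u:=e^{\frac{i|x|^{2}}{4t}}(-t^{2}\Delta_{A})^{\frac{s}{2}}e^{\frac{-i|x|^{2}}{4t}}u$ from the abstract. There are four steps.

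\textbf{Step 1 (pointwise bound).} I will first prove a Gagliardo--Nirenberg type inequality in terms of $J_{A}$. Set $v=e^{-\frac{i|x|^{2}}{4t}}u$, so $|v|=|u|$. Euclidean Gagliardo--Nirenberg gives, for $s_{0}>\frac32$,
\[
\|u\|_{L^\infty}\lesssim \|v\|_{L^2}^{1-\frac{3}{2s_{0}}}\|(-\Delta)^{\frac{s_0}{2}}v\|_{L^2}^{\frac{3}{2s_0}}.
\]
Combined with $\|(-\Delta)^{\frac{s_0}{2}}v\|_{L^2}=t^{-s_0}\|(-t^2\Delta)^{\frac{s_0}{2}}v\|_{L^2}$, this yields
\[
\|u\|_{L^\infty}\lesssim t^{-\frac32}\,\|u\|_{L^2}^{1-\frac{3}{2s_0}}\,\||J(t)|^{s_0}u\|_{L^2}^{\frac{3}{2s_0}}.
\]
To pass from the free $J$ to $J_{A}$, I will use the ``almost equivalence'' $\|(-\Delta)^{\frac{s}{2}}f\|_{L^2}\lesssim \|(-\Delta_A)^{\frac s2}f\|_{L^2}+\text{(lower order)}$ for $0\le s<\frac{5}{3}$. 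I expect this to follow from the resolvent representation $(-\Delta_A)^{\frac s2}=c_s\int_0^\infty \lambda^{\frac s2-1}(-\Delta_A)(\lambda-\Delta_A)^{-1}d\lambda$, the smallness of $\delta$, and the decay $\langle x\rangle^{-12}$. The restriction $s_{0}<\frac53$ comes from this equivalence. The lower-order losses should account for the $t^{\varepsilon}$ factor. I would then interpolate so that the decay exponent is $(\varepsilon-\frac32)\frac{3}{2s_0}$, using only a bound on $\||J_A|^{s_0}u\|_{L^2}$ that grows at most like $t^{\varepsilon}$, together with $\|u\|_{L^2}$ conservation. Concretely, I would take the worse of the two interpolation endpoints as the stated rate.

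\textbf{Step 2 (commutation).} Next I check that $J_A$ commutes with the linear flow: $[i\partial_t+\Delta_A,\ |J_A(t)|^{s}]=0$. This follows from the conjugation identity
\[
e^{\frac{i|x|^2}{4t}}\,t^{2}\Delta_A\, e^{-\frac{i|x|^2}{4t}}=(x+2it(\nabla+iA))^2\text{-type expressions}.
\]
Any commutator errors coming from $x\cdot A$ terms are controlled by the weighted resolvent estimates $\|\langle x\rangle^{-\sigma}(\Delta_A-z)^{-1}\langle x\rangle^{-\sigma}\|\lesssim 1$, i.e.\ Kato smoothing. Hence $|J_A|^s e^{it\Delta_A}$ obeys Strichartz estimates with initial data measured by $\|u_0\|_{\Sigma_s}$, since at $t=1$ one has $\||J_A(1)|^{s}u_0\|_{L^2}\lesssim\|u_0\|_{\Sigma_s}$ by the almost equivalence and the weighted terms in the $\Sigma_s$ norm.

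\textbf{Step 3 (bootstrap).} Apply the Duhamel formula and the Strichartz estimates from Step 2. By the fractional Leibniz/chain rule, the nonlinear term satisfies
\[
\||J|^{s}(|u|^{p-1}u)\|\lesssim \|u\|_{L^\infty}^{p-1}\||J|^{s}u\|_{L^2},
\]
which uses gauge invariance: $|J|^s(|u|^{p-1}u)$ only sees $v$. For $p>\frac53$ the exponent $(p-1)\cdot\frac32\cdot\frac{3}{2s_0}$ gives integrable decay, or at worst a $t^{\varepsilon}$ loss after Gronwall. I then run a bootstrap on
\[
M(T)=\sup_{1\le t\le T}\Big(t^{-\varepsilon}\||J_A|^{s_0}u\|_{L^2}+\|u\|_{L^2}\Big)\lesssim \|u_0\|_{\Sigma_s}+M(T)^{p}.
\]
Smallness closes the bootstrap, which gives both global existence and, through Step 1, estimate \eqref{1.4}.

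\textbf{Main obstacle.} The hardest step is the almost equivalence between $(-\Delta_A)^{\frac s2}$ and $(-\Delta)^{\frac s2}$ for non-integer $s$ up to $\frac53$, uniformly enough to survive the conjugation by $e^{\pm\frac{i|x|^2}{4t}}$. My first attempt would be to write the difference through the second resolvent identity $(\lambda-\Delta_A)^{-1}-(\lambda-\Delta)^{-1}=(\lambda-\Delta_A)^{-1}(2iA\cdot\nabla+i\nabla\!\cdot\! A-|A|^2)(\lambda-\Delta)^{-1}$. I would then estimate it with the weighted resolvent bounds, using the $\langle x\rangle^{-12}$ decay to absorb the weights, and treat small $\lambda$ via the absence of a zero resonance.
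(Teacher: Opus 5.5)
Your proof has two genuine gaps: Step 2 rests on a commutation identity that is false, and Step 1 treats the $|J|\leftrightarrow|J_A|$ conversion as lossless when it actually costs a power of $t$.

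\textbf{The commutation in Step 2 is false.} $|J_A(t)|^{s}$ does \emph{not} commute with $i\partial_t+\Delta_A$. Conjugating by $M(\pm t)$ gives (Lemma~\ref{lemma2.1}) $[i\partial_t+\Delta_A,|J_A(t)|^{s}]=it^{s-1}M(t)V(s)M(-t)$, with $V(s)=s(-\Delta_A)^{\frac s2}+[x\cdot\nabla,(-\Delta_A)^{\frac s2}]-i[(-\Delta_A)^{\frac s2},x\cdot A]$.

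For $A=0$ this vanishes only because $(-\Delta)^{\frac s2}$ is homogeneous under dilations, i.e.\ $[x\cdot\nabla,(-\Delta)^{\frac s2}]=-s(-\Delta)^{\frac s2}$. For $\Delta_A$ one has instead $[x\cdot\nabla,-\Delta_A]=2\Delta_A+V_1$, so the error is not merely an ``$x\cdot A$'' term.

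Worse, the error carries the growing prefactor $t^{s-1}$ with $s>\frac32$. Time-uniform weighted resolvent bounds or Kato smoothing cannot absorb a polynomially growing weight on their own; you must use decay of the solution itself. This is the bulk of the paper:
\begin{itemize}
\item the representation $V(s)=c(s)\int_0^\infty\tau^{\frac s2}(\tau-\Delta_A)^{-1}(V_1+iV_2)(\tau-\Delta_A)^{-1}\,d\tau$ (Lemma~\ref{lemma2.4});
\item the bound $\|V(s)f\|_{L^1}\lesssim\|f\|_{\dot H^1}$, obtained by splitting $\tau<1$ and $\tau>1$ (Lemma~\ref{lemma3.2});
\item the feedback $\|M(-t)u\|_{\dot H^1}=t^{-1}\||J|u\|_{L^2}\lesssim t^{\varepsilon-\frac32}(\||J_A|^{\frac32-\varepsilon}u\|_{L^2}+\||J_A|u\|_{L^2})$, which puts $t^{s-1}\cdot t^{\varepsilon-\frac32}$ in $L^4_t(1,\infty)$ (Lemma~\ref{lemma4.3}).
\end{itemize}
Without an estimate of this type, the Duhamel contribution of the commutator is uncontrolled and your bootstrap does not close.

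\textbf{The conversion between $|J|^{s_0}$ and $|J_A|^{s_0}$ is not lower order.} You treat it as an equivalence up to harmless lower-order terms, but for $s>\frac32$ there is no homogeneous comparison. Take $\phi\in C_0^\infty$ with $\phi(0)=1$. Then $\|(-\Delta)^{\frac s2}\phi(\cdot/R)\|_{L^2}=R^{\frac32-s}\|(-\Delta)^{\frac s2}\phi\|_{L^2}\to0$. On the other hand, $\|(-\Delta_A)^{\frac s2}\phi(\cdot/R)\|_{L^2}$ stays bounded below, because $\Delta_A(1)=i\nabla\cdot A-|A|^2\neq0$.

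So any Sobolev correction term has order $\sigma\le\frac32$. After conjugation by $M(\pm t)$ and multiplication by $t^{s_0}$, it becomes $t^{s_0-\sigma}\||J|^{\sigma}u\|_{L^2}$, a loss of at least $t^{s_0-\frac32}$; the paper gets $t^{s_0+\varepsilon-\frac32}$ (Lemma~\ref{lemma4.2}).

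This loss is what produces the rate in \eqref{1.4}. It does not come from an interpolation choice or from growth of $\||J_A|^{s_0}u\|_{L^2}$. The paper keeps $\||J_A|^{s_0}u\|_{L^\infty_tL^2_x}\lesssim\|u_0\|_{\Sigma_{s_0}}$ bounded and gets $\|u\|_{L^\infty}\lesssim t^{-\frac32}(t^{s_0+\varepsilon-\frac32})^{\frac{3}{2s_0}}=t^{(\varepsilon-\frac32)\frac{3}{2s_0}}$.

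The same loss enters the nonlinear term twice ($J_A\to J\to J_A$). You therefore need $\int_1^\infty t^{2s_0+2\varepsilon-3-\frac{3(p-1)}{2s_0}(\frac32-\varepsilon)}\,dt<\infty$, which for $p$ near $\frac53$ forces $s_0$ near $\frac32$. Consequently:
\begin{itemize}
\item your claim that $p>\frac53$ makes $(p-1)\cdot\frac32\cdot\frac{3}{2s_0}>1$ is false; this needs $p>1+\frac{4s_0}{9}$;
\item Gronwall with a non-integrable rate $t^{-a}$, $a<1$, gives stretched-exponential growth, not a $t^{\varepsilon}$ loss.
\end{itemize}
Finally, the restriction $s_0<\frac53$ comes from the fractional chain rule for $|u|^{p-1}u$, not from the equivalence, which the paper proves for all $0<s<2$.
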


\begin{remark}
In Theorem \ref{theorem1.1}, letting $s_{0}\rightarrow\frac{3}{2}$, we obtain that $(-\frac{3}{2}+\varepsilon)\cdot\frac{3}{2s_{0}}\rightarrow-\frac{3}{2}+\varepsilon$, that is to say, we have \textquotedblleft almost optimal decay\textquotedblright.
\end{remark}

For the well-posedness of weak solutions of the nonlinear Schr\"odinger equation, it was obtained for the case $A\neq0$ in \cite{K1,K2,T1}, and for the case $A=0$ in  \cite{Bo,Na}. For the local well-posedness of strong solutions of the same equation, it was acquired in \cite{K2,T2} when $A=0$, and in \cite{NS} when $A\neq0$. In Appendix A of this paper, we obtain that the initial value problem of the nonlinear magnetic Schr\"odinger equation is globally well-posed in  $L^{\infty}_{t,x}((1,\infty)\times\mathbb{R}^{3})$.

For the free linear Schr\"{o}dinger equation, the sharp decay result is the dispersive estimate: $\|e^{-it\Delta}f\|_{L^{\infty}(\mathbb{R}^{n})}\leqslant C|t|^{-\frac{n}{2}}\|f\|_{L^{1}(\mathbb{R}^{n})}, \ t\in \mathbb{R}$. For the nonlinear Schr\"{o}dinger equation, the dispersive estimate depends on the range of $p$ of the nonlinear term $|u|^{p-1}u$. The case $1+\frac{4}{n}\leqslant p<1+\frac{4}{n-1}$ for $n\geqslant3$ was answered positively by Strauss \cite{St3,St2}, who proved that the zero solution is the only asymptotically free solution when $1<p\leqslant1+\frac{2}{n}$ for $n\geqslant2$ and when $1<p\leqslant2$ for $n=1$ \cite{St1}. Using the idea of Glassey \cite{Gl}, this result was extended to the case $1<p\leqslant3$ for $n=1$ by Barab \cite{Ba}. When  $1+\frac{2}{n}<p<1+\frac{4}{n}$ for $n\geqslant1$, this estimate was proved by Mckean and Shatah \cite{MS}.

For the linear Schr\"{o}dinger equation with a real-valued potential, if $0$ was neither an eigenvalue nor a resonance for the Schr\"{o}dinger operator $\Delta_{V}:=\Delta-V(x)$, Journ\'{e}, Soffer and Sogge \cite{JSS} showed that
\begin{equation}\label{1.5}
\|e^{-it\Delta_{V}}\mathscr{P}_{ac}f\|_{L^{\infty}}\leqslant C|t|^{-\frac{n}{2}}\|f\|_{L^{1}},\ \forall f\in\mathscr{S}(\mathbb{R}^{n}),\ n\geqslant3,
\end{equation}
where $\mathscr{P}_{ac}$ was the projection onto the absolute continuous spectrum of $\Delta_{V}$, and the potential $V(x)$ satisfied $|V(x)|\leqslant C\langle x \rangle^{-(n+4)-}$ and $\ \mathscr{F}(V)\in L^{1}$. For $n=3$, the dispersive estimate \eqref{1.5} was obtained in turn, when the potential $V(x)$ satisfied $|V(x)|\leqslant C(1+|x|)^{-3-}$ by Goldberg and Schlag \cite{GS}, when $V(x)$ satisfied $\int_{\mathbb{R}^{6}}\frac{|V(x)|\ |V(y)|}{|x-y|^{2}}dxdy<(4\pi)^{2}$ and $\mathop{sup}\limits_{x\in \mathbb{R}^{3}}\int_{\mathbb{R}^{3}}\frac{|V(y)|}{|x-y|}dy<4\pi$ by Rodnianski and Schlag \cite{RS1}, and when $V(x)$ satisfied $V(x)\in L^{p}\cap L^{q}$ ($p<\frac{3}{2}<q$) and $|V(x)|\leqslant C(1+|x|)^{-2-}$ by Goldberg \cite{Go2}. For $n\neq3$, this estimate was proved, see \cite{CCV,FY,Go1, Sc,Ya1,Ya2}.

For the one-dimension nonlinear Schr\"{o}dinger equation  with the potential $V(x)$ and nonlinearity term $|u|^{p-1}u$ ($p>3$), the dispersive result was obtained  when $V(x)$ is a real-valued Schwartz function and $\sigma(-\Delta_{V})=[0,\infty)$ in \cite{CGV}. The argument in \cite{CGV} was based on the distorted Fourier transforms (see \cite{H}) and the equivalence property  $\||J|^{s}u\|_{L^{2}}\approx\||J_{V}|^{s}u\|_{L^{2}}$ ($0\leqslant s<\frac{1}{2}$). The proof of this equivalence depended on the properties of the Jost solutions of $-\Delta_{V}u=\zeta^{2}u$ in $\mathbb{R}$, the transmission coefficient $T(\zeta)$ and the reflection coefficients $R_{\pm}(\zeta)$ of the scattering matrix \cite{DT}. When $n\geqslant3$, a decay result $\|u\|_{L^{2p}_{x}}\leqslant Ct^{-\gamma}$ (where $1+\frac{2}{n}<p\leqslant1+\frac{4}{n-2}$ and $\gamma<\frac{n}{2}(1-\frac{1}{p})$) was proved with $V(x)$ satisfied some conditions as in \cite{LZ}. In this paper, our situation is more complicated than in \cite{CGV} for the reason that we are short of an explicit expression of the scattering matrix in the high dimensional space. Another difficulty is that the operator $|J_{A}(t)|^{s}$ does not have as good properties as $|J_{V}(t)|^{s}$ in \cite{CGV,LZ}.

For the magnetic Schr\"{o}dinger equation, it is has been extensively studied by many mathematicians and physicists almost since its advent. For example, Erdo\v{g}an, Goldberg and Schlag \cite{EGS} proved the Strichartz and smoothing estimates for Schr\"{o}dinger operators with almost critical magnetic potentials in three and higher dimensions. D'Ancona, Fanelli, Vega and Visciglia \cite{DFVV} obtained the endpoint Strichartz  estimates for the magnetic Schr\"{o}dinger equation. There have other results in \cite{DF,GST}. The dispersive estimate $L^{1}\rightarrow L^{\infty}$  with the most optimal decay rate $t^{-\frac{n}{2}}$ for magnetic Schr\"{o}dinger operators is still an interesting problem and has been studied by many scholars. The short-time dispersive estimate of solutions was proved as follows (Theorem 4 in \cite{Ya1})
\begin{equation}\label{1.6}
\|e^{it(-\Delta_{A}+V(x))}f\|_{L^{\infty}}\lesssim|t|^{-\frac{n}{2}}\|f\|_{L^{1}},\ t\leqslant T\ll1,\ n\geqslant3.
\end{equation}
Komech and Kopylova gave the following decay estimate in weighted norms for $n=3$
\begin{equation}\label{1.7}
\|e^{it(-\Delta_{A}+V(x))}\mathscr{P}_{c}f\|_{L^{2}_{-\sigma}}\leqslant C\langle t \rangle^{-\frac{3}{2}}\|f\|_{L^{2}_{\sigma}},\ t\in \mathbb{R},
\end{equation}
where $\sigma>\frac{5}{2}$ and $\mathscr{P}_{c}$ was a projection onto the continuous spectrum of operator $-\Delta_{A}+V(x)$, see \cite{KK}. In this paper, we consider the nonlinear magnetic Schr\"{o}dinger equation and obtain the dispersive decay estimate.

$Outline\ of\ this\ paper:$

In Section 2, we construct an operator $|J_{A}(t)|^{s}:=M(t)(-t^{2}\Delta_{A})^{\frac{s}{2}}M(-t)$ from the same form as the free operator $|J(t)|^{s}:=M(t)(-t^{2}\Delta)^{\frac{s}{2}}M(-t)$, where $M(t)$ $:=e^{\frac{i|x|^{2}}{4t}}$ (see \cite{C} for a detailed introduction), and have $V(s)=s(-\Delta_{A})^{\frac{s}{2}}+[x\cdot \nabla,(-\Delta_{A})^{\frac{s}{2}}]-i[(-\Delta_{A})^{\frac{s}{2}},x\cdot A]$ by commuting between $|J_{A}|^{s}$ and $i\partial_{t}+\Delta_{A}$. Next, we obtain an integral expression of $V(s)$ (see Lemma \ref{lemma2.4}) by applying the integral expression of $(-\Delta_{A})^{\frac{s}{2}}$.

In the most significant part-Section 3, we mainly study the resolvent of the magnetic Schr\"{o}dinger operator $\Delta_{A}$ in order to obtain $L^{r}_{x}(\mathbb{R}^{3})$-estimates ($1\leqslant r\leqslant2$) of $V(s)$. We know that Cuenin and Kenig \cite{CK} proved the half-order derivative estimates and $L^{q'}(\mathbb{R}^{n})\rightarrow L^{q}(\mathbb{R}^{n})$ ($q\in[2,\frac{2n}{n-2}]$) estimates of the  magnetic resolvent in $n\geqslant3$. The difficulty lies in proving the first-order derivative estimates of the magnetic resolvent and acquiring the power of $\tau$ in the estimates of the resolvent $R_{A}:=(\tau-\Delta_{A})^{-1}$. To do this, on the one hand, applying the resolvent identity and the limiting absorption principle, we prove some estimates about the free resolvent with weight functions. On the other hand, we discuss the resolvent operator respectively on the \textquotedblleft low energy component\textquotedblright and \textquotedblleft high energy component\textquotedblright to obtain different powers of $\tau$. For the \textquotedblleft low energy component\textquotedblright, we show that if the potential $A(x)$ satisfies smallness condition $(\textbf H)$, then the power of $\tau$ can be reduced and some remainder terms from commutators can be well handled.

In Section 4, we prove Theorem \ref{theorem1.1}. The proof comprises two parts by applying the Strichartz estimates of the solution $|J_{A}(t)|^{s}u$ of the nonlinear Schr\"odinger equation \eqref{4.18}. On the one hand, the estimate of the term with respect to $V(s)$ has been dealt with in Section 3. On the other hand, we have to estimate the term $|J_{A}|^{s}(|u|^{p-1}u)$. For this purpose, we first prove an \textquotedblleft almost equivalence\textquotedblright between $(-\Delta)^{\frac{s}{2}}$ and $(-\Delta_{A})^{\frac{s}{2}}$, therefore the relationship between $|J|^{s}$ and $|J_{A}|^{s}$ is obtained. Next, applying the iterative method, we obtain the estimate of the term $|J_{A}|^{s}(|u|^{p-1}u)$. Combining the results of these two parts, we complete the Strichartz estimate of $|J_{A}(t)|^{s}u$. However, in order to obtain  decay result, we also need the following inequality (see Lemma \ref{lemma2.5})
\begin{equation*}
\|u\|_{L^{\infty}_{x}(\mathbb{R}^{3})}\leqslant C\|u\|_{L^{2}_{x}(\mathbb{R}^{3})}^{1-\frac{3}{2s}}\cdot\|u\|_{\dot{H}^{s}(\mathbb{R}^{3})}^{\frac{3}{2s}},\ \forall s>\frac{3}{2}.
\end{equation*}
Next, applying again the relationship between $|J|^{s}$ and $|J_{A}|^{s}$ and the Strichartz estimate of $|J_{A}(t)|^{s}u$, the dispersive decay estimate is obtained, that is to say, the proof of Theorem \ref{theorem1.1} is completed.

\bigskip
\hspace{-5mm}\textbf{Notation}:
\begin{itemize}
  \item $\mathscr{S}(\mathbb{R}^{n})$ is a Schwartz space, i.e. the set of all real- or complex-valued $C^{\infty}$ functions on $\mathbb{R}^{n}$ such that for ever nonnegative integer $m$ and every multi-index $\alpha$, $\sup\limits_{x\in\mathbb{R}^{n}}(1+|x|^{2})^{\frac{m}{2}}|D^{\alpha}u(x)|<\infty$.

  \item $\dot{H}^{s}(\mathbb{R}^{n})=\{u\in L^{2}(\mathbb{R}^{n}):|\xi|^{s}\cdot\widehat{u}\in L^{2}(\mathbb{R}^{n})\}.$
  \item
  $L^{2}_{\sigma}(\mathbb{R}^{n})=L^{2,\sigma}(\mathbb{R}^{n})=\{u(x):(1+|x|^{2})^{\frac{\sigma}{2}}u(x)\in L^{2}(\mathbb{R}^{n})\};$  $H^{m,\sigma}(\mathbb{R}^{n})=\{u(x):D^{\alpha}u\in L^{2}_{\sigma}(\mathbb{R}^{n}), 0\leqslant |\alpha|\leqslant m\}.$

  \item $\mathscr{F}u(x)=(2\pi)^{-\frac{n}{2}}\int_{\mathbb{R}^{n}}e^{-ix\cdot \xi}u(\xi)d\xi$, $(\mathscr{F}^{-1}u)(x)=(2\pi)^{-\frac{n}{2}}\int_{\mathbb{R}^{n}}e^{ix\cdot \xi}u(\xi)d\xi$.

  \item $[A,B]:=AB-BA$.

  \item $A\lesssim B$ means that there exists a constant $C>0$ such that $A\leqslant CB$.

\item $A\approx B$ means that there exists a constant $C>0$ such that $C^{-1}\leqslant \dfrac{B}{A}\leqslant C$.
\end{itemize}

\section{\bf Preliminaries}
In this section, first we introduce the dilation operator and the multiplier operator for every $t>0$
\begin{equation}\label{2.1}
D(t)f(x)=(2it)^{-\frac{3}{2}}f(\frac{x}{2t}),\end{equation}
\begin{equation}\label{2.2}
M(t)f(x)=e^{\frac{i|x|^{2}}{4t}}f(x),
\end{equation}
then
\begin{equation}\label{2.3}
e^{it\Delta}=M(t)D(t)\mathscr{F}^{-1}M(t).
\end{equation}
Furthermore, we have
\begin{equation}\label{2.4}
e^{it\Delta}x_{j}e^{-it\Delta}=M(t)2ti\partial x_{j}M(-t).
\end{equation}
Therefore, we introduce the operators
\begin{equation}\label{2.5}
J_{j}=M(t)2ti\partial x_{j}M(-t)=2tie^{\frac{i|x|^{2}}{4t}}\partial x_{j}e^{-\frac{i|x|^{2}}{4t}}=2ti\partial x_{j}+x_{j},
\end{equation}
and we have
$$[i\partial_{t}+\Delta,J_{j}]=0,$$
where $j=1,2,3$.\\
By direct calculation, it follows that
\begin{flalign}
&[i\partial_{t}+\Delta,M(t)]=M(t)(\frac{3i}{2t}+\frac{ix\cdot \nabla}{t}),&\nonumber\\ &[i\partial_{t}+\Delta,M(-t)]=M(-t)(-\frac{3i}{2t}-\frac{ix\cdot \nabla}{t}-\frac{|x|^{2}}{2t^{2}});&\label{2.6}
\end{flalign}
\begin{flalign}
&[i\nabla\cdot A,M(t)]=[iA\cdot\nabla,M(t)]=-\frac{x\cdot A}{2t}M(t),\ [|A|^{2},M(t)]=0,&\nonumber\\ &[i\partial_{t}+\Delta_{A},(-t^{2}\Delta_{A})^{\frac{s}{2}}]=\frac{is}{t}(-t^{2}\Delta_{A})^{\frac{s}{2}}.&\label{2.7}
\end{flalign}
Applying \eqref{2.6} and \eqref{2.7}, we obtain that
\begin{equation}\label{2.8}
[i\partial_{t}+\Delta_{A},M(t)]
=\frac{|x|^{2}}{4t^{2}}M(t)+M(t)(\frac{3i}{2t}-\frac{|x|^{2}}{4t^{2}}+\frac{ix\cdot \nabla}{t})-\frac{x\cdot A}{t}M(t).
\end{equation}
Similarly,
\begin{equation}\label{2.9}
[i\partial_{t}+\Delta_{A},M(-t)]=-\frac{|x|^{2}}{4t^{2}}M(-t)+M(-t)(-\frac{3i}{2t}-\frac{|x|^{2}}{4t^{2}}-\frac{ix\cdot \nabla}{t})+\frac{x\cdot A}{t}M(-t).
\end{equation}
Next we introduce the following two operators for any $s\geqslant0$
\begin{equation}\label{2.10}
|J(t)|^{s}:=M(t)(-t^{2}\Delta)^{\frac{s}{2}}M(-t),
\end{equation}
\begin{equation}\label{2.11} |J_{A}(t)|^{s}:=M(t)(-t^{2}\Delta_{A})^{\frac{s}{2}}M(-t).
\end{equation}
\begin{lemma}\label{lemma2.1}
For any $s\geqslant 0$ and $t>0$, we have
$$[i\partial_{t}+\Delta_{A},|J_{A}(t)|^{s}]=it^{s-1}M(t)V(s)M(-t),
$$
where \begin{equation}\label{2.12}
V(s)=s(-\Delta_{A})^{\frac{s}{2}}+[x\cdot \nabla,(-\Delta_{A})^{\frac{s}{2}}]-i[(-\Delta_{A})^{\frac{s}{2}},x\cdot A].
\end{equation}
\end{lemma}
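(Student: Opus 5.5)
The plan is to avoid expanding the commutator with \eqref{2.8} and \eqref{2.9} term by term. Instead I will conjugate the operator $L:=i\partial_{t}+\Delta_{A}$ by the multiplier $M(t)$. Since $M(t)M(-t)=I$, for any time-dependent operator $T(t)$ we have
$$[L,\,M(t)T(t)M(-t)]=M(t)\,[\,M(-t)LM(t),\,T(t)\,]\,M(-t).$$
I will apply this with $T(t)=(-t^{2}\Delta_{A})^{\frac{s}{2}}=t^{s}(-\Delta_{A})^{\frac{s}{2}}$. Here $t^{s}$ is a scalar and $(-\Delta_{A})^{\frac s2}$ is defined by the functional calculus of the self-adjoint operator $-\Delta_{A}\geqslant0$. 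It therefore suffices to compute $M(-t)LM(t)$ explicitly.

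\textbf{Step 1: conjugating $L$.} Write $D:=\nabla+iA$. A direct computation gives
$$M(-t)\,\partial_{t}\,M(t)=\partial_{t}-\frac{i|x|^{2}}{4t^{2}},\qquad M(-t)\,D\,M(t)=D+\frac{ix}{2t}.$$
Squaring the second identity and using $D\cdot(xf)=3f+x\cdot Df$, I get
$$M(-t)\Delta_{A}M(t)=\Delta_{A}+\frac{i}{t}\,x\cdot D+\frac{3i}{2t}-\frac{|x|^{2}}{4t^{2}}.$$
The term $-\frac{|x|^{2}}{4t^{2}}$ cancels against $i\cdot(-\frac{i|x|^{2}}{4t^{2}})=\frac{|x|^{2}}{4t^{2}}$ coming from $i\partial_{t}$. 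Hence
$$M(-t)LM(t)=L+\frac{1}{t}\Big(i\,x\cdot\nabla-x\cdot A+\frac{3i}{2}\Big).$$
This identity is consistent with \eqref{2.8}, and it could alternatively be read off from \eqref{2.8}.

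\textbf{Step 2: commuting with $T(t)$.} The scalar $\frac{3i}{2t}$ commutes with $T(t)$. For the $L$ part I use the last identity in \eqref{2.7}, which says $[L,T(t)]=\frac{is}{t}T(t)=is\,t^{s-1}(-\Delta_{A})^{\frac s2}$. This holds because $\Delta_{A}$ commutes with its own functional calculus and $i\partial_{t}(t^{s})=is\,t^{s-1}$. The remaining terms give
$$\frac{t^{s}}{t}\Big(i[x\cdot\nabla,(-\Delta_{A})^{\frac s2}]-[x\cdot A,(-\Delta_{A})^{\frac s2}]\Big)=i\,t^{s-1}\Big([x\cdot\nabla,(-\Delta_{A})^{\frac s2}]-i[(-\Delta_{A})^{\frac s2},x\cdot A]\Big).$$
Summing the two contributions and conjugating back by $M(\pm t)$ gives exactly $it^{s-1}M(t)V(s)M(-t)$ with $V(s)$ as in \eqref{2.12}.

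\textbf{Main obstacle.} The algebra itself is routine. The point needing care is justifying the manipulations for the nonlocal operator $(-\Delta_{A})^{\frac s2}$, whose commutators with the unbounded operators $x\cdot\nabla$ and $x\cdot A$ must make sense. I would first verify the identity as operators acting on a dense class such as $\mathscr{S}$ (or $C_{0}^{\infty}$ in $x$, smooth in $t$), interpreting each commutator in the weak/quadratic-form sense. The explicit meaning of these commutators is supplied later by the integral representation in Lemma \ref{lemma2.4}. I would also make sure to use only that $[\partial_{t},(-\Delta_{A})^{\frac s2}]=0$, which holds because $A$ is time-independent.
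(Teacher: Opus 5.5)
Your argument is correct. The identity $M(-t)(i\partial_t+\Delta_A)M(t)=i\partial_t+\Delta_A+\frac{1}{t}\bigl(ix\cdot\nabla-x\cdot A+\frac{3i}{2}\bigr)$ checks out; it is \eqref{2.8} once you note that the two $\frac{|x|^2}{4t^2}$ terms there cancel. Commuting it with $t^{s}(-\Delta_A)^{\frac s2}$ gives exactly $it^{s-1}V(s)$.

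The paper does the same algebra with different bookkeeping. It expands $[L,M(t)TM(-t)]$ by the Leibniz rule into three terms and inserts \eqref{2.8}, \eqref{2.7} and \eqref{2.9}. It then cancels the $\pm\frac{3i}{2t}$ contributions and groups the two $x\cdot\nabla$ terms into $\frac{i}{t}M(t)[x\cdot\nabla,TM(-t)]$. Finally it has to cancel the leftover $-M(t)TM(-t)\frac{|x|^2}{2t^2}$ coming from \eqref{2.9} against $\frac{i}{t}M(t)T[x\cdot\nabla,M(-t)]=\frac{1}{2t^2}M(t)T|x|^2M(-t)$.

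Your conjugation identity $[L,M(t)TM(-t)]=M(t)[M(-t)LM(t),T]M(-t)$ is more economical. It needs only one multiplier computation and never uses \eqref{2.9}. It performs the quadratic-weight cancellation once, inside $M(-t)LM(t)$. It also makes visible that $it^{s-1}V(s)$ is simply the commutator of $t^{s}(-\Delta_A)^{\frac s2}$ with the conjugated operator. The paper's version only has the minor advantage of reusing the commutator list \eqref{2.6}--\eqref{2.9} verbatim.

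Your remark on rigor goes beyond the paper, whose proof is purely formal. Be aware that $(-\Delta_A)^{\frac s2}$ does not map $\mathscr{S}$ into itself. So the commutators with $x\cdot\nabla$ and $x\cdot A$ genuinely need the weak interpretation you mention, or the representation of Lemma \ref{lemma2.4}.
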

\begin{proof}
Applying \eqref{2.8}-\eqref{2.11}, it follows that
\begin{equation*}
\begin{split}
&[i\partial_{t}+\Delta_{A},|J_{A}(t)|^{s}]\\
=&[i\partial_{t}+\Delta_{A},M(t)(-t^{2}\Delta_{A})^{\frac{s}{2}}M(-t)]\\
=&[i\partial_{t}+\Delta_{A},M(t)](-t^{2}\Delta_{A})^{\frac{s}{2}}M(-t)+M(t)[i\partial_{t}
+\Delta_{A},(-t^{2}\Delta_{A})^{\frac{s}{2}}M(-t)]\\
=&[i\partial_{t}+\Delta_{A},M(t)](-t^{2}\Delta_{A})^{\frac{s}{2}}M(-t)
+M(t)[i\partial_{t}+\Delta_{A},(-t^{2}\Delta_{A})^{\frac{s}{2}}]M(-t)\\
&+M(t)(-t^{2}\Delta_{A})^{\frac{s}{2}}
[i\partial_{t}+\Delta_{A},M(-t)]\\
=&(\frac{|x|^{2}}{4t^{2}}M(t)+M(t)(\frac{3i}{2t}-\frac{|x|^{2}}{4t^{2}}+\frac{ix\cdot \nabla}{t})-\frac{x\cdot A}{t}M(t))(-t^{2}\Delta_{A})^{\frac{s}{2}}M(-t)
+\frac{is}{t}|J_{A}(t)|^{s}\\
&+M(t)(-t^{2}\Delta_{A})^{\frac{s}{2}}(-\frac{|x|^{2}}{4t^{2}}M(-t)+M(-t)(-\frac{3i}{2t}
-\frac{|x|^{2}}{4t^{2}}-\frac{ix\cdot \nabla}{t})+\frac{x\cdot A}{t}M(-t))\\
\end{split}
\end{equation*}
\begin{equation*}
\begin{split}
=&\frac{3i}{2t}|J_{A}(t)|^{s}+\frac{i}{t}M(t)x\cdot \nabla(-t^{2}\Delta_{A})^{\frac{s}{2}}M(-t)-\frac{x\cdot A}{t}M(t)(-t^{2}\Delta_{A})^{\frac{s}{2}}M(-t)\\
&+\frac{is}{t}|J_{A}(t)|^{s}-\frac{3i}{2t}|J_{A}(t)|^{s}-\frac{i}{t}M(t)(-t^{2}\Delta_{A})^{\frac{s}{2}}M(-t)x\cdot\nabla\\
&-M(t)(-t^{2}\Delta_{A})^{\frac{s}{2}}M(-t)\frac{|x|^{2}}{2t^{2}}+M(t)(-t^{2}\Delta_{A})^{\frac{s}{2}}\frac{x\cdot A}{t}M(-t)\\
\end{split}
\end{equation*}
\begin{equation*}
\begin{split}
=&\frac{is}{t}|J_{A}(t)|^{s}+\frac{i}{t}M(t)[x\cdot \nabla,(-t^{2}\Delta_{A})^{\frac{s}{2}}M(-t)]-M(t)(-t^{2}\Delta_{A})^{\frac{s}{2}}M(-t)\frac{|x|^{2}}{2t^{2}}\\
&+M(t)[(-t^{2}\Delta_{A})^{\frac{s}{2}},\frac{x\cdot A}{t}]M(-t)\\=&\frac{is}{t}|J_{A}(t)|^{s}+\frac{i}{t}M(t)([x\cdot \nabla,(-t^{2}\Delta_{A})^{\frac{s}{2}}]M(-t)+(-t^{2}\Delta_{A})^{\frac{s}{2}}[x\cdot \nabla,M(-t)])\\
&-M(t)(-t^{2}\Delta_{A})^{\frac{s}{2}}M(-t)\frac{|x|^{2}}{2t^{2}}
+M(t)[(-t^{2}\Delta_{A})^{\frac{s}{2}},\frac{x\cdot A}{t}]M(-t)\\
=&\frac{is}{t}|J_{A}(t)|^{s}+\frac{i}{t}M(t)[x\cdot \nabla,(-t^{2}\Delta_{A})^{\frac{s}{2}}]M(-t)+M(t)[(-t^{2}\Delta_{A})^{\frac{s}{2}},\frac{x\cdot A}{t}]M(-t)\\
=&it^{s-1}(sM(t)(-\Delta_{A})^{\frac{s}{2}}M(-t)+M(t)[x\cdot \nabla,(-\Delta_{A})^{\frac{s}{2}}]M(-t)\\& -iM(t)[(-\Delta_{A})^{\frac{s}{2}},x\cdot A]M(-t))\\
=&it^{s-1}M(t)V(s)M(-t).
\end{split}
\end{equation*}
This completes the proof of Lemma \ref{lemma2.1}.
\end{proof}
For operator $ V(s)$, we will introduce it in detail by showing an integral representation formula. To do this, we need the following lemmas:
\begin{lemma}\label{lemma2.2}
Let $A(x)$ satisfy hypothesis $(\textbf H)$, and
\begin{equation}\label{2.13}
V_{1}:=(2+x\cdot\nabla)|A|^{2}-2i\nabla\cdot A-2ix\cdot(\nabla A)\cdot\nabla-ix\cdot(\nabla\nabla A),
\end{equation}
where $x\cdot(\nabla A)\cdot\nabla u=\sum\limits_{k,j=1}^{3}(x_{k}(\partial_{x_{k}}A_{k})\partial_{x_{k}}u+x_{k}(\partial_{x_{k}}A_{j})\partial_{x_{j}}u)$, and $x\cdot(\nabla\nabla A)u=\sum\limits_{k,j=1}^{3}(x_{k}(\partial^{2}_{x_{k}x_{k}}A_{k})u+x_{k}(\partial^{2}_{x_{k}x_{j}}A_{j})u)$. Then for any $\tau>0$, we have
$$
[x\cdot\nabla,(-\Delta_{A})(\tau-\Delta_{A})^{-1}]=2\tau\Delta_{A}(\tau-\Delta_{A})^{-2}+\tau(\tau-\Delta_{A})^{-1}V_{1}(\tau-\Delta_{A})^{-1}.
$$
\end{lemma}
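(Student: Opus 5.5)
Write $R:=(\tau-\Delta_{A})^{-1}$. The plan is to reduce the commutator with $(-\Delta_A)R$ to a commutator with $\Delta_A$ itself, and then compute that commutator explicitly by expanding $\Delta_A$ into the free Laplacian plus lower-order magnetic terms.

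\emph{Step 1 (reduction).} Since $-\Delta_{A}R=I-\tau R$ and the identity commutes with everything,
\[
[x\cdot\nabla,(-\Delta_{A})R]=-\tau[x\cdot\nabla,R].
\]
For an invertible operator we have $[B,R]=R[\tau-\Delta_{A},B]R=-R[\Delta_{A},B]R$, valid on $\mathscr S$ and extended by density. Hence
\[
[x\cdot\nabla,(-\Delta_{A})R]=\tau R[\Delta_{A},x\cdot\nabla]R .
\]
It therefore suffices to show
\[
[\Delta_{A},x\cdot\nabla]=2\Delta_{A}+V_{1}.
\]
Granting this, $\tau R(2\Delta_{A})R=2\tau\Delta_{A}R^{2}$, because $\Delta_A$ commutes with $R$. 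The remaining piece is exactly $\tau RV_{1}R$, which gives the lemma.

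\emph{Step 2 (the commutator).} Expand
\[
\Delta_{A}=\Delta+2iA\cdot\nabla+i(\nabla\cdot A)-|A|^{2}.
\]
The free part gives the classical $[\Delta,x\cdot\nabla]=2\Delta$. For a multiplication operator $f$ we have $[f,x\cdot\nabla]=-(x\cdot\nabla f)$. This gives two contributions:
\begin{itemize}
\item from $-|A|^{2}$: the term $x\cdot\nabla|A|^{2}$;
\item from $i\nabla\cdot A$: the term $-ix\cdot\nabla(\nabla\cdot A)$.
\end{itemize}
For the first-order term we use
\[
[A_{j}\partial_{j},x_{k}\partial_{k}]=A_{j}\partial_{j}-x_{k}(\partial_{k}A_{j})\partial_{j},
\]
so that
\[
[2iA\cdot\nabla,x\cdot\nabla]=2iA\cdot\nabla-2ix\cdot(\nabla A)\cdot\nabla .
\]
Now rewrite $2\Delta$ as $2\Delta_{A}$ minus $2(2iA\cdot\nabla+i\nabla\cdot A-|A|^{2})$. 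The $2iA\cdot\nabla$ from the commutator combines with $-4iA\cdot\nabla$ to leave $-2iA\cdot\nabla$. Collecting everything, we expect
\[
[\Delta_{A},x\cdot\nabla]=2\Delta_{A}+(2+x\cdot\nabla)|A|^{2}-2i\nabla\cdot A-2ix\cdot(\nabla A)\cdot\nabla-ix\cdot(\nabla\nabla A)+(\text{leftover }-2iA\cdot\nabla).
\]

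\emph{Main obstacle.} The hard step is matching this against \eqref{2.13}. The paper's definitions of $x\cdot(\nabla A)\cdot\nabla$ and $x\cdot(\nabla\nabla A)$ contain diagonal extra terms and no separate $A\cdot\nabla$ term. So the bookkeeping has to be reconciled with the authors' convention, presumably by interpreting those sums so that they absorb the leftover $-2iA\cdot\nabla$ and the $\nabla(\nabla\cdot A)$ contribution.

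I would first verify the identity on $\mathscr S$ by direct index computation, tracking each term. Then I would check that the stated $V_{1}$ agrees with it up to the authors' notation. If it does not, I would state the corrected $V_{1}$; only its form as a first-order operator with coefficients bounded by $\delta\langle x\rangle^{-11}$, as guaranteed by $(\textbf{H})$, matters later.

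Finally, I would justify the domain issues: $R$ maps $\mathscr S$-type weighted spaces suitably, by ellipticity of $\Delta_A$ and the $C^{2}$ decay of $A$. This lets the formal commutator manipulations be extended by density.
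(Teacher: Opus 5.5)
Your argument is essentially the paper's. Both proofs reduce the lemma to the single identity $[x\cdot\nabla,-\Delta_A]=[\Delta_A,x\cdot\nabla]=2\Delta_A+V_1$ and then finish with resolvent algebra. You use $-\Delta_A R=I-\tau R$ up front to get $\tau R[\Delta_A,x\cdot\nabla]R$; the paper expands $[x\cdot\nabla,-\Delta_A]R-\Delta_A[x\cdot\nabla,R]$ and only at the end uses $I+\Delta_A R=\tau R$. Your term-by-term computation of the commutator is correct.

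The one loose end is your ``main obstacle'', and the reconciliation you guess cannot work. Every term in the sums defining $x\cdot(\nabla A)\cdot\nabla$ and $x\cdot(\nabla\nabla A)$ carries a factor $x_k$, so they cannot absorb the $x$-free leftover $-2iA\cdot\nabla$.

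What absorbs it is the term $-2i\nabla\cdot A$ in $V_1$. In the paper $\nabla\cdot A$ denotes the operator $u\mapsto\nabla\cdot(Au)=(\mathrm{div}\,A)u+A\cdot\nabla u$. This is how it arises from $\Delta_A=(\nabla+iA)\cdot(\nabla+iA)$: the paper's own expansion is $-\Delta_A=-\Delta-iA\cdot\nabla-i\nabla\cdot A+|A|^2$, and the same convention appears in its resolvent identity and in $V_3$. Hence $-2i\nabla\cdot A=-2i(\mathrm{div}\,A)-2iA\cdot\nabla$, which is exactly your $-2i(\mathrm{div}\,A)$ plus your leftover.

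The diagonal terms in the displayed definitions are a notational slip. Read literally, they count the $j=k$ terms several times, which the commutator does not produce. The intended meanings are $x\cdot(\nabla A)\cdot\nabla=\sum_{j,k}x_k(\partial_kA_j)\partial_j$ and $x\cdot(\nabla\nabla A)=x\cdot\nabla(\mathrm{div}\,A)$, which is precisely what your computation gives. With these readings your $[\Delta_A,x\cdot\nabla]$ coincides with $2\Delta_A+V_1$. No corrected $V_1$ is needed, and your proof establishes the lemma as stated.
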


\begin{proof}
Let us calculate that
\begin{equation*}
\begin{split}
&[x\cdot\nabla,-\Delta_{A}]\\
=&[x\cdot\nabla,-\Delta-iA\cdot\nabla-i\nabla\cdot A+|A|^{2}]\\
=&[x\cdot\nabla,-\Delta]-i[x\cdot\nabla,A\cdot\nabla]-i[x\cdot\nabla,\nabla\cdot A]+[x\cdot\nabla,|A|^{2}]\\
=&2\Delta-2ix\cdot(\nabla  A)\cdot\nabla+2iA\cdot\nabla-ix\cdot(\nabla\nabla A)+x\cdot(\nabla|A|^{2})\\
=&2\Delta_{A}-2i\nabla\cdot A+(2+x\cdot\nabla)|A|^{2}-2ix\cdot(\nabla  A)\cdot\nabla-ix\cdot(\nabla\nabla A),
\end{split}
\end{equation*}
therefore, according to the notation of $V_{1}$, it follows that
\begin{equation}\label{2.14}
[x\cdot\nabla,-\Delta_{A}]=2\Delta_{A}+V_{1}.
\end{equation}
Consequently, we obtain from \eqref{2.14} that
\begin{flalign}
&[x\cdot\nabla,(-\Delta_{A})(\tau-\Delta_{A})^{-1}]\nonumber\\
=&[x\cdot\nabla,-\Delta_{A}](\tau-\Delta_{A})^{-1}-\Delta_{A}[x\cdot\nabla,(\tau-\Delta_{A})^{-1}]\nonumber\\
=&[x\cdot\nabla,-\Delta_{A}](\tau-\Delta_{A})^{-1}+\Delta_{A}(\tau-\Delta_{A})^{-1}[x\cdot\nabla,-\Delta_{A}](\tau-\Delta_{A})^{-1}\nonumber\\
=&2\Delta_{A}(\tau-\Delta_{A})^{-1}+V_{1}(\tau-\Delta_{A})^{-1}+2\Delta_{A}^{2}(\tau-\Delta_{A})^{-2}\nonumber\\
&+\Delta_{A}(\tau-\Delta_{A})^{-1}V_{1}(\tau-\Delta_{A})^{-1}\nonumber\\
=&2\tau\Delta_{A}(\tau-\Delta_{A})^{-2}+\tau(\tau-\Delta_{A})^{-1}V_{1}(\tau-\Delta_{A})^{-1},\nonumber
\end{flalign}
which completes the proof of Lemma \ref{lemma2.2}.
\end{proof}

\begin{lemma}\label{lemma2.3}
Let $A(x)$ satisfy hypothesis $(\textbf H)$, $Q:=x\cdot A$ and \begin{equation}\label{2.15}
V_{2}:=2A\cdot\nabla+2x\cdot(\nabla A)\cdot\nabla+2(div A)+x\cdot(\nabla\nabla A)-|A|^{2}-x\cdot(\nabla A)\cdot A,
\end{equation}
where $x\cdot(\nabla A)\cdot Au=\sum\limits_{k,j=1}^{3}(x_{k}A_{k}(\partial_{x_{k}}A_{k})u+x_{k}A_{j}(\partial_{x_{j}}A_{k})u)$. Then for any $\tau>0$, we have
$$
[x\cdot A,(-\Delta_{A})(\tau-\Delta_{A})^{-1}]=\tau(\tau-\Delta_{A})^{-1}V_{2}(\tau-\Delta_{A})^{-1}.
$$
\end{lemma}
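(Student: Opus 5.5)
The argument follows the template of Lemma \ref{lemma2.2}: compute the commutator with $-\Delta_{A}$ first, then transfer it to the resolvent. The first step is to compute $[x\cdot A,-\Delta_{A}]$ explicitly. Write $Q=x\cdot A$ and $-\Delta_{A}=-\Delta-iA\cdot\nabla-i\nabla\cdot A+|A|^{2}$. The term $|A|^{2}$ is a multiplication operator, so it commutes with $Q$. For the other pieces we use the standard identities
\begin{equation*}
[Q,-\Delta]=2\nabla Q\cdot\nabla+\Delta Q,\qquad [Q,A\cdot\nabla]=-A\cdot\nabla Q,\qquad [Q,\nabla\cdot A]=-A\cdot\nabla Q,
\end{equation*}
where in the last one $\nabla\cdot A$ means the operator $u\mapsto\nabla\cdot(Au)$. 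We then expand $\nabla Q=A+x\cdot(\nabla A)$, with the index convention fixed in the statement, and $\Delta Q=2\,\mathrm{div}A+x\cdot(\nabla\nabla A)$.

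Collecting these terms, the goal is an identity of the form
\begin{equation*}
[x\cdot A,-\Delta_{A}]=V_{2},
\end{equation*}
with $V_{2}$ as in \eqref{2.15}. The pieces come from the following sources: $2A\cdot\nabla$, $2x\cdot(\nabla A)\cdot\nabla$, $2\,\mathrm{div}A$ and $x\cdot(\nabla\nabla A)$ come from $[Q,-\Delta]$, and the terms $-|A|^{2}-x\cdot(\nabla A)\cdot A$ come from the two $-i[Q,\cdot]$ contributions, $iA\cdot\nabla Q+iA\cdot\nabla Q$. This last group is the one place where I expect the main obstacle. A naive computation gives $2iA\cdot\nabla Q$, which is purely imaginary, whereas \eqref{2.15} contains the real terms $-|A|^{2}-x\cdot(\nabla A)\cdot A$ and carries no factor $i$ on the derivative terms. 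So the precise constants and factors of $i$ in \eqref{2.15} have to be reconciled with the paper's conventions: the sign convention for $[\cdot,\cdot]$, whether $A\cdot\nabla$ in $\Delta_{A}$ carries an $i$, and the index contraction in $x\cdot(\nabla A)\cdot A$. I would do this bookkeeping carefully on Schwartz functions, and if necessary interpret $V_{2}$ as whatever first-order operator the computation produces. The structural conclusion does not depend on these constants: $V_{2}$ is a first-order differential operator with coefficients bounded by $\delta\langle x\rangle^{-11}$ by (\textbf{H}).

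The second step is purely algebraic and uses no structure of $V_{2}$. Write $R=(\tau-\Delta_{A})^{-1}$. Since $Q$ commutes with $\tau$, we have $[Q,\tau-\Delta_{A}]=[Q,-\Delta_{A}]=V_{2}$, and hence
\begin{equation*}
[Q,R]=-R[Q,\tau-\Delta_{A}]R=-RV_{2}R.
\end{equation*}
Next use $(-\Delta_{A})R=I-\tau R$. Because $Q$ commutes with $I$,
\begin{equation*}
[Q,(-\Delta_{A})R]=-\tau[Q,R]=\tau RV_{2}R.
\end{equation*}
This is exactly the claimed identity.

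Finally, to make the computation rigorous, I would verify it on $\mathscr{S}(\mathbb{R}^{3})$. This is legitimate because, under (\textbf{H}), $R$ maps into $H^{2}$ and the coefficients of $Q$ and $V_{2}$ are bounded and smooth enough, so every composition is well defined. The identity then extends by density.
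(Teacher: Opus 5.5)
Your two-step structure is the paper's: compute $[Q,-\Delta_A]$, then transfer it to the resolvent. Your resolvent step, $[Q,R]=-R[Q,\tau-\Delta_A]R$ combined with $(-\Delta_A)R=I-\tau R$, is correct. It is the same algebra as the paper's $[Q,-\Delta_A]R+\Delta_AR[Q,-\Delta_A]R=(I+\Delta_AR)[Q,-\Delta_A]R=\tau R[Q,-\Delta_A]R$, only slightly cleaner. Your domain remark is also fine: by (\textbf{H}), $Q=x\cdot A$ is $C^2$ with bounded derivatives, so it preserves $H^2$.

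The obstacle you flagged is not a matter of conventions, though. You should not leave it as ``interpret $V_2$ as whatever the computation produces''; it is a genuine error in \eqref{2.15}. Take $[X,Y]=XY-YX$ and $-\Delta_A=-\Delta-2iA\cdot\nabla-i(\mathrm{div}A)+|A|^2$, which is exactly the expansion used in Lemma \ref{lemma2.2}. Your identities then give
\[
[Q,-\Delta_A]=2\nabla Q\cdot\nabla+\Delta Q+2i\,A\cdot\nabla Q,\qquad A\cdot\nabla Q=|A|^2+\sum_{j,k=1}^{3}x_kA_j\partial_jA_k .
\]
So the last two terms of $V_2$ must be $+2i|A|^2+2i\sum_{j,k}x_kA_j\partial_jA_k$, not $-|A|^2-x\cdot(\nabla A)\cdot A$.

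The paper's proof makes exactly this slip. The first line of \eqref{2.16} contains $-2i[x\cdot A,A\cdot\nabla]$, and the factor $-2i$ disappears in the next line.

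No choice of conventions rescues \eqref{2.15}. Every coefficient in \eqref{2.15} is real, whereas the zeroth-order coefficient of $[Q,-\Delta_A]$ has imaginary part $2A\cdot\nabla Q$, which equals $2|A(0)|^2$ at the origin. Moreover, $R$ is injective on $L^2$ with range $H^2$, so $\tau RVR=\tau R\tilde VR$ forces $V=\tilde V$ for first-order operators. Hence for any admissible $A$ with $A(0)\neq0$, the lemma as literally stated is false.

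What you should do is state and prove it with the corrected $V_2$. Write the index sums explicitly, e.g.\ $\partial_jQ=A_j+\sum_kx_k\partial_jA_k$ and $\Delta Q=2\,\mathrm{div}A+\sum_{j,k}x_k\partial_j^2A_k$. Avoid the paper's shorthand, whose definitions add spurious diagonal terms. With that, your argument proves the lemma completely. The correction is harmless downstream, since Lemma \ref{lemma3.2} only uses that $V_2$ is first order with coefficients $O(\delta\langle x\rangle^{-11})$, which remains true.
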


\begin{proof}
It is easy to calculate that
\begin{equation*}
\begin{split}
&[x\cdot A,-\Delta]=2A\cdot\nabla+2x\cdot(\nabla A)\cdot\nabla+2(div A)+x\cdot(\nabla\nabla A);\\
&[x\cdot A,A\cdot\nabla]=-|A|^{2}-x\cdot(\nabla A)\cdot A;\\
&[x\cdot A,divA]=[Q,|A|^{2}]=0.
\end{split}
\end{equation*}
Using the commutations above, we deduce that
\begin{flalign}
&[x\cdot A,-\Delta_{A}]\nonumber\\
=&[x\cdot A,-\Delta]-2i[x\cdot A,A\cdot\nabla]+i[x\cdot A,divA]+[x\cdot A,|A|^{2}]\nonumber\\
=&2A\cdot\nabla+2x\cdot(\nabla A)\cdot\nabla+2(div A)+x\cdot(\nabla\nabla A)-|A|^{2}-x\cdot(\nabla A)\cdot A.\label{2.16}
\end{flalign}
Consequently, according to \eqref{2.16} and the notation of $V_{2}$, we have
\begin{flalign}
&[x\cdot A,(-\Delta_{A})(\tau-\Delta_{A})^{-1}]\nonumber\\
=&[x\cdot A,-\Delta_{A}](\tau-\Delta_{A})^{-1}
+\Delta_{A}(\tau-\Delta_{A})^{-1}
[x\cdot A,-\Delta_{A}](\tau-\Delta_{A})^{-1}\nonumber\\
=&V_{2}(\tau-\Delta_{A})^{-1}+\Delta_{A}(\tau-\Delta_{A})^{-1}V_{2}
(\tau-\Delta_{A})^{-1}\nonumber\\
=&(I+\Delta_{A}(\tau-\Delta_{A})^{-1})V_{2}(\tau-\Delta_{A})^{-1}\nonumber\\
=&\tau(\tau-\Delta_{A})^{-1}V_{2}(\tau-\Delta_{A})^{-1}.\nonumber
\end{flalign}
This completes the proof of Lemma \ref{lemma2.3}.
\end{proof}

\begin{lemma}\label{lemma2.4}
Let $A(x)$ satisfy hypothesis $(\textbf H)$, $V_{1}$ and $V_{2}$ be the operators in \eqref{2.13} and \eqref{2.15} respectively, and $V(s)$ be the operator in \eqref{2.12}. Then for $0<s<2$ and $\tau>0$, we have the integral representation of $V(s)$ with a constant $c(s)$
\begin{flalign}
V(s)=&c(s)\int^{\infty}_{0}\tau^{\frac{s}{2}}(\tau-\Delta_{A})^{-1}V_{1}(\tau-\Delta_{A})^{-1}d\tau\nonumber\\
+&ic(s)\int^{\infty}_{0}\tau^{\frac{s}{2}}(\tau-\Delta_{A})^{-1}V_{2}(\tau-\Delta_{A})^{-1}d\tau.\label{2.17}
\end{flalign}
\end{lemma}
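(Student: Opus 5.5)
The plan is to use the Balakrishnan (Stieltjes) representation of fractional powers of the nonnegative self-adjoint operator $L:=-\Delta_{A}$. For $0<s<2$,
\begin{equation*}
(-\Delta_{A})^{\frac{s}{2}}=c(s)\int_{0}^{\infty}\tau^{\frac{s}{2}-1}(-\Delta_{A})(\tau-\Delta_{A})^{-1}\,d\tau,\qquad c(s)=\frac{\sin(\pi s/2)}{\pi}.
\end{equation*}
This follows from the spectral theorem together with the scalar identity $\lambda^{\frac{s}{2}}=c(s)\int_{0}^{\infty}\tau^{\frac{s}{2}-1}\lambda(\tau+\lambda)^{-1}d\tau$ for $\lambda>0$. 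I would first apply it on a dense class (for instance $\mathscr{S}$ or $D(\Delta_{A})$ with enough decay that $x\cdot\nabla u$ makes sense). Then I would insert it into the two commutators appearing in $V(s)$ from \eqref{2.12}, and pull the commutator inside the $\tau$-integral:
\begin{equation*}
[x\cdot\nabla,(-\Delta_{A})^{\frac{s}{2}}]=c(s)\int_{0}^{\infty}\tau^{\frac{s}{2}-1}[x\cdot\nabla,(-\Delta_{A})(\tau-\Delta_{A})^{-1}]\,d\tau ,
\end{equation*}
and similarly for $[(-\Delta_{A})^{\frac{s}{2}},x\cdot A]$.

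Next I apply Lemma \ref{lemma2.2} and Lemma \ref{lemma2.3}. Lemma \ref{lemma2.2} gives the integrand
\[
\tau^{\frac{s}{2}-1}\bigl(2\tau\Delta_{A}(\tau-\Delta_{A})^{-2}+\tau(\tau-\Delta_{A})^{-1}V_{1}(\tau-\Delta_{A})^{-1}\bigr).
\]
The second term produces exactly the first integral in \eqref{2.17}. For the first term I have to show that $c(s)\int_{0}^{\infty}2\tau^{\frac{s}{2}}\Delta_{A}(\tau-\Delta_{A})^{-2}d\tau=-s(-\Delta_{A})^{\frac{s}{2}}$, so that it cancels the term $s(-\Delta_{A})^{\frac{s}{2}}$ in $V(s)$. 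By functional calculus this reduces to the scalar identity
\[
2c(s)\int_{0}^{\infty}\tau^{\frac{s}{2}}\lambda(\tau+\lambda)^{-2}d\tau=s\lambda^{\frac{s}{2}} .
\]
To verify it, I integrate by parts: write $\lambda(\tau+\lambda)^{-2}=-\partial_{\tau}\bigl[\lambda(\tau+\lambda)^{-1}\bigr]$. The boundary terms vanish since $0<s<2$, and what remains is $\frac{s}{2}\int\tau^{\frac{s}{2}-1}\lambda(\tau+\lambda)^{-1}d\tau$, which equals $\frac{s}{2c(s)}\lambda^{\frac{s}{2}}$ by the Balakrishnan formula.

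For the $x\cdot A$ term, $-i[(-\Delta_{A})^{\frac{s}{2}},x\cdot A]=i[x\cdot A,(-\Delta_{A})^{\frac{s}{2}}]$. By Lemma \ref{lemma2.3} this equals $ic(s)\int\tau^{\frac{s}{2}}(\tau-\Delta_{A})^{-1}V_{2}(\tau-\Delta_{A})^{-1}d\tau$, which is the second integral in \eqref{2.17}.

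The main obstacle is justifying the interchange of the commutator with the improper $\tau$-integral, together with convergence of the resulting integrals at $\tau\to0$ and $\tau\to\infty$, given that $x\cdot\nabla$ is unbounded. I would handle this on a core, using these facts:
\begin{itemize}
\item $(\tau-\Delta_{A})^{-1}$ maps $\mathscr{S}$-type weighted spaces to themselves, since $A\in C^{2}$ and is decaying by $(\textbf H)$.
\item $\|(\tau-\Delta_{A})^{-1}\|\le\tau^{-1}$ and $\|\nabla(\tau-\Delta_{A})^{-1}\|\lesssim\tau^{-1/2}$.
\item The coefficients of $V_{1}$ and $V_{2}$ are bounded thanks to $\langle x\rangle^{-12}$ decay, so $V_{1}$ and $V_{2}$ are first-order operators with bounded coefficients.
\end{itemize}
With these, the integrand $\tau^{\frac{s}{2}}(\tau-\Delta_{A})^{-1}V_{j}(\tau-\Delta_{A})^{-1}u$ is $O(\tau^{\frac{s}{2}-\frac{3}{2}})$ as $\tau\to\infty$. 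This is integrable for $s<1$ in operator norm. For $1\le s<2$, I would instead exploit the extra regularity of $u$ (for example $u\in D(\Delta_{A})$, giving $(\tau-\Delta_{A})^{-1}u=O(\tau^{-2})$ after using $\Delta_{A}u$) to obtain convergence. The scalar identities and the finite truncations $\int_{\varepsilon}^{R}$ can then be passed to the limit by dominated convergence.
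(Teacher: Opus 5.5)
Your proposal is correct and follows the paper's proof essentially step for step: you insert \eqref{2.18} into the commutators of \eqref{2.12}, apply Lemmas \ref{lemma2.2} and \ref{lemma2.3}, and cancel $s(-\Delta_{A})^{\frac{s}{2}}$ against $2c(s)\int_{0}^{\infty}\tau^{\frac{s}{2}}\Delta_{A}(\tau-\Delta_{A})^{-2}d\tau$ via the scalar identity $\int_{0}^{\infty}\tau^{\frac{s}{2}}\lambda(\tau+\lambda)^{-2}d\tau=\frac{s}{2}\int_{0}^{\infty}\tau^{\frac{s}{2}-1}\lambda(\tau+\lambda)^{-1}d\tau$, which is the same identity the paper uses under the spectral integral. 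The paper argues purely formally, so your convergence discussion goes beyond it, but it has two flaws: for $u\in D(\Delta_{A})$ the gain as $\tau\to\infty$ is $\|\nabla(\tau-\Delta_{A})^{-1}u\|_{L^{2}}=O(\tau^{-1})$, not $(\tau-\Delta_{A})^{-1}u=O(\tau^{-2})$ (false, since $(\tau-\Delta_{A})^{-1}u=\tau^{-1}u+O(\tau^{-2})$), and the end $\tau\to0$ is not covered by the bounds you list, which only give $O(\tau^{\frac{s}{2}-2})$ there, so it needs an extra input such as weighted/Hardy-type estimates exploiting the decay of the coefficients of $V_{1},V_{2}$.
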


\begin{proof}
Let us recall the formula	
\begin{equation}\label{2.18}
(-\Delta_{A})^{\frac{s}{2}}=c(s)(-\Delta_{A})\int^{\infty}_{0}\tau^{\frac{s}{2}-1}(\tau-\Delta_{A})^{-1}d\tau,
\end{equation}
where $(c(s))^{-1}=\int^{\infty}_{0}\tau^{\frac{s}{2}-1}(\tau+1)^{-1}d\tau$ with $0<s<2$ (also can see \cite{I}). \\
Inserting \eqref{2.18} into \eqref{2.12}, we obtain that
\begin{equation*}
\begin{split}
V(s)&=s(-\Delta_{A})^{\frac{s}{2}}+c(s)\int^{\infty}_{0}\tau^{\frac{s}{2}-1}[x\cdot\nabla,(-\Delta_{A})(\tau-\Delta_{A})^{-1}]d\tau\\
&+ic(s)\int^{\infty}_{0}\tau^{\frac{s}{2}-1}[x\cdot A,(-\Delta_{A})(\tau-\Delta_{A})^{-1}]d\tau.
\end{split}
\end{equation*}
From Lemma \ref{lemma2.2} and Lemma \ref{lemma2.3}, it follows that
\begin{flalign}
V(s)&=s(-\Delta_{A})^{\frac{s}{2}}+2c(s)\int^{\infty}_{0}\tau^{\frac{s}{2}}\Delta_{A}(\tau-\Delta_{A})^{-2}d\tau\nonumber\\
&+c(s)\int^{\infty}_{0}\tau^{\frac{s}{2}}(\tau-\Delta_{A})^{-1}V_{1}(\tau-\Delta_{A})^{-1}d\tau\nonumber\\
&+ic(s)\int^{\infty}_{0}\tau^{\frac{s}{2}}(\tau-\Delta_{A})^{-1}V_{2}(\tau-\Delta_{A})^{-1}d\tau.\label{2.19}
\end{flalign}
For the second item of the right-hand side of \eqref{2.19}, we have
\begin{equation*}
\begin{split}
&2c(s)\int^{\infty}_{0}\tau^{\frac{s}{2}}\Delta_{A}(\tau-\Delta_{A})^{-2}d\tau\\
=&-2c(s)\int^{\infty}_{0}\int^{\infty}_{0} \tau^{\frac{s}{2}}\lambda(\tau+\lambda)^{-2}dE_{\lambda}d\tau\\
=&-2c(s)\int^{\infty}_{0}\lambda\cdot\frac{s}{2}\int^{\infty}_{0}\tau^{\frac{s}{2}-1}(\tau+\lambda)^{-1}d\tau dE_{\lambda}\\
=&2c(s)\Delta_{A}\cdot\frac{s}{2}
\int^{\infty}_{0}\tau^{\frac{s}{2}-1}(\tau-\Delta_{A})^{-1}d\tau\\
=&sc(s)\Delta_{A}\int^{\infty}_{0}\tau^{\frac{s}{2}-1}(\tau-\Delta_{A})^{-1}d\tau\\
=&-s(-\Delta_{A})^{\frac{s}{2}}.
\end{split}
\end{equation*}
Hence,
\begin{equation*}
\begin{split}
V(s)=&s(-\Delta_{A})^{\frac{s}{2}}-s(-\Delta_{A})^{\frac{s}{2}}+c(s)\int^{\infty}_{0}\tau^{\frac{s}{2}}
(\tau-\Delta_{A})^{-1}V_{1}(\tau-\Delta_{A})^{-1}d\tau\\
&+ic(s)\int^{\infty}_{0}\tau^{\frac{s}{2}}(\tau-\Delta_{A})^{-1}V_{2}(\tau-\Delta_{A})^{-1}d\tau\\
=&c(s)\int^{\infty}_{0}\tau^{\frac{s}{2}}(\tau-\Delta_{A})^{-1}V_{1}(\tau-\Delta_{A})^{-1}d\tau\\
&+ic(s)\int^{\infty}_{0}\tau^{\frac{s}{2}}(\tau-\Delta_{A})^{-1}V_{2}(\tau-\Delta_{A})^{-1}d\tau.
\end{split}
\end{equation*}
This completes the proof of Lemma \ref{lemma2.4}.
\end{proof}
In order to obtain the decay result in Section 4, we need the following lemma:
\begin{lemma}\label{lemma2.5}
For any $s>\frac{3}{2}$, and $u\in C_{0}^{\infty}(\mathbb{R}^{3})$, there exists a fixed $C>0$ such that
\begin{equation}\label{2.20}
\|u\|_{L^{\infty}_{x}(\mathbb{R}^{3})}\leqslant C\|u\|_{L^{2}_{x}(\mathbb{R}^{3})}^{1-\frac{3}{2s}}\cdot\|u\|_{\dot{H}^{s}(\mathbb{R}^{3})}^{\frac{3}{2s}}.
\end{equation}
\end{lemma}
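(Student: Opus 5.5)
The plan is the standard Fourier-splitting argument followed by optimization in the cutoff. Fix $u\in C_0^\infty(\mathbb{R}^3)$. By Fourier inversion,
\[
\|u\|_{L^\infty}\leqslant (2\pi)^{-\frac32}\|\widehat{u}\|_{L^1}.
\]
For a parameter $R>0$, to be chosen later, I split the integral of $|\widehat u(\xi)|$ into the regions $\{|\xi|\leqslant R\}$ and $\{|\xi|>R\}$.

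\textbf{Low frequencies.} By Cauchy--Schwarz and Plancherel,
\[
\int_{|\xi|\leqslant R}|\widehat u|\,d\xi\leqslant |B_R|^{\frac12}\|\widehat u\|_{L^2}=CR^{\frac32}\|u\|_{L^2}.
\]

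\textbf{High frequencies.} Writing $|\widehat u|=|\xi|^{-s}\cdot|\xi|^{s}|\widehat u|$ and applying Cauchy--Schwarz,
\[
\int_{|\xi|>R}|\widehat u|\,d\xi\leqslant\Big(\int_{|\xi|>R}|\xi|^{-2s}d\xi\Big)^{\frac12}\|u\|_{\dot H^s}.
\]
In polar coordinates the first factor equals $\big(4\pi\int_R^\infty r^{2-2s}dr\big)^{1/2}=C_sR^{\frac32-s}$. This is exactly where the hypothesis $s>\frac32$ enters: it makes $2s-2>1$, so the radial integral converges.

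\textbf{Optimization in $R$.} Combining the two bounds gives
\[
\|u\|_{L^\infty}\leqslant C\big(R^{\frac32}\|u\|_{L^2}+R^{\frac32-s}\|u\|_{\dot H^s}\big).
\]
If $u\neq0$, then $\|u\|_{\dot H^s}>0$ as well, since $\widehat u$ is not identically zero. Balancing the two terms, I choose $R=(\|u\|_{\dot H^s}/\|u\|_{L^2})^{1/s}$. With this choice both terms equal $\|u\|_{L^2}^{1-\frac{3}{2s}}\|u\|_{\dot H^s}^{\frac{3}{2s}}$, which yields \eqref{2.20} with a constant $C$ depending only on $s$. The case $u=0$ is trivial.

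There is no real obstacle here. The only care points are the convergence condition $s>\frac32$ in the high-frequency tail, and checking that the optimizing $R$ is admissible, i.e.\ that both norms are finite and positive; this holds for $u\in C_0^\infty\setminus\{0\}$.
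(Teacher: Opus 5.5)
Your proposal is correct and follows essentially the same route as the paper: bound $\|u\|_{L^\infty}$ by $\|\mathscr{F}u\|_{L^1}$, split at a frequency cutoff, apply Cauchy--Schwarz on each piece (with $s>\frac{3}{2}$ ensuring $|\xi|^{-s}\in L^{2}(|\xi|\geqslant k)$), and balance the two terms through the choice of cutoff. The only cosmetic difference is that the paper builds the constants into $k$ so the two terms are exactly equal, whereas your $R=(\|u\|_{\dot{H}^{s}}/\|u\|_{L^{2}})^{1/s}$ balances them up to an $s$-dependent constant, which is equally valid.
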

\begin{proof}
From properties of the Fourier transform, it follows that
\begin{equation}\label{2.21}
\|u\|_{L_{x}^{\infty}}\leqslant C\|\mathscr{F}u\|_{L_{x}^1}.
\end{equation}
Furthermore, for $\forall\ k>0$
\begin{flalign}
\|\mathscr{F}u\|_{L_{x}^{1}(\mathbb{R}^{3})} &\leqslant \|\mathscr{F}u\|_{L_{x}^{1}(|\xi|\leqslant k)}+\|\mathscr{F}u\|_{L_{x}^{1}(|\xi|\geqslant k)}\nonumber\\
& \leqslant \|\mathscr{F}u\|_{L_{x}^{2}(|\xi|\leqslant k)}(\int_{|\xi|\leqslant k}1d\xi)^{\frac{1}{2}}+\int_{|\xi|\geqslant k}(\mathscr{F}u)\cdot|\xi|^{s}\cdot|\xi|^{-s}d\xi\nonumber\\
& \leqslant \|\mathscr{F}u\|_{L_{x}^{2}(|\xi|\leqslant k)}\cdot\sqrt{\frac{4\pi}{3}}k^{\frac{3}{2}}+\|(\mathscr{F}u)\cdot|\xi|^{s}\|_{L_{x}^{2}(|\xi|\geqslant k)}\||\xi|^{-s}\|_{L_{x}^{2}(|\xi|\geqslant k)}\nonumber\\
& \leqslant \|\mathscr{F}u\|_{L_{x}^{2}(\mathbb{R}^{3})}\cdot\sqrt{\frac{4\pi}{3}}k^{\frac{3}{2}}+\sqrt{\frac{4\pi}{2s-3}}k^{\frac{3}{2}-s}\|u\|_{\dot{H}^{s}(\mathbb{R}^{3})}, \label{2.22}
\end{flalign}
let $k=(\sqrt{\frac{3}{2s-3}}\|u\|_{\dot{H}_{x}^{s}})^{\frac{1}{s}}\cdot\|u\|^{-\frac{1}{s}}_{L_{x}^{2}}$, then the last two terms of \eqref{2.22} are equal.
Combining \eqref{2.21} and \eqref{2.22}, inequality \eqref{2.20} follows. This completes the proof of Lemma \ref{lemma2.5}.
\end{proof}

\section{\bf Resolvent estimates}
In this section, according to the integral representation of $V(s)$ in Lemma \ref{lemma2.4}, we need to estimate the terms of resolvent with weight functions before we obtain the estimate of $V(s)$.
\begin{lemma}\label{lemma3.1}
Let $R_{0}=(\tau-\Delta)^{-1}$, we have the weighted resolvent estimates for $\tau>0$ and $N>3$
\begin{flalign}
&\|(\tau-\Delta)^{-1}f\|_{L^{2}}\leqslant C\tau^{-1}\|f\|_{L^{2}},\label{3.1}\\
&\|(\tau-\Delta)^{-1}\langle x\rangle ^{-N}f\|_{L^{2}}\leqslant C\tau^{-\frac{1}{2}}\|f\|_{L^{2}},\label{3.2}\\
&\|\langle x\rangle ^{-N}(\tau-\Delta)^{-1}f\|_{L^{2}}\leqslant C\tau^{-\frac{1}{2}}\|f\|_{L^{2}}.\label{3.3}
\end{flalign}
\end{lemma}

\begin{proof}
Let us recall the formula of the free resolvent operator $R_{0}=(\tau-\Delta)^{-1}$
\begin{flalign}
R_{0}f(x)=&(\tau-\Delta)^{-1}f(x)\nonumber\\
=&\mathscr{F}^{-1}(\frac{1}{|\xi|^{2}+\tau}\mathscr{F}f)\nonumber\\
=&(\mathscr{F}^{-1}(\frac{1}{|\xi|^{2}+\tau}))\ast f\nonumber\\
:=&\int_{\mathbb{R}^{3}}K(\tau,x-y)f(y)dy.\label{3.4}
\end{flalign}
Next, we will calculate the integral kernel $K(\tau,x-y)$ of the free resolvent operator	
\begin{flalign}	
K(\tau,x-y)=&(2\pi)^{-\frac{3}{2}}\int_{\mathbb{R}^{3}}\frac{e^{i(x-y)\cdot\xi}}{|\xi|^{2}+\tau}d\xi\nonumber\\
=&(2\pi)^{-\frac{3}{2}}\int_{0}^{\infty}\frac{r^{2}}{r^{2}+\tau}\int_{|\omega|=1}e^{i(x-y)\cdot r\omega}d\omega dr\nonumber\\
=&(2\pi)^{-\frac{3}{2}}\int_{0}^{\infty}\frac{r^{2}}{r^{2}+\tau}\cdot(r|x-y|)^{-\frac{1}{2}}\cdot J_{\frac{1}{2}}(r|x-y|)dr\nonumber\\
=&\frac{1}{2\pi^{2}}\cdot\frac{1}{|x-y|}\int_{0}^{\infty}\frac{r}{r^{2}+\tau}sin(r|x-y|)dr\nonumber\\
=&\frac{1}{2\pi^{2}}\cdot\frac{1}{|x-y|}\int_{0}^{\infty}\frac{r}{r^{2}+\tau}sin(r|x-y|)dr\nonumber\\
=&\frac{1}{4\pi^{2}i}\cdot\frac{1}{|x-y|}\int_{-\infty}^{\infty}\frac{r}{r^{2}+\tau}e^{i|x-y|r}dr\nonumber\\
=&\frac{1}{2\pi}\cdot\frac{1}{|x-y|}Res\{\frac{r}{r^{2}+\tau}e^{i|x-y|r}, i\tau^{\frac{1}{2}}\}\nonumber\\
=&\frac{1}{2\pi}\cdot\frac{e^{-|x-y| \tau^{\frac{1}{2}}}}{|x-y|}\nonumber\\
=&\frac{1}{2\pi}\cdot\frac{e^{-|x-y|\tau^{\frac{1}{2}}}}{|x-y|},\label{3.5}
\end{flalign}	
where $J_{\frac{1}{2}}(x)=(\frac{2}{\pi x})^{\frac{1}{2}}\cdot \sin x$ is a Bessel function.\\
From Young's inequality and the expression of the integral kernel $K(\tau,x-y)$ in \eqref{3.5}, we obtain that
\begin{flalign}	
\|(\tau-\Delta)^{-1}f\|_{L^{2}}\leqslant &\|K(\tau,x)\|_{L^{1}}\|f\|_{L^{2}}\nonumber\\
=&\frac{1}{2\pi}\int_{\mathbb{R}^{3}}\frac{e^{-|x-y|\tau^{\frac{1}{2}}}}{|x-y|}dx\cdot\|f\|_{L^{2}}\nonumber\\
=&2\pi\int_{0}^{\infty}re^{-r\tau^{\frac{1}{2}}}dr\cdot\|f\|_{L^{2}}\nonumber\\
\leqslant&C\tau^{-1}\|f\|_{L^{2}},\label{3.6}
\end{flalign}
which completes the proof of estimate \eqref{3.1}.\\
Now we turn to \eqref{3.2}, again from Young's inequality, identity \eqref{3.5} and H\"{o}lder's inequality, we have
\begin{flalign}	
\|(\tau-\Delta)^{-1}\langle x\rangle ^{-N}f\|_{L^{2}}&\leqslant C\|K(\tau,x)\|_{L^{\frac{3}{2}}}\|\langle x\rangle ^{-N}f\|_{L^{\frac{6}{5}}}\nonumber\\
&\leqslant C\|K(\tau,x)\|_{L^{\frac{3}{2}}}\|f\|_{L^{2}}\cdot\|\langle x\rangle ^{-N}\|_{L^{3}},\label{3.7}
\end{flalign}
and we get
\begin{flalign}
\|K(\tau,x)\|_{L^{\frac{3}{2}}}=&\|\frac{1}{2\pi}\cdot\frac{e^{-|x|\tau^{\frac{1}{2}}}}{|x|}\|_{L^{\frac{3}{2}}}\nonumber\\
\mathop{=}\limits^{|x|=r}&(2\int_{0}^{\infty}\frac{e^{-\frac{3}{2}\tau^{\frac{1}{2}}r}}{r^{\frac{3}{2}}}\cdot r^{2}dr)^{\frac{2}{3}}\nonumber\\	\mathop{=}\limits^{r^{\frac{1}{2}}=\eta}&(4\int_{0}^{\infty}e^{-\frac{3}{2}\tau^{\frac{1}{2}}\eta^{2}}\cdot\eta^{2}d\eta)^{\frac{2}{3}}\nonumber\\
=&(\frac{4}{3}\tau^{-\frac{1}{2}}\int_{0}^{\infty}e^{-\frac{3}{2}\tau^{\frac{1}{2}}\eta^{2}}d\eta)^{\frac{2}{3}}\nonumber\\
=&(\frac{4}{3}\tau^{-\frac{1}{2}}\cdot\frac{1}{2}(\frac{3}{2})^{-\frac{1}{2}}\tau^{-\frac{1}{4}}\int_{\mathbb{R}}e^{-((\frac{3}{2})^{\frac{1}{2}}\tau^{\frac{1}{4}}\eta)^{2}}d((\frac{3}{2})^{\frac{1}{2}}\tau^{\frac{1}{4}}\eta))^{\frac{2}{3}}\nonumber\\
=&(\frac{2}{3}\tau^{-\frac{1}{2}}\cdot(\frac{2}{3})^{\frac{1}{2}}\tau^{-\frac{1}{4}}\cdot\pi^{\frac{1}{2}})^{\frac{2}{3}}\nonumber\\
=&\frac{2\pi^{\frac{1}{3}}}{3}\tau^{-\frac{1}{2}},\label{3.8}
\end{flalign}
since $\int_{\mathbb{R}}e^{-t^{2}}dt=\sqrt{\pi}$.\\
Therefore, applying \eqref{3.8} to \eqref{3.7}, we have $$\|(\tau-\Delta)^{-1}\langle x\rangle ^{-N}f\|_{L^{2}}\leqslant C\tau^{-\frac{1}{2}}\|f\|_{L^{2}},$$
which completes the proof of estimate \eqref{3.2}.\\
Similarly, according to H\"{o}lder's inequality, identity \eqref{3.8} and Young's inequality, we have
\begin{flalign}	
&\|\langle x\rangle ^{-N}(\tau-\Delta)^{-1}f\|_{L^{2}}\nonumber\\
=&\|\langle x\rangle ^{-N}(K\ast f)\|_{L^{2}}\nonumber\\
\leqslant&\|\langle x\rangle ^{-N}\|_{L^{3}}\cdot\|K\ast f\|_{L^{6}}\nonumber\\
\leqslant&C\|K(\tau,x)\|_{L^{\frac{3}{2}}}\cdot\|f\|_{L^{2}}\nonumber\\
\leqslant&C\tau^{-\frac{1}{2}}\|f\|_{L^{2}},\label{3.9}
\end{flalign}
which completes the proof of estimate \eqref{3.3}.
\end{proof}

Similar the proof of Lemma \ref{lemma3.1}, we have	
\begin{corollary}\label{corollary3.1}
Let $R_{0}=(\tau-\Delta)^{-1}$, we have the weighted resolvent estimates for $\tau>0$ and $N>3$
\begin{flalign*}
&\|(\tau-\Delta)^{-1}f\|_{L^{r}}\leqslant C\tau^{-1}\|f\|_{L^{r}},\\
&\|(\tau-\Delta)^{-1}\langle x\rangle ^{-N}f\|_{L^{r}}\leqslant C\tau^{-\frac{1}{r}}\|f\|_{L^{r}},\\
&\|\langle x\rangle ^{-N}(\tau-\Delta)^{-1}f\|_{L^{r}}\leqslant C\tau^{-\frac{1}{r'}}\|f\|_{L^{r}},
\end{flalign*}	
where $1\leqslant r<\infty$	and $\frac{1}{r}+\frac{1}{r'}=1$.
\end{corollary}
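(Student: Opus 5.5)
The plan is to repeat the proof of Lemma \ref{lemma3.1}, using the explicit kernel \eqref{3.5}, Young's inequality and H\"older's inequality, but now choosing the Lebesgue exponents as functions of $r$. The basic tool is the scaling identity
$$K(\tau,x)=\tau^{\frac12}K(1,\tau^{\frac12}x),$$
which follows from \eqref{3.5}. It gives
$$\|K(\tau,\cdot)\|_{L^{q}}=\tau^{\frac12-\frac{3}{2q}}\|K(1,\cdot)\|_{L^{q}}.$$
Since $K(1,x)=\frac{1}{2\pi}e^{-|x|}|x|^{-1}$ is singular only like $|x|^{-1}$ at the origin and decays exponentially at infinity, the right-hand side is finite exactly when $1\leqslant q<3$.

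\textbf{First estimate.} Take $q=1$ in Young's inequality. Then $\|K(\tau,\cdot)\|_{L^{1}}=C\tau^{-1}$, which gives the bound on $L^{r}$ for every $1\leqslant r<\infty$.

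\textbf{Second estimate.} Write
$$\|K\ast(\langle x\rangle^{-N}f)\|_{L^{r}}\leqslant\|K\|_{L^{q}}\,\|\langle x\rangle^{-N}f\|_{L^{p}},\qquad 1+\frac1r=\frac1q+\frac1p.$$
To obtain the power $\tau^{-\frac1r}$, choose $q$ by $\frac12-\frac{3}{2q}=-\frac1r$, that is, $\frac1q=\frac13(1+\frac2r)$. This lies in $(\frac13,1]$, so $1\leqslant q<3$ and $\|K\|_{L^q}$ is finite. The resulting $p$ satisfies
$$\frac1p=\frac23+\frac{1}{3r}\geqslant\frac1r,$$
so H\"older's inequality gives $\|\langle x\rangle^{-N}f\|_{L^{p}}\leqslant\|\langle x\rangle^{-N}\|_{L^{m}}\|f\|_{L^{r}}$ with $\frac1m=\frac{2}{3r'}$. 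The weight norm $\|\langle x\rangle^{-N}\|_{L^{m}}$ is finite because $Nm\geqslant\frac{3N}{2}>3$ when $N>3$ (for $r=1$ we have $m=\infty$, which is trivial). For $r=2$ this reproduces exactly the choice $q=\frac32$ made in \eqref{3.7}.

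\textbf{Third estimate, $1<r<\infty$.} Put the weight on the outside first:
$$\|\langle x\rangle^{-N}(K\ast f)\|_{L^{r}}\leqslant\|\langle x\rangle^{-N}\|_{L^{m}}\|K\ast f\|_{L^{\tilde p}},\qquad \|K\ast f\|_{L^{\tilde p}}\leqslant\|K\|_{L^{q}}\|f\|_{L^{r}},\qquad 1+\frac{1}{\tilde p}=\frac1q+\frac1r.$$
To obtain the power $\tau^{-\frac{1}{r'}}$, solve $\frac12-\frac{3}{2q}=-\frac1{r'}$. This gives
$$\frac1q=1-\frac{2}{3r},\qquad \frac1{\tilde p}=\frac1{3r},\qquad \frac1m=\frac{2}{3r}.$$
Again $Nm>3$. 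The condition $q<3$ is equivalent to $r>1$, so this argument covers $1<r<\infty$; for $r=2$ it is \eqref{3.9}.

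\textbf{Third estimate, $r=1$: the main obstacle.} Here the claimed power is $\tau^{0}$, but the recipe above would need $q=3$, and $K(1,\cdot)\notin L^{3}$ because of the $|x|^{-1}$ singularity. I therefore treat this case directly with Fubini (a Schur-type bound):
$$\|\langle x\rangle^{-N}(K\ast f)\|_{L^{1}}\leqslant\sup_{y}\int_{\mathbb{R}^3}\langle x\rangle^{-N}\frac{e^{-|x-y|\tau^{1/2}}}{2\pi|x-y|}\,dx\;\|f\|_{L^{1}}\leqslant\sup_{y}\int_{\mathbb{R}^3}\frac{\langle x\rangle^{-N}}{2\pi|x-y|}\,dx\;\|f\|_{L^{1}}.$$
The last supremum is finite and independent of $\tau$: split the integral into the region $|x-y|\leqslant1$, where $|x-y|^{-1}$ is locally integrable, and $|x-y|>1$, where $\langle x\rangle^{-N}$ is integrable. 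This yields the $r=1$ case with constant independent of $\tau$.
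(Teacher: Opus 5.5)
Your proof is correct and takes the route the paper intends. The paper only says the corollary follows as in Lemma~\ref{lemma3.1}, via the explicit kernel \eqref{3.5}, Young and H\"older; your scaling $\|K(\tau,\cdot)\|_{L^q}=\tau^{\frac12-\frac{3}{2q}}\|K(1,\cdot)\|_{L^q}$ fixes the exponents and reproduces \eqref{3.7}--\eqref{3.9} at $r=2$. Your separate Fubini/Schur bound for the third estimate at $r=1$ is a needed addition: there the Young--H\"older recipe would require $K(1,\cdot)\in L^{3}$, which fails, and the paper's one-line ``similar'' argument does not address this endpoint.
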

		
\begin{lemma}\label{lemma3.2}
(the estimate of $V(s)$)\\
Let $V(s)$ be the operator that appears in  Lemma \ref{lemma2.4}, for $\forall\ 1\leqslant r\leqslant2$ and $\frac{2}{r}-1<s<2$, we have
\begin{equation}\label{3.10}
\|V(s)f\|_{L_{x}^{r}}\lesssim \|f\|_{\dot{H}_{x}^{1}},\ \forall f\in \mathscr{S}(\mathbb{R}^{3}).
\end{equation}
\end{lemma}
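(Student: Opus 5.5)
We start from the integral representation \eqref{2.17} of Lemma \ref{lemma2.4}. The aim is to bound, for each $\tau>0$,
\[
\|(\tau-\Delta_{A})^{-1}V_{j}(\tau-\Delta_{A})^{-1}f\|_{L^{r}_{x}}, \qquad j=1,2,
\]
by $m(\tau)\|f\|_{\dot H^{1}_{x}}$, with a function $m$ satisfying $\int_{0}^{\infty}\tau^{\frac{s}{2}}m(\tau)\,d\tau<\infty$. This reduces the lemma to weighted resolvent bounds.

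The structural fact we exploit is that $V_{1}$ and $V_{2}$ in \eqref{2.13}, \eqref{2.15} are first-order operators whose coefficients decay. Writing $V_{j}=b_{j}(x)\cdot\nabla+c_{j}(x)$, hypothesis $(\textbf H)$ gives $|b_{j}|+|c_{j}|\lesssim\delta\langle x\rangle^{-11}$. We can therefore factor $V_{j}=\langle x\rangle^{-N}\widetilde V_{j}$ with $N>3$ and $\widetilde V_{j}$ still first order with bounded coefficients. The outer resolvent then always acts on $\langle x\rangle^{-N}g$, and the inner factor is $\nabla(\tau-\Delta_{A})^{-1}f$ or $(\tau-\Delta_{A})^{-1}f$.

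We pass from $\Delta_{A}$ to $\Delta$ with the resolvent identity
\[
(\tau-\Delta_{A})^{-1}=(\tau-\Delta)^{-1}+(\tau-\Delta)^{-1}W(\tau-\Delta_{A})^{-1},\qquad W=\Delta_{A}-\Delta=2iA\cdot\nabla+i\,\mathrm{div}A-|A|^{2}.
\]
The perturbation $W$ is first order with coefficients $O(\delta\langle x\rangle^{-12})$. Together with Lemma \ref{lemma3.1} and Corollary \ref{corollary3.1}, and using that $\delta$ is small, a Neumann-series argument transfers the weighted free bounds to $R_{A}$:
\[
\|R_{A}\langle x\rangle^{-N}g\|_{L^{r}}\lesssim\tau^{-\frac{1}{r}}\|g\|_{L^{r}},\qquad
\|\nabla R_{A}f\|_{L^{2}}\lesssim\|\nabla f\|_{L^{2}}\ \text{(uniformly in $\tau$)},\qquad
\|\nabla R_{A}f\|_{L^{2}}\lesssim\tau^{-\frac{1}{2}}\|f\|_{L^{2}}.
\]
The gradient bounds come from the free kernel \eqref{3.5}, since $\nabla R_{0}$ has kernel $O(e^{-c\sqrt{\tau}|x|}(|x|^{-2}+\sqrt{\tau}|x|^{-1}))$. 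The inner factor $\nabla R_{A}f$ carries only $\|f\|_{\dot H^{1}}$, so we need $\tau$-decay there as well. The plan is to commute $\nabla$ through, writing $\nabla R_{A}f=R_{A}\nabla f+R_{A}[\nabla,\Delta_{A}]R_{A}f$. This gives
\[
\|\langle x\rangle^{-M}\nabla R_{A}f\|_{L^{2}}\lesssim\tau^{-\frac{1}{2}}\|\nabla f\|_{L^{2}},
\]
with the commutator terms carrying extra decay and a factor $\delta$. To move from $L^{2}$ to $L^{r}$ after multiplying by $\langle x\rangle^{-N}$ we use H\"older, $\|\langle x\rangle^{-N}h\|_{L^{r}}\lesssim\|h\|_{L^{2}}$, which holds because $r\le2$.

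Combining, the integrand is bounded by
\[
\tau^{\frac{s}{2}}\cdot\tau^{-\frac{1}{r}}\cdot\min\bigl(\tau^{-\frac{1}{2}},1\bigr)\cdot(\text{further }\tau\text{-factors})\,\|f\|_{\dot H^{1}},
\]
so we split the $\tau$-integral into $\tau\le1$ and $\tau\ge1$.
\begin{itemize}
\item For $\tau\ge1$ we need total decay faster than $\tau^{-1-\frac{s}{2}}$. Using \eqref{3.1}-type bounds, where the outer resolvent gives $\tau^{-1}$ in $L^{r}$ through the free $L^{r}$ estimate and the inner gradient bound gives $\tau^{-\frac12}$, the total is $\tau^{\frac{s}{2}-\frac32}$. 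This is integrable at infinity only if $s<1$, so for larger $s$ we need a better high-energy bound, possibly $\tau^{-1}$ from each factor, which requires $s<2$.
\item For $\tau\le1$ we need the exponent to exceed $-1$. The weighted outer bound $\tau^{-\frac{1}{r}}$ together with a uniform inner bound gives $\tau^{\frac{s}{2}-\frac{1}{r}}$, and integrability at $0$ requires $\frac{s}{2}-\frac{1}{r}>-1$, which is weaker than the hypothesis. If instead we must take $\tau^{-\frac12}$ on the inner factor, the condition becomes $\frac{s}{2}-\frac1r-\frac12>-1$, i.e.\ exactly $s>\frac{2}{r}-1$. This matches the hypothesis of Lemma \ref{lemma3.2}, so that is the intended bookkeeping.
\end{itemize}

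The main obstacle is the low-energy regime for the magnetic resolvent with a derivative, that is, obtaining $\|\langle x\rangle^{-N}\nabla(\tau-\Delta_{A})^{-1}\|$ bounds with the sharp power of $\tau$ uniformly as $\tau\to0$. This needs the absence of a zero eigenvalue or resonance (Lemma A.2 of \cite{KK}) and the smallness of $\delta$ to close the Neumann series. The commutator remainders produced by moving $\nabla$ and the weights past $R_{A}$ must also be shown to carry factors of $\delta$ and no worse powers of $\tau$.
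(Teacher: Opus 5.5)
Your architecture matches the paper's. You use the representation \eqref{2.17}, split at $\tau=1$, take an outer weighted resolvent bound $\tau^{-1/r}$, get an inner bound $\tau^{-1/2}\|\nabla f\|_{L^2}$ by commuting $\nabla$ through $(\tau-\Delta_A)^{-1}$, and aim for $\tau^{-1}\cdot\tau^{-1}$ at high energy. Replacing the paper's Fredholm/limiting-absorption step with a Neumann series in $\delta$ is fine, and it is the cleaner way to get constants uniform in $\tau$.

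\textbf{The low-energy inner factor.} This is the genuine gap. Two zero-order terms leave you with $\langle x\rangle^{-M}(\tau-\Delta_A)^{-1}f$, with no derivative on the resolvent: the zero-order part $c_j(x)$ of $V_j$, and the zero-order part $i\partial_j(\mathrm{div}A)-\partial_j|A|^2$ of $[\partial_j,\Delta_A]$ in your commutation step. This quantity is \emph{not} $O(\tau^{-1/2}\|\nabla f\|_{L^2})$, already for $A=0$. Take $\hat f(\xi)=|\xi|^{-2}\chi(\xi/\sqrt{\tau})$ with $0\le\chi\in C_c^\infty$ and $\chi=1$ near $0$. Then $\|\nabla f\|_{L^2}\approx\tau^{1/4}$. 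Also $(\tau-\Delta)^{-1}f(x)=\tau^{-1/2}G(\sqrt{\tau}x)$ with $G$ continuous and $G(0)>0$. Hence $\|\langle x\rangle^{-M}(\tau-\Delta)^{-1}f\|_{L^2}\gtrsim\tau^{-1/2}\approx\tau^{-3/4}\|\nabla f\|_{L^2}$, and this rate is sharp.

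So the commutator remainders carry a factor $\delta$ but a \emph{worse} power of $\tau$, not extra decay. For $\tau\ll\delta^2$ they dominate your gradient bound. For example, take $A=\nabla\varphi$ with $\varphi\in C_c^\infty$ small; then $R_A=e^{-i\varphi}R_0e^{i\varphi}$, and the same $f$ gives $\|\langle x\rangle^{-M}\nabla R_Af\|_{L^2}\gtrsim\delta\tau^{-3/4}\|\nabla f\|_{L^2}$.

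With $\tau^{-3/4}$ in place of $\tau^{-1/2}$, your low-energy integrand becomes $\tau^{\frac s2-\frac1r-\frac34}$. This is integrable at $0$ only when $s>\frac2r-\frac12$; for $r=1$ that means $s>\frac32$, not $s>1$. Choosing the exponent because it reproduces the hypothesis is therefore not a verification. The paper's proof asserts the same $\tau^{-1/2}$ rate in \eqref{3.24}--\eqref{3.30}, and \eqref{3.30} fails on the same example.

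\textbf{The high-energy regime.} This is also left open. Your listed transfer bounds give only $\tau^{-1/r}$ for the outer factor and at most $\tau^{-1/2}$ for the inner one; $\tau^{-1}$ from each factor is stated only as a hope. For $\tau\ge1$ you need two bounds (the paper's Steps 3 and 5):
\begin{equation*}
\|(\tau-\Delta_A)^{-1}\|_{L^r\to L^r}\lesssim\tau^{-1},\qquad \|\nabla(\tau-\Delta_A)^{-1}f\|_{L^2}\lesssim\tau^{-1}\|\nabla f\|_{L^2}.
\end{equation*}
Both follow from your own perturbative setup:
\begin{itemize}
\item Since $\|\nabla K(\tau,\cdot)\|_{L^1}\approx\tau^{-1/2}$, you have $\|W(\tau-\Delta)^{-1}\|_{L^r\to L^r}\lesssim\delta(\tau^{-1/2}+\tau^{-1})$. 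So the Neumann series converges on unweighted $L^r$ for $\tau\ge1$, which gives the first bound.
\item Write $\nabla R_Af=R_0\nabla f+\nabla R_0WR_Af$. Here $\|\nabla R_0\|_{L^2\to L^2}\le\frac12\tau^{-1/2}$, and $\|WR_Af\|_{L^2}\lesssim\delta\|\nabla R_Af\|_{L^2}$ by H\"older and $\dot H^1\subset L^6$. For $\tau\ge1$ the second term can be absorbed, which gives the second bound.
\end{itemize}

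\textbf{A false estimate in your list.} The bound $\|\nabla R_Af\|_{L^2}\lesssim\|\nabla f\|_{L^2}$ ``uniformly in $\tau$'' fails as $\tau\to0$. For $A=0$ the ratio is governed by the multiplier $(\tau+|\xi|^2)^{-1}$, so the bound holds only for $\tau\gtrsim1$.
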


\begin{proof}
Let us recall Lemma \ref{lemma2.4}
\begin{equation*}
\begin{split}
V(s)=&c(s)\int^{\infty}_{0}\tau^{\frac{s}{2}}(\tau-\Delta_{A})^{-1}V_{1}(\tau-\Delta_{A})^{-1}d\tau\\
+&ic(s)\int^{\infty}_{0}\tau^{\frac{s}{2}}(\tau-\Delta_{A})^{-1}V_{2}(\tau-\Delta_{A})^{-1}d\tau,
\end{split}
\end{equation*}
where
\begin{equation*}
\begin{split}
&V_{1}:=(2+x\cdot\nabla)|A|^{2}-2i\nabla\cdot A-2ix\cdot(\nabla A)\cdot\nabla -ix\cdot(\nabla\nabla A),\\
&V_{2}:=2A\cdot\nabla+2x\cdot(\nabla A)\cdot\nabla+2(div A)+x\cdot(\nabla\nabla A)-|A|^{2}-x\cdot(\nabla A)\cdot A.
\end{split}
\end{equation*}
Therefore,
\begin{equation*}
\begin{split}	
\|V(s)f\|_{L_{x}^{r}}&\leqslant c(s)\int^{\infty}_{0}\tau^{\frac{s}{2}}\|(\tau-\Delta_{A})^{-1}V_{1}(\tau-\Delta_{A})^{-1}f\|_{L_{x}^{r}}d\tau\\
&+c(s)\int^{\infty}_{0}\tau^{\frac{s}{2}}\|(\tau-\Delta_{A})^{-1}V_{2}(\tau-\Delta_{A})^{-1}f\|_{L_{x}^{r}}d\tau.
\end{split}
\end{equation*}
Since $V_{1}$ and $V_{2}$ have similar items, we can estimate them in the same way. Next we will estimate $\int^{\infty}_{0}\tau^{\frac{s}{2}}\|(\tau-\Delta_{A})^{-1}V_{1}(\tau-\Delta_{A})^{-1}f\|_{L_{x}^{r}}d\tau$ and separate it into two parts:
\begin{flalign}
V_{1}:&=(2+x\cdot\nabla)|A|^{2}-2i\nabla\cdot A-2ix\cdot(\nabla A)\cdot\nabla -ix\cdot(\nabla\nabla A)\nonumber\\
&=((2+x\cdot\nabla)|A|^{2}-ix\cdot(\nabla\nabla A))-(2i\nabla\cdot A+2ix\cdot\nabla A\cdot\nabla)\nonumber\\
&:=\tilde{A}-\tilde{\tilde A}\cdot\nabla.\label{3.11}
\end{flalign}
Furthermore, we continue to separate these two parts into two parts: $\int^{\infty}_{0}=\int^{1}_{0}+\int^{\infty}_{1}$ respectively.\\
When $\tau\in(0,1)$, on the one hand, we deduce that
\begin{equation*}
\begin{split}
&\|(\tau-\Delta_{A})^{-1}\tilde{\tilde A}\cdot\nabla(\tau-\Delta_{A})^{-1}f\|_{L_{x}^{r}}\\
\lesssim &\|(\tau-\Delta_{A})^{-1}\langle x \rangle^{-6}\|_{L^{2}\rightarrow L^{r}}
\cdot\|\langle x \rangle^{6}\tilde{\tilde A}\cdot\nabla(\tau-\Delta_{A})^{-1}f\|_{L^{2}}.
\end{split}
\end{equation*}
$\ \ \ \ \ \ \ \ \ \ \ \ \ $:=\uppercase\expandafter{\romannumeral1}$\cdot$\uppercase\expandafter{\romannumeral2},\\
$Setp\ 1.\ Estimate$ \uppercase\expandafter{\romannumeral1}.\\
According to the resolvent identity, we obtain that
\begin{equation*}
\begin{split}
&\|(\tau-\Delta_{A})^{-1}\langle x \rangle^{-6}f\|_{L^{r}}\\
=&\|R_{0}(I+(|A|^{2}-iA\cdot\nabla-i\nabla\cdot A)R_{0})^{-1}\langle x\rangle^{-6}f\|_{L^{r}}\\
\lesssim&\|R_{0}\langle x\rangle^{-6}\|_{L^{2}\rightarrow L^{r}}\cdot\|\langle x\rangle^{6}(I+A\cdot\nabla R_{0})^{-1}\langle x\rangle^{-6}\|_{L^{2}\rightarrow L^{2}}\cdot\|f\|_{L^{2}},
\end{split}
\end{equation*}
where $R_{0}:=(\tau-\Delta)^{-1}$.\\
Applying the limiting absorption principle (see Theorem 4.1 in \cite{Ag}), we have
\begin{equation}\label{3.12}
R_{0}:L^{2,6}\rightarrow H^{2,-6},\ {\rm for} \  \tau>0.
\end{equation}
Computing directly, we obtain that
\begin{flalign}
\|Af\|_{L^{2,6}}&\leqslant\delta(\int_{R^{3}}\langle x\rangle^{12}\langle x\rangle^{-12\cdot2}|f|^{2}dx)^{\frac{1}{2}}\nonumber\\
&\leqslant\delta\|\langle x\rangle^{12-12}\|_{L^{\infty}}^{\frac{1}{2}}\cdot\|f\|_{L^{2,-6}}\nonumber\\
&\leqslant\delta\|f\|_{L^{2,-6}}.\label{3.13}
\end{flalign}
Therefore, combining \eqref{3.12} and \eqref{3.13}, we obtain that (also can see \cite{MMS})
\begin{equation}\label{3.14}
A\cdot\nabla R_{0}:L^{2,6}\rightarrow L^{2,6}.
\end{equation}
Define operator $T\in B(L^{2,6},L^{2,6})$ for $f\in L^{2,6}$ by
\begin{equation}\label{3.15}
Tf=(|A|^{2}-iA\cdot\nabla-i\nabla\cdot A)R_{0}f,
\end{equation}
then $(I+T)^{-1}$ exists in $B(L^{2,6},L^{2,6})$. Indeed, using the resolvent equation
\begin{equation}\label{3.16}
R_{A}+R_{A}(|A|^{2}-iA\cdot\nabla-i\nabla\cdot A)R_{0}=R_{0},
\end{equation}
and \eqref{3.15}, it follows that for any $f\in L^{2}$ and $g:=(I+T)f$, we have
$$R_{A}(I+T)f=R_{0}f,$$
i.e.
\begin{equation}\label{3.17}
R_{A}g=R_{0}f.
\end{equation}
Letting $f$ vary on $L^{2}$, it follows from \eqref{3.17} that the range of $(I+T)\Subset L^{2}$, which implies that the range of $(I+T)=L^{2,6}$. From this it follows by well know results on compact operators in a Hilbert space (the Fredholm-Riesz theory) that the inverse $(I+T)^{-1}$ exists in $B(L^{2,6},L^{2,6})$, that is to say (also can see the proof of Lemma 3.5 in \cite{KK})
\begin{equation}\label{3.18}
\|\langle x\rangle^{6}(I+A\cdot\nabla R_{0})^{-1}\langle x\rangle^{-6}\|_{L^{2}\rightarrow L^{2}}\leqslant C.
\end{equation}
Furthermore, applying estimate $\|R_{0}\langle x\rangle^{-4}\|_{L^{r}\rightarrow L^{r}}\lesssim\tau^{-\frac{1}{r}}$ for $1\leqslant r\leqslant 2$ from Corollary \ref{corollary3.1} (also can see Lemma 2.2 in \cite{LZ}), then we have
\begin{flalign}
\|R_{0}\langle x\rangle^{-6}\|_{L^{2}\rightarrow L^{r}}&\lesssim\|R_{0}\langle x\rangle^{-4}\|_{L^{r}\rightarrow L^{r}}\cdot\|\langle x\rangle^{-2}\|_{ L^{2}\rightarrow L^{r}}
\nonumber\\
&\lesssim\tau^{-\frac{1}{r}}.\label{3.19}
\end{flalign}
Consequently, according to \eqref{3.18} and \eqref{3.19}, we obtain that
\begin{center}
\uppercase\expandafter{\romannumeral1}:=$\|(\tau-\Delta_{A})^{-1}\langle x \rangle^{-6}\|_{L^{2}\rightarrow L^{r}}\lesssim \tau^{-\frac{1}{r}},$
\end{center}
where $1\leqslant r\leqslant2$.\\
$Setp\ 2.\ Estimate$ \uppercase\expandafter{\romannumeral2}.\\
Applying the estimates in $Setp\ 1$, we claim that
\begin{equation}\label{3.20}
\|A_{j}(\tau-\Delta_{A})^{-1}\partial_{j}f\|_{L^{2}}\leqslant C\tau^{-\frac{1}{2}}\|\partial_{j}f\|_{L^{2}},\ j=1,2,3.
\end{equation}
In fact,
\begin{equation*}
\begin{split}
&\|A_{j}(\tau-\Delta_{A})^{-1}f\|_{L^{2}}\\
\leqslant&\delta\|\langle x\rangle^{-12}R_{0}(I+A\cdot\nabla R_{0})^{-1}f\|_{L^{2}}\\
\leqslant&\delta\|\langle x\rangle^{-12}R_{0}\|_{L^{2}\rightarrow L^{2}}\cdot\|(I+A\cdot\nabla R_{0})^{-1}f\|_{L^{2}}\\
\leqslant&C\tau^{-\frac{1}{2}}\|(I+A\cdot\nabla R_{0})^{-1}f\|_{L^{2}}.
\end{split}
\end{equation*}
And let $(I+A\cdot\nabla R_{0})^{-1}f=g$ i.e. $f=g+A\cdot\nabla R_{0}g$, we have \begin{equation*}
\|g\|_{L^{2}}=\|f-A\cdot\nabla R_{0}g\|_{L^{2}}\leqslant \|f\|_{L^{2}}+\|A\cdot\nabla R_{0}g\|_{L^{2}}.
\end{equation*}
It is obvious that $\|A\cdot\nabla R_{0}g\|_{L^{2}}\leqslant\|A\|_{L^{3}_{x}}\|g\|_{L^{2}}\leqslant\delta\|\langle x\rangle^{-12}\|_{L^{3}_{x}}\cdot\|g\|_{L^{2}}$. Therefore we have $\|g\|_{L^{2}}\leqslant C\|f\|_{L^{2}}$, which implies \eqref{3.20}.\\
In order to estimate \uppercase\expandafter{\romannumeral2}, we need to prove the following inequality:
\begin{equation}\label{3.21}
\|A_{j}[\partial_{j},(\tau-\Delta_{A})^{-1}]f\|_{L^{2}}\leqslant C\tau^{-\frac{1}{2}}\|\partial_{j}f\|_{L^{2}}.
\end{equation}
In fact,
\begin{equation*}
\begin{split}
&[\partial_{j},(\tau-\Delta_{A})^{-1}]f\\
=&(\partial_{j}(\tau-\Delta_{A})^{-1}-(\tau-\Delta_{A})^{-1}\partial_{j})f\\
=&(\tau-\Delta_{A})^{-1}((\tau-\Delta_{A})\partial_{j}-\partial_{j}(\tau-\Delta_{A}))(\tau-\Delta_{A})^{-1}f\\
=&(\tau-\Delta_{A})^{-1}(2i(\partial_{j}A_{j}\cdot\partial_{j})+i(\partial_{jj}^{2}A_{j})-(\partial_{j}|A|^{2}))(\tau-\Delta_{A})^{-1}f.
\end{split}
\end{equation*}
Therefore,
\begin{flalign}
&\|A_{j}\partial_{j}(\tau-\Delta_{A})^{-1}f\|_{L^{2}}\nonumber\\
\leqslant&\|A_{j}(\tau-\Delta_{A})^{-1}\partial_{j}f\|_{L^{2}}+\|A_{j}[\partial_{j},(\tau-\Delta_{A})^{-1}]f\|_{L^{2}}\nonumber\\
\leqslant&C\tau^{-\frac{1}{2}}\|\partial_{j}f\|_{L^{2}}+\|A_{j}(\tau-\Delta_{A})^{-1}(2i(\partial_{j}A_{j})\cdot\partial_{j}+i(\partial_{jj}^{2}A_{j})\nonumber\\
&-(\partial_{j}|A|^{2}))(\tau-\Delta_{A})^{-1}f\|_{L^{2}}\nonumber\\
\leqslant&C\tau^{-\frac{1}{2}}\|\partial_{j}f\|_{L^{2}}+\|A_{j}(\tau-\Delta_{A})^{-1}\cdot2i(\partial_{j}A_{j})\cdot\partial_{j}(\tau-\Delta_{A})^{-1}f\|_{L^{2}}\nonumber\\
&+\|A_{j}(\tau-\Delta_{A})^{-1}(i(\partial_{jj}^{2}A_{j})-(\partial_{j}|A|^{2}))(\tau-\Delta_{A})^{-1}f\|_{L^{2}}\nonumber\\
:=&C\tau^{-\frac{1}{2}}\|\partial_{j}f\|_{L^{2}}+\textcircled{1}+\textcircled{2}.\label{3.22}
\end{flalign}
Next we will estimate $\textcircled{1}$ and $\textcircled{2}$ respectively. For $\textcircled{1}$ we obtain that
\begin{flalign}
&\|A_{j}(\tau-\Delta_{A})^{-1}\cdot2i(\partial_{j}A_{j})\cdot\partial_{j}(\tau-\Delta_{A})^{-1}f\|_{L^{2}}\nonumber\\
\leqslant&\delta^{2}\|\langle x\rangle^{-\frac{13}{2}}\cdot\langle x\rangle^{-\frac{11}{2}}R_{A}\langle x\rangle^{-\frac{11}{2}}\cdot\langle x\rangle^{-\frac{13}{2}}\cdot\partial_{j}(\tau-\Delta_{A})^{-1}f\|_{L^{2}}\nonumber\\
\leqslant&\delta^{2}\|\langle x\rangle^{-\frac{13}{2}}\|_{L^{\infty}}\cdot\|R_{A}\|_{L^{2,\frac{11}{2}}\rightarrow L ^{2,-\frac{11}{2}}}\cdot\|\langle x\rangle^{-\frac{13}{2}}\partial_{j}(\tau-\Delta_{A})^{-1}f\|_{L^{2}}\nonumber\\
\leqslant&\delta^{2}\|\langle x\rangle^{-\frac{13}{2}}\partial_{j}(\tau-\Delta_{A})^{-1}f\|_{L^{2}}\nonumber\\
\leqslant&C\tau^{-\frac{1}{2}}\|\partial_{j}f\|_{L^{2}}+\delta^{2}\textcircled{1}+\delta^{2}\textcircled{2}, \label{3.23}
\end{flalign}
and for $\textcircled{2}$
\begin{flalign}
&\|A_{j}(\tau-\Delta_{A})^{-1}(i(\partial_{jj}^{2}A_{j})-(\partial_{j}|A|^{2}))(\tau-\Delta_{A})^{-1}f\|_{L^{2}}\nonumber\\
\leqslant&\delta^{2}\|R_{A}\|_{L^{2,\frac{11}{2}}-L^{2,-\frac{11}{2}}}\cdot\|\langle x\rangle^{-\frac{13}{2}}(\tau-\Delta_{A})^{-1}f\|_{L^{2}}\nonumber\\
\leqslant&\delta^{2}\|\langle x\rangle^{-\frac{13}{4}}(\tau-\Delta_{A})^{-1}\langle x\rangle^{-\frac{13}{4}}f\|_{L^{2}}\nonumber\\
+&\delta^{2}\|\langle x\rangle^{-\frac{13}{4}}\|[\langle x\rangle^{-\frac{13}{4}},(\tau-\Delta_{A})^{-1}]f\|_{L^{2}}\nonumber\\
\leqslant&C\tau^{-\frac{1}{2}}\|\langle x\rangle^{-\frac{13}{4}}f\|_{L^{2}}+\delta^{2}\|\langle x\rangle^{-\frac{13}{4}}(\tau-\Delta_{A})^{-1}(A'\cdot\nabla+A'')(\tau-\Delta_{A})^{-1}f\|_{L^{2}}\nonumber\\
\leqslant&C\tau^{-\frac{1}{2}}\|\nabla f\|_{L^{2}}+\delta^{2}\textcircled{1}+\delta^{2}\textcircled{2},\label{3.24}
\end{flalign}
where $|A'|<|A|$ and $|A''|<|A|$.\\
Let us plus \eqref{3.23} and \eqref{3.24} as the following that
\begin{equation}\label{3.25}
\textcircled{1}+\textcircled{2}\leqslant C\tau^{-\frac{1}{2}}\|\nabla f\|_{L^{2}}+\delta^{2}\textcircled{1}+\delta^{2}\textcircled{2},
\end{equation}
hence, we have
\begin{equation}\label{3.26}
\textcircled{1}+\textcircled{2}\leqslant C\tau^{-\frac{1}{2}}\|\nabla f\|_{L^{2}}.
\end{equation}
According to \eqref{3.22} and \eqref{3.26}, we obtain that
\begin{equation}\label{3.27}
\|A_{j}\partial_{j}(\tau-\Delta_{A})^{-1}f\|_{L^{2}}\leqslant C\tau^{-\frac{1}{2}}\|\nabla f\|_{L^{2}}.
\end{equation}
Therefore,
\begin{center}
\uppercase\expandafter{\romannumeral2}:=$\|\langle x \rangle^{6}\tilde{\tilde A}\cdot\nabla(\tau-\Delta_{A})^{-1} f\|_{L^{2}}\lesssim \tau^{-\frac{1}{2}}\|\nabla f\|_{L^{2}}:=\tau^{-\frac{1}{2}}\|f\|_{\dot{H}^{1}_{x}}.$
\end{center}
Consequently, we obtain from $Setp\ 1$ and  $Setp\ 2$ that for $\tau\in(0,1)$
\begin{equation}\label{3.28}
\|(\tau-\Delta_{A})^{-1}\tilde{\tilde A}\cdot\nabla(\tau-\Delta_{A})^{-1}f\|_{L_{x}^{r}}\lesssim\tau^{-(\frac{1}{2}+\frac{1}{r})}\|f\|_{\dot{H}^{1}_{x}}, 1\leqslant r\leqslant2.
\end{equation}
On the other hand, we have
\begin{flalign}
&\|(\tau-\Delta_{A})^{-1}\tilde{A}(\tau-\Delta_{A})^{-1}f\|_{L_{x}^{r}}\nonumber\\
\leqslant&\delta\|(\tau-\Delta_{A})^{-1}\langle x\rangle^{-12}(\tau-\Delta_{A})^{-1}f\|_{L_{x}^{r}}\nonumber\\
\leqslant&\delta\|R_{0}\langle x\rangle^{-6}\|_{L^{2}\rightarrow L^{r}}\cdot\|\langle x\rangle^{6}(I+A\cdot\nabla R_{0})^{-1}\langle x\rangle^{-6}\|_{L^{2}\rightarrow L^{2}}\nonumber\\
&\cdot\|\langle x\rangle^{-6}(\tau-\Delta_{A})^{-1}f\|_{L_{x}^{2}}\nonumber\\
\leqslant&C\delta\tau^{-\frac{1}{r}}\|\langle x\rangle^{-6}(\tau-\Delta_{A})^{-1}f\|_{L_{x}^{2}}.\label{3.29}
\end{flalign}
Applying the proof in \eqref{3.24} to \eqref{3.29}, we obtain that
\begin{flalign}\label{3.30}
\|\langle x\rangle^{-6}(\tau-\Delta_{A})^{-1}f\|_{L_{x}^{2}}\leqslant C\tau^{-\frac{1}{2}}\|\nabla f\|_{L^{2}_{x}}.
\end{flalign}
Combining \eqref{3.29} and \eqref{3.30},
\begin{flalign}\label{3.31}
\tau^{-\frac{1}{r}}\|\langle x\rangle^{-6}(\tau-\Delta_{A})^{-1}f\|_{L_{x}^{2}}\lesssim\tau^{-(\frac{1}{r}+\frac{1}{2})}\|f\|_{\dot{H}^{1}}.
\end{flalign}
Therefore, from  \eqref{3.30} and \eqref{3.31}, it follows that for $\tau\in(0,1)$
\begin{equation}\label{3.32}
\|(\tau-\Delta_{A})^{-1}\tilde{A}(\tau-\Delta_{A})^{-1}f\|_{L_{x}^{r}}\lesssim\tau^{-(\frac{1}{r}+\frac{1}{2})}\|f\|_{\dot{H}^{1}}.
\end{equation}
When $\tau\in(1,\infty)$, On the one hand,
\begin{equation*}
\begin{split}
&\|(\tau-\Delta_{A})^{-1}\tilde{\tilde A}\cdot\nabla(\tau-\Delta_{A})^{-1}f\|_{L_{x}^{r}}\\
\lesssim &\|(\tau-\Delta_{A})^{-1}\|_{L^{r}\rightarrow L^{r}}\cdot\| \tilde{\tilde A}\|_{L^{2}\rightarrow L^{r}}\cdot\|\nabla(\tau-\Delta_{A})^{-1}\|_{\dot{H}^{1}\rightarrow L^{2}}\cdot\|\nabla f\|_{L^{2}}\\
\end{split}
\end{equation*}
$\ \ \ \ \ \ \  $:=\uppercase\expandafter{\romannumeral3}$\cdot$\uppercase\expandafter{\romannumeral4}$\cdot$\uppercase\expandafter{\romannumeral5}$\cdot\|\nabla f\|_{L^{2}}.$\\
$Setp\ 3.\ Estimate$ \uppercase\expandafter{\romannumeral3}.\\
In fact, for $\tau>0$ and $1\leqslant r< \infty$, from Lemma \ref{lemma3.1} (see also Lemma 2.2 in \cite{LZ}), we have the following estimates
\begin{flalign}
\|(\tau-\Delta)^{-1}f\|_{L^{r}}\lesssim \tau^{-1}\|f\|_{L^{r}}.\label{3.33}
\end{flalign}
Using the interpolation theorem, we have
\begin{flalign}
\|(\tau-\Delta)f\|_{L^{r}}\lesssim \|(\tau-\Delta_{A})f\|_{L^{r}},\,\, {\rm for}\  \tau>1.\label{3.34}
\end{flalign}
Hence, from \eqref{3.33} and \eqref{3.34}, it is obtained that
\begin{center}
\uppercase\expandafter{\romannumeral3}:=$\|(\tau-\Delta_{A})^{-1}\|_{L^{r}\rightarrow L^{r}}\lesssim \tau^{-1}.$
\end{center}
$Setp\ 4.\ Estimate$ \uppercase\expandafter{\romannumeral5}.\\
For $1\leqslant r\leqslant2$,
\begin{equation*}
\begin{split}
\|\tilde{\tilde A}f\|_{L^{r}}&\lesssim (\int_{R^{3}}\langle x\rangle^{-12r}|f|^{r}dx)^{\frac{1}{r}}\\
&\lesssim \|f\|_{L^{2}}\cdot(\int_{R^{3}}\langle x\rangle^{-\frac{24r}{2-r}}dx)^{\frac{1}{r}\cdot\frac{2-r}{2}}\\
&\lesssim \|f\|_{L^{2}}.
\end{split}
\end{equation*}
Therefore,
\begin{center}
\uppercase\expandafter{\romannumeral4}:=$\|\tilde{\tilde A}\|_{L^{2}\rightarrow L^{r}}\leqslant C.
$\end{center}
$Setp\ 5.\ Estimate$ \uppercase\expandafter{\romannumeral5}.\\
From estimate \eqref{3.33}, it is obtained that
\begin{equation}\label{3.35}
\|\nabla(\tau-\Delta)^{-1}f\|_{L^{2}}=\|(\tau-\Delta)^{-1}\nabla f\|_{L^{2}}\lesssim \tau^{-1}\|\nabla f\|_{L^{2}}.
\end{equation}
Furthermore,
\begin{equation*}
\begin{split}
&\|(\tau-\Delta_{A})(\nabla f)\|_{L^{2}}\\
=&\|(\tau-\Delta)(\nabla f)-iA\cdot \nabla (\nabla f)+(|A|^{2}-i\nabla\cdot A)(\nabla f)\|_{L^{2}}\\
\geqslant&\|(\tau-\Delta)(\nabla f)\|_{L^{2}}-C\|A\cdot \nabla (\nabla f)\|_{L^{2}},
\end{split}
\end{equation*}
and
$$\|A\cdot \nabla (\nabla f)\|_{L^{2}}\leqslant C\|(-\Delta)(\nabla f)\|_{L^{2}}+\dfrac{1}{C}\|\nabla f\|_{L^{2}}.$$
Hence, we have
\begin{equation}\label{3.36}
\|(\tau-\Delta)(\nabla f)\|_{L^{2}}\leqslant C\|(\tau-\Delta_{A})(\nabla f)\|_{L^{2}}.
\end{equation}
According to \eqref{3.35} and \eqref{3.36}, we obtain that
\begin{equation*}
\|\nabla f\|_{L^{2}}\lesssim\tau^{-1}\|(\tau-\Delta)(\nabla f)\|_{L^{2}}\lesssim\tau^{-1}\|(\tau-\Delta_{A})(\nabla f)\|_{L^{2}},
\end{equation*}
and applying a similar way of $Setp\ 2$, we have
\begin{equation*}\begin{split}
&\|\nabla(\tau-\Delta_{A})^{-1}f\|_{L^{2}}\\
\leqslant&C\tau^{-1}\| (\tau-\Delta_{A})\nabla(\tau-\Delta_{A})^{-1}f\|_{L^{2}}\\
\leqslant&C\tau^{-1}\|\nabla f\|_{L^{2}}+\tau^{-1}\|(\tau-\Delta_{A})[\nabla,(\tau-\Delta_{A})^{-1}]f\|_{L^{2}}\\
\leqslant&C\tau^{-1}\|\nabla f\|_{L^{2}}+\tau^{-1}\|(A'\cdot\nabla+A'') (\tau-\Delta_{A})^{-1}f\|_{L^{2}}\\
\leqslant&C\tau^{-1}\|\nabla f\|_{L^{2}}.
\end{split}
\end{equation*}
Hence,
\begin{center}\uppercase\expandafter{\romannumeral5}:=$\|\nabla(\tau-\Delta_{A})^{-1}\|_{\dot{H}^{1}\rightarrow L^{2}}\lesssim\tau^{-1}.$
\end{center}
Consequently, we obtain from $Setp\ 3$-$Setp\ 5$ that for $\tau\in(1,\infty),$
\begin{equation}\label{3.37}
\|(\tau-\Delta_{A})^{-1}\tilde{\tilde A}\cdot\nabla(\tau-\Delta_{A})^{-1}f\|_{L_{x}^{r}}\lesssim\tau^{-2}\|f\|_{\dot{H}^{1}_{x}}, 1\leqslant r\leqslant 2.
\end{equation}
On the other hand, we have
\begin{flalign}
&\|(\tau-\Delta_{A})^{-1}\tilde{A}(\tau-\Delta_{A})^{-1}f\|_{L_{x}^{r}}\nonumber\\
\lesssim&\|(\tau-\Delta_{A})^{-1}\langle x\rangle^{-12}(\tau-\Delta_{A})^{-1}f\|_{L_{x}^{r}}\nonumber\\
\lesssim&\tau^{-1}\|\langle x\rangle^{-12}(\tau-\Delta_{A})^{-1}f\|_{L_{x}^{r}}\nonumber\\
\lesssim&\tau^{-1}\|\langle x\rangle^{-12}\|_{L^{\frac{6r}{6-r}}}\cdot\|(\tau-\Delta_{A})^{-1}f\|_{L_{x}^{r}}\nonumber\\
\lesssim&\tau^{-1}\|\nabla(\tau-\Delta_{A})^{-1}f\|_{L_{x}^{r}}\nonumber\\
\lesssim&\tau^{-2}\|f\|_{\dot{H}^{1}_{x}}.\label{3.38}
\end{flalign}
Therefore, according to \eqref{3.28}, \eqref{3.32}, \eqref{3.37} and \eqref{3.38}, we obtain that
\begin{equation*}
\begin{split}
\|V(s)f\|_{L^{r}_{x}}\leqslant& C\int^{1}_{0}\tau^{\frac{s}{2}}\|(\tau-\Delta_{A})^{-1}(\tilde{A}-\tilde{\tilde A}\cdot\nabla)(\tau-\Delta_{A})^{-1}f\|_{L_{x}^{r}}d\tau\\
+&C'\int^{\infty}_{1}\tau^{\frac{s}{2}}\|(\tau-\Delta_{A})^{-1}(\tilde{A}-\tilde{\tilde A}\cdot\nabla)(\tau-\Delta_{A})^{-1}f\|_{L_{x}^{r}}d\tau\\
\leqslant& C\int^{1}_{0}\tau^{\frac{s}{2}-\frac{1}{2}-\frac{1}{r}}\|f\|_{\dot{H}^{1}_{x}}d\tau+C'\int^{\infty}_{1}\tau^{\frac{s}{2}-2}\|f\|_{\dot{H}^{1}_{x}}d\tau\\
\leqslant& C\|f\|_{\dot{H}^{1}_{x}}+C'\|f\|_{\dot{H}^{1}_{x}}\\
\lesssim&\|f\|_{\dot{H}^{1}_{x}},
\end{split}
\end{equation*}
where $1\leqslant r\leqslant 2$,  $\frac{2}{r}-1<s<2$. This completes the proof of Lemma \ref{lemma3.2}.
\end{proof}

\section{\bf Proof of Theorem \ref{theorem1.1}}

\begin{lemma}\label{lemma4.1}
(\textquotedblleft Almost equivalence\textquotedblright)\\
Let $A(x)$ satisfy hypothesis
$(\textbf H)$, then for $1\leqslant r\leqslant2$ and $0<s<2$, we have
\begin{equation}\label{4.1}
\|(-\Delta_{A})^{\frac{s}{2}}u-(-\Delta)^{\frac{s}{2}}u\|_{L^{r}_{x}(\mathbb{R}^{3})}\leqslant C\|u\|_{\dot{H}^{1}_{x}(\mathbb{R}^{3})}.
\end{equation}		
\end{lemma}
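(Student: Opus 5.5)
Following the strategy of Lemma \ref{lemma3.2}, I would start from the Balakrishnan formula \eqref{2.18}, which holds for both $-\Delta_{A}$ and $-\Delta$ with the same constant $c(s)$, $0<s<2$. Using $(-\Delta_{A})(\tau-\Delta_{A})^{-1}=I-\tau(\tau-\Delta_{A})^{-1}$ and the analogous identity for $-\Delta$, the identity terms cancel in the difference, and the resolvent identity $R_{A}-R_{0}=-R_{A}WR_{0}$ with $W:=|A|^{2}-iA\cdot\nabla-i\nabla\cdot A$ (compare \eqref{3.16}) gives
\begin{equation*}
(-\Delta_{A})^{\frac{s}{2}}u-(-\Delta)^{\frac{s}{2}}u=c(s)\int_{0}^{\infty}\tau^{\frac{s}{2}}(\tau-\Delta_{A})^{-1}W(\tau-\Delta)^{-1}u\,d\tau .
\end{equation*}
This has exactly the structure of the integrand in Lemma \ref{lemma2.4}: a magnetic resolvent, then a rapidly decaying first-order operator, then a resolvent. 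The only change is that the right resolvent is now free. I would split $W=\tilde{A}-\tilde{\tilde A}\cdot\nabla$, with $\tilde A=|A|^{2}-i\nabla\cdot A$ and $\tilde{\tilde A}=iA$. Both coefficients are bounded by $\delta\langle x\rangle^{-12}$ by $(\textbf H)$. I would then estimate the $L^{r}$ norm of the integrand separately on $\tau\in(0,1)$ and $\tau\in(1,\infty)$.

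For $\tau\in(0,1)$ I would repeat Steps 1--2 of the proof of Lemma \ref{lemma3.2}. Step 1 gives $\|(\tau-\Delta_{A})^{-1}\langle x\rangle^{-6}\|_{L^{2}\to L^{r}}\lesssim\tau^{-\frac1r}$. For the derivative term I need $\|\langle x\rangle^{6}A\cdot\nabla(\tau-\Delta)^{-1}u\|_{L^{2}}\lesssim\tau^{-\frac12}\|\nabla u\|_{L^{2}}$. This is easier than \eqref{3.27}, because $\nabla$ commutes with $R_{0}$: the quantity is $\|\langle x\rangle^{-6}R_{0}\nabla u\|_{L^{2}}$, and \eqref{3.3} bounds it by $\tau^{-\frac12}\|\nabla u\|_{L^{2}}$. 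For the $\tilde A$ term I need $\|\langle x\rangle^{-6}R_{0}u\|_{L^{2}}\lesssim\tau^{-\frac12}\|\nabla u\|_{L^{2}}$. I would obtain it as in \eqref{3.30}, or more directly via Hardy: $\|\langle x\rangle^{-6}R_{0}u\|_{L^{2}}\le\||x|^{-1}R_{0}u\|_{L^{2}}\lesssim\|\nabla R_{0}u\|_{L^{2}}\lesssim\tau^{-1}\|\nabla u\|_{L^{2}}$, and then interpolate with the weighted bound. Together these give the bound $\tau^{-\frac1r-\frac12}\|u\|_{\dot H^{1}}$, so the low-energy piece is $\int_{0}^{1}\tau^{\frac s2-\frac12-\frac1r}\,d\tau$.

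For $\tau\in(1,\infty)$ I would use Steps 3--5: $\|(\tau-\Delta_{A})^{-1}\|_{L^{r}\to L^{r}}\lesssim\tau^{-1}$, $\tilde{\tilde A}:L^{2}\to L^{r}$ bounded, and now simply $\|\nabla(\tau-\Delta)^{-1}u\|_{L^{2}}\le\tau^{-1}\|\nabla u\|_{L^{2}}$ by \eqref{3.35}. The zeroth-order term is handled as in \eqref{3.38}, using Hölder and Sobolev. This yields $\tau^{-2}\|u\|_{\dot H^{1}}$, which is integrable against $\tau^{\frac s2}$ at infinity since $s<2$.

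The main obstacle is the low-energy integral. Convergence of $\int_{0}^{1}\tau^{\frac s2-\frac12-\frac1r}d\tau$ requires $s>\frac2r-1$, which is not automatic for all $0<s<2$ and $1\le r\le2$. To cover the stated range I would try to improve the small-$\tau$ decay. The first option is to bound the left factor by $\|R_{A}\langle x\rangle^{-6}\|_{L^{2}\to L^{r}}$ uniformly in $\tau$, using the limiting absorption principle \eqref{3.12} and the weighted bound \eqref{3.18} in place of the $\tau^{-\frac1r}$ estimate. The second is to use the $\tau^{-1}$ Hardy bound above for the right factor. If neither suffices, I would accept the restriction $\frac2r-1<s<2$ inherited from Lemma \ref{lemma3.2}, which is what is used later.
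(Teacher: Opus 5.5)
You follow the paper's route. Your identity is exactly \eqref{4.2}, and the paper's proof consists only of \eqref{4.2} plus the sentence ``applying the similar way from the proof of Lemma \ref{lemma3.2}''. You correctly locate the weak point at $\tau\to0$, which the paper passes over. But your proposal does not close that gap, and the problem is worse than you estimate. First, neither remedy works. A $\tau$-uniform bound for $(\tau-\Delta_A)^{-1}\langle x\rangle^{-6}:L^2\to L^r$ fails for every $r<3$. Already for $A=0$ and $g\geqslant0$, since $K(\tau,\cdot)>0$ is radially decreasing, $(\tau-\Delta)^{-1}(\langle x\rangle^{-6}g)(x)\geqslant K(\tau,|x|+1)\int_{|y|<1}\langle y\rangle^{-6}g(y)\,dy$, and $\|K(\tau,\cdot)\|_{L^r}\sim\tau^{\frac12-\frac3{2r}}\to\infty$. \eqref{3.12} controls only weighted $L^2$ norms, not this tail at $|x|\sim\tau^{-1/2}$. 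The Hardy bound $\tau^{-1}\|\nabla u\|_{L^2}$ is worse, not better, as $\tau\to0$. Second, and more seriously, the zeroth-order estimate you rely on, $\|\langle x\rangle^{-6}(\tau-\Delta)^{-1}u\|_{L^2}\lesssim\tau^{-1/2}\|\nabla u\|_{L^2}$ (the paper's \eqref{3.30}), is false. Take $u(x)=\phi(\sqrt\tau x)$ with $0\leqslant\phi\in C_0^\infty$, $\phi\not\equiv0$. Then $(\tau-\Delta)^{-1}u=\tau^{-1}w(\sqrt\tau\,\cdot)$ with $w=(1-\Delta)^{-1}\phi>0$, so the left side is $\approx\tau^{-1}w(0)\|\langle x\rangle^{-6}\|_{L^2}$, whereas $\|\nabla u\|_{L^2}=\tau^{-1/4}\|\nabla\phi\|_{L^2}$. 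The sharp rate is $\tau^{-3/4}$, from $\|K(\tau,\cdot)\|_{L^{6/5}}\sim\tau^{-3/4}$ and $\dot H^1\subset L^6$. No interpolation of Hardy with \eqref{3.3}, which involves $\|u\|_{L^2}$, can give $\tau^{-1/2}\|\nabla u\|_{L^2}$. With your left factor $\tau^{-1/r}$, the low-energy integrand is therefore $\tau^{\frac s2-\frac1r-\frac34}$. So your bounds give only $s>\frac2r-\frac12$, not even the fallback range $s>\frac2r-1$.

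In fact no argument reaches the full range. \eqref{4.1} fails for $s<\frac3r-\frac32$ once the magnetic field is nonzero, so even your fallback range is too large (e.g.\ $r=1$, $1<s<\frac32$). Sketch: take $A=\delta A_0$ with $\nabla\times A_0\not\equiv0$, let $\psi$ solve $\Delta_A\psi=0$ with $\psi\to1$ at infinity (it exists since $0$ is not a resonance), and put $h:=-\Delta_A1=|A|^2-i\,\mathrm{div}A$. For $\tau\to0$ and $|x|\gg1$ one has $(\tau-\Delta_A)^{-1}g(x)\approx K(\tau,x)\int g(y)\overline{\psi(y)}\,dy$. Insert this into \eqref{4.2} with $u_\lambda(x)=\phi(\lambda x)$, $\phi(0)=1$. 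This gives $(-\Delta_A)^{\frac s2}u_\lambda-(-\Delta)^{\frac s2}u_\lambda\approx C_sQ|x|^{-1-s}$ on $1\ll|x|\ll\lambda^{-1}$, with $C_s\neq0$ and $Q=\int h\bar\psi\,dy=\|A\|_{L^2}^2-\|(-\Delta)^{-1/2}\mathrm{div}A\|_{L^2}^2+O(\delta^3)$. The leading term of $Q$ is $\delta^2$ times the squared $L^2$ norm of the divergence-free part of $A_0$, so $Q\neq0$ for small $\delta$. Hence the left side of \eqref{4.1} is $\gtrsim\lambda^{1+s-\frac3r}$, while $\|u_\lambda\|_{\dot H^1}=\lambda^{-\frac12}\|\phi\|_{\dot H^1}$, and the ratio $\lambda^{s+\frac32-\frac3r}$ blows up as $\lambda\to0$. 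What your approach does prove, once the two low-energy rates are corrected, is \eqref{4.1} for $\max(0,\frac3r-\frac32)<s<2$. By the diamagnetic inequality $|(\tau-\Delta_A)^{-1}g|\leqslant(\tau-\Delta)^{-1}|g|$, for $\tau<1$ one gets $\|(\tau-\Delta_A)^{-1}W(\tau-\Delta)^{-1}u\|_{L^r}\leqslant\|K(\tau,\cdot)\|_{L^r}\|W(\tau-\Delta)^{-1}u\|_{L^1}\lesssim\tau^{-\frac14-\frac3{2r}}\|\nabla u\|_{L^2}$. Your high-energy part is fine. This range includes $r=2$ with all $0<s<2$, which is the case in which \eqref{4.1} is used in Lemmas \ref{lemma4.2} and \ref{lemma4.4}.
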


\begin{proof}
Applying representation \eqref{2.18} in Lemma \ref{lemma2.4}, we deduce that
\begin{equation*}
\begin{split}
&(-\Delta_{A})^{\frac{s}{2}}u\\
=&c(s)(-\Delta_{A})\int^{\infty}_{0}\tau^{\frac{s}{2}-1}(\tau-\Delta_{A})^{-1}ud\tau\\
=&c(s)(-\Delta_{A})\int^{\infty}_{0}\tau^{\frac{s}{2}-1}((\tau-\Delta_{A})^{-1}-(\tau-\Delta)^{-1})ud\tau\\
&+c(s)(-\Delta_{A})\int^{\infty}_{0}\tau^{\frac{s}{2}-1}(\tau-\Delta)^{-1}ud\tau\\
=&c(s)\Delta_{A}\int^{\infty}_{0}\tau^{\frac{s}{2}-1}(\tau-\Delta_{A})^{-1}(|A|^{2}-iA\cdot\nabla-i\nabla\cdot A)(\tau-\Delta)^{-1}ud\tau\\
&+c(s)(-\Delta_{A})\int^{\infty}_{0}\tau^{\frac{s}{2}-1}(\tau-\Delta)^{-1}ud\tau\\
=&-c(s)\int^{\infty}_{0}\tau^{\frac{s}{2}-1}(\tau-\Delta_{A})(\tau-\Delta_{A})^{-1}(|A|^{2}-iA\cdot\nabla-i\nabla\cdot A)(\tau-\Delta)^{-1}ud\tau\\
\end{split}
\end{equation*}	
\begin{equation*}
\begin{split}
&+c(s)(-\Delta_{A})\int^{\infty}_{0}\tau^{\frac{s}{2}-1}(\tau-\Delta)^{-1}ud\tau\\
&+c(s)\int^{\infty}_{0}\tau^{\frac{s}{2}}(\tau-\Delta_{A})^{-1}(|A|^{2}-iA\cdot\nabla-i\nabla\cdot A)(\tau-\Delta)^{-1}ud\tau\\=&-c(s)\int^{\infty}_{0}\tau^{\frac{s}{2}-1}(|A|^{2}-iA\cdot\nabla-i\nabla\cdot A)(\tau-\Delta)^{-1}ud\tau\\
&+c(s)\int^{\infty}_{0}\tau^{\frac{s}{2}}(\tau-\Delta_{A})^{-1}(|A|^{2}-iA\cdot\nabla-i\nabla\cdot A)(\tau-\Delta)^{-1}ud\tau\\
&+c(s)(-\Delta)\int^{\infty}_{0}\tau^{\frac{s}{2}-1}(\tau-\Delta)^{-1}ud\tau\\
&+c(s)\int^{\infty}_{0}\tau^{\frac{s}{2}-1}(|A|^{2}-iA\cdot\nabla-i\nabla\cdot A)(\tau-\Delta)^{-1}ud\tau\\
=&(-\Delta)^{\frac{s}{2}}u+c(s)\int^{\infty}_{0}\tau^{\frac{s}{2}}(\tau-\Delta_{A})^{-1}(|A|^{2}-iA\cdot\nabla-i\nabla\cdot A)(\tau-\Delta)^{-1}ud\tau.
\end{split}
\end{equation*}	
That is,
\begin{equation}\label{4.2}
(-\Delta_{A})^{\frac{s}{2}}u-(-\Delta)^{\frac{s}{2}}u=c(s)\int^{\infty}_{0}\tau^{\frac{s}{2}}(\tau-\Delta_{A})^{-1}V_{3}(x)(\tau-\Delta)^{-1}ud\tau,
\end{equation}
where $V_{3}(x):=|A|^{2}-iA\cdot\nabla-i\nabla\cdot A$ and  $(c(s))^{-1}=\int^{\infty}_{0}\tau^{\frac{s}{2}-1}(\tau+1)^{-1}d\tau$ for every $0<s<2$.\\
Applying the similar way from the proof of Lemma \ref{lemma3.2}, we obtain that
\begin{equation*}
\begin{split}
&\|(-\Delta_{A})^{\frac{s}{2}}u-(-\Delta)^{\frac{s}{2}}u\|_{L^{r}_{x}(\mathbb{R}^{3})}\\
\leqslant& c(s)\int^{\infty}_{0}\tau^{\frac{s}{2}}\|(\tau-\Delta_{A})^{-1}V_{3}(x)(\tau-\Delta)^{-1}u\|_{L^{r}_{x}(\mathbb{R}^{3})}d\tau\\
\leqslant& C\|u\|_{\dot{H}^{1}_{x}(\mathbb{R}^{3})},
\end{split}
\end{equation*}	
where $0<s<2$ and $1\leqslant r\leqslant 2$. This implies estimate \eqref{4.1}.
\end{proof}

\begin{lemma}\label{lemma4.2}
Let $|J|^{s}$ and $|J_{A}|^{s}$ as definition of \eqref{2.10} and \eqref{2.11} respectively. Then for $\forall\ \varepsilon\in(0,1)$ and $s\in(0,2)$, we have
\begin{equation}\label{4.3}
\||J_{A}|^{s}u\|_{L^{2}_{x}(\mathbb{R}^{3})}\leqslant Ct^{s+\varepsilon-\frac{3}{2}}(\||J|^{\frac{3}{2}-\varepsilon}u\|_{L^{2}_{x}(\mathbb{R}^{3})}+\||J|^{s}u\|_{L^{2}_{x}(\mathbb{R}^{3})}),
\end{equation}
\begin{equation}\label{4.4}
\||J|^{s}u\|_{L^{2}_{x}(\mathbb{R}^{3})}\leqslant Ct^{s+\varepsilon-\frac{3}{2}}(\||J_{A}|^{\frac{3}{2}-\varepsilon}u\|_{L^{2}_{x}(\mathbb{R}^{3})}+\||J_{A}|^{s}u\|_{L^{2}_{x}(\mathbb{R}^{3})}).
\end{equation}
\end{lemma}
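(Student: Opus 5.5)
Since $M(t)$ is unitary on $L^{2}$, we will begin by removing it from both sides. Put $v:=M(-t)u$. Then
\[
\||J_{A}|^{s}u\|_{L^{2}}=t^{s}\|(-\Delta_{A})^{\frac{s}{2}}v\|_{L^{2}},\qquad \||J|^{s}u\|_{L^{2}}=t^{s}\|(-\Delta)^{\frac{s}{2}}v\|_{L^{2}},
\]
and analogous formulas hold with the exponent $\frac32-\varepsilon$ in place of $s$. The inequality \eqref{4.3} is therefore equivalent to a purely spatial, $t$-independent statement with the factor $t^{s}$ carried along. We write
\[
(-\Delta_{A})^{\frac{s}{2}}v=(-\Delta)^{\frac{s}{2}}v+\big((-\Delta_{A})^{\frac{s}{2}}-(-\Delta)^{\frac{s}{2}}\big)v .
\]
The first term gives $t^{s}\|(-\Delta)^{\frac s2}v\|_{L^2}=\||J|^{s}u\|_{L^{2}}$. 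Because $t\geqslant1$ and $s+\varepsilon-\frac32\geqslant s-\frac32$, we have $1\leqslant t^{\,s+\varepsilon-\frac32}\cdot t^{\frac32-\varepsilon}$, so this term fits inside the right-hand side of \eqref{4.3} after we adjust the powers of $t$ as below. To keep the bookkeeping consistent, the plan is to compare everything at the level of $t^{s}\times$(spatial norm of $v$) and to use $t\geqslant 1$ whenever we need to raise a power of $t$.

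The difference term is handled by Lemma \ref{lemma4.1} with $r=2$:
\[
\|((-\Delta_{A})^{\frac{s}{2}}-(-\Delta)^{\frac{s}{2}})v\|_{L^{2}}\leqslant C\|v\|_{\dot H^{1}}.
\]
Next we interpolate $\|v\|_{\dot H^{1}}$ between $\dot H^{\frac32-\varepsilon}$ and $\dot H^{s}$ when $s<1$. When $s\geqslant 1$, we instead bound it by $\|v\|_{\dot H^{s}}+\|v\|_{\dot H^{\frac32-\varepsilon}}$ (or $\|v\|_{L^2}$), in all cases through the elementary inequality $|\xi|\leqslant |\xi|^{a}+|\xi|^{b}$ for $a\leqslant 1\leqslant b$. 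Since $\frac32-\varepsilon>\frac12$, an appropriate pair is always available: for $s\leqslant1$ we take $|\xi|\leqslant|\xi|^{s}+|\xi|^{\frac32-\varepsilon}$ (valid because $\frac32-\varepsilon\geqslant1$ when $\varepsilon\leqslant\frac12$; for larger $\varepsilon$ we use $L^{2}$-type low-frequency control instead). This yields
\[
t^{s}\|v\|_{\dot H^{1}}\leqslant t^{s}\|(-\Delta)^{\frac{s}{2}}v\|_{L^2}+t^{s}\|(-\Delta)^{\frac{3/2-\varepsilon}{2}}v\|_{L^{2}}
= \||J|^{s}u\|_{L^2}+t^{\,s+\varepsilon-\frac32}\||J|^{\frac32-\varepsilon}u\|_{L^{2}},
\]
where we used $\||J|^{\frac32-\varepsilon}u\|_{L^{2}}=t^{\frac32-\varepsilon}\|(-\Delta)^{\frac{3/2-\varepsilon}{2}}v\|_{L^{2}}$. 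This is exactly the source of the factor $t^{s+\varepsilon-\frac32}$ in \eqref{4.3}. The $\||J|^{s}u\|$ terms carry no decaying factor, and they are absorbed into $Ct^{s+\varepsilon-\frac32}\||J|^{s}u\|$ only when $s\geqslant\frac32-\varepsilon$. I expect this power-of-$t$ matching to be the main delicate point. If it fails for small $s$, I would route that term through $|J|^{\frac32-\varepsilon}$ by the same interpolation, since the statement is only ever used with $s$ near $\frac32$.

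The proof of \eqref{4.4} is symmetric. We write $(-\Delta)^{\frac s2}v=(-\Delta_{A})^{\frac s2}v-(\text{difference})$ and apply Lemma \ref{lemma4.1} again. The new ingredient is that $\|v\|_{\dot H^{1}}$ must now be controlled by $\|(-\Delta_{A})^{\frac{\sigma}{2}}v\|_{L^{2}}$. We obtain this from the diamagnetic-type identity \eqref{1.3}, which gives $\|(\nabla+iA)v\|_{L^2}=\|(-\Delta_A)^{\frac12}v\|_{L^2}$ and hence $\|\nabla v\|_{L^2}\leqslant \|(-\Delta_A)^{\frac12}v\|_{L^2}+\delta\|\langle x\rangle^{-12}v\|_{L^2}$. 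The last term is controlled by Hardy's inequality, $\|\langle x\rangle^{-1}v\|_{L^2}\lesssim\|\nabla v\|_{L^2}$, and absorbed into the left-hand side because $\delta$ is small. Spectral calculus for $-\Delta_{A}$ then gives $\|(-\Delta_{A})^{\frac12}v\|\leqslant\|(-\Delta_{A})^{\frac s2}v\|+\|(-\Delta_{A})^{\frac{3/2-\varepsilon}{2}}v\|$. The remaining bookkeeping is identical to the first part.
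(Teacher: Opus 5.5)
There is a genuine gap at the step where you control $\|v\|_{\dot H^{1}}$. The inequality $|\xi|\leqslant|\xi|^{a}+|\xi|^{b}$ needs $a\leqslant 1\leqslant b$. So after Lemma 4.1 you can only close the argument when $1$ lies between $s$ and $\frac{3}{2}-\varepsilon$, and your claim that an appropriate pair is always available is false. In the regime where the lemma matters ($\varepsilon<\frac{1}{2}$ and $s\geqslant\frac{3}{2}-\varepsilon$, e.g.\ $s=s_0\in(\frac{3}{2},\frac{5}{3})$), both exponents exceed $1$. For $v_\lambda=\phi(\lambda\,\cdot)$ one has $\|v_\lambda\|_{\dot H^{a}}=\lambda^{a-\frac{3}{2}}\|\phi\|_{\dot H^{a}}$. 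Hence $\|v_\lambda\|_{\dot H^{1}}\sim\lambda^{-\frac{1}{2}}$, while $\|v_\lambda\|_{\dot H^{s}}+\|v_\lambda\|_{\dot H^{\frac{3}{2}-\varepsilon}}\sim\lambda^{s-\frac{3}{2}}+\lambda^{-\varepsilon}=o(\lambda^{-\frac{1}{2}})$ as $\lambda\to0$.

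Falling back on $\|v\|_{L^{2}}$ brings in $\|u\|_{L^{2}}$, which is not on the right-hand side of (4.3). For $\frac{3}{2}-\varepsilon\leqslant s<1$ the same failure occurs at high frequency ($\lambda\to\infty$). Your spectral-calculus step for (4.4) needs the same betweenness, so the Hardy absorption, which is correct, does not rescue it. As written, your argument is complete only when $\frac{3}{2}-\varepsilon\leqslant 1\leqslant s$.

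The missing ingredient is a bound on $(-\Delta_A)^{\frac{s}{2}}-(-\Delta)^{\frac{s}{2}}$ whose low-frequency part is controlled by $\|v\|_{\dot H^{\frac{3}{2}-\varepsilon}}$ rather than $\|v\|_{\dot H^{1}}$. It would have to use the decay of $A$, e.g.\ H\"older against $\langle x\rangle^{-12}$ together with $\dot H^{\frac{3}{2}-\varepsilon}\subset L^{\frac{3}{\varepsilon}}$. The paper takes a different route aimed at this. It first bounds the form $\|(\nabla+iA)u\|^{2}$ by H\"older and Sobolev in (4.5)--(4.6), then interpolates with Lemma 4.1 at some $s_1$. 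However, the $\dot H^{1}$ contribution (the factor $(-\Delta)^{-\frac{1}{4}+\frac{3}{2}\eta}$ in (4.7)--(4.8)) is simply absent from (4.9). So the paper does not actually supply this ingredient either.

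The power-of-$t$ issue you flagged cannot be repaired by routing through $|J|^{\frac{3}{2}-\varepsilon}$: for $s<\frac{3}{2}-\varepsilon$ the inequality is false. Take $A\equiv 0$, which satisfies (H). Dividing (4.3) by $t^{s}$ gives $\|v\|_{\dot H^{s}}\leqslant C\|v\|_{\dot H^{\frac{3}{2}-\varepsilon}}+Ct^{s+\varepsilon-\frac{3}{2}}\|v\|_{\dot H^{s}}$ for every $v=M(-t)u$. Choosing $t$ with $Ct^{s+\varepsilon-\frac{3}{2}}\leqslant\frac{1}{2}$ yields $\|v\|_{\dot H^{s}}\leqslant 2C\|v\|_{\dot H^{\frac{3}{2}-\varepsilon}}$, which $v_\lambda$ contradicts as $\lambda\to0$.

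So the statement must be restricted to $s\geqslant\frac{3}{2}-\varepsilon$ and $t\geqslant1$. The paper's last step (multiplying (4.9) by $t^{s}$ and absorbing powers of $t$) tacitly requires the same restriction. Your remark that the lemma is only used with $s$ near $\frac{3}{2}$ is also inaccurate. Lemma 4.3 applies (4.4) with $s=1$ to extract the factor $t^{\varepsilon-\frac{3}{2}}$, and this falls in the false range whenever $\varepsilon<\frac{1}{2}$.
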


\begin{proof}
According to Lemma \ref{lemma4.1}, we can exchange $-\Delta_{A}$ and $-\Delta$, $|J|^{s}$ and $|J_{A}|^{s}$, therefore we only just prove estimate \eqref{4.3}.\\
In fact,
\begin{equation}\label{4.5}
\|\sqrt{-\Delta_{A}}\,u\|^{2}_{L^{2}_{x}}\leqslant C_{1}\|\sqrt{-\Delta}\,u\|^{2}_{L^{2}_{x}}+C_{2}\|(\nabla\cdot A+A\cdot\nabla+|A|^{2})|u|^{2}\|_{L_{x}^{1}}.
\end{equation}
And for the second term on the right-hand side of \eqref{4.5}, by H\"older's inequality and Sobolev embedding, we obtain that
\begin{flalign}
&\|(\nabla\cdot A+A\cdot\nabla+|A|^{2})|u|^{2}\|_{L_{x}^{1}}\nonumber\\\leqslant& C\|(\nabla\cdot A+|A|^{2})|u|^{2}\|_{L^{1}_{x}}+C\|A\cdot\nabla |u|^{2}\|_{L^{1}_{x}}\nonumber\\
\leqslant& C\|\nabla\cdot A+|A|^{2}\|_{L_{x}^{m'_{1}}}\cdot\|u\|_{L_{x}^{2m_{1}}}^{2}+C\|\langle x\rangle^{-6}\|_{L_{x}^{m'_{2}}}\cdot\|\langle x\rangle^{6}A\cdot\nabla |u|^{2}\|_{L^{m_{2}}}\nonumber\\
\leqslant&C\|u\|_{L_{x}^{2m_{1}}}^{2}+C\|(-\Delta)^{\frac{1}{2}}u\|_{L_{x}^{2m_{2}}}^{2}\nonumber\\
\leqslant& C\|(-\Delta)^{\frac{k_{1}}{2}}u\|^{2}_{L^{2}}+C\|(-\Delta)^{\frac{k_{2}}{2}+\frac{1}{2}}u\|^{2}_{L^{2}}\nonumber\\
\leqslant& C\|(-\Delta)^{\frac{3}{2}(\frac{1}{2}-\eta)}u\|^{2}_{L_{x}^{2}},\label{4.6}
\end{flalign}	
where $\frac{1}{m_{i}}+\frac{1}{m'_{i}}=1$, $i=1,2$; $m_1=\frac{1}{2\eta},m_2=\frac{3}{2+6\eta}<1$; $k_{1}=\frac{3}{2}-3\eta$, $k_{2}=\frac{1}{2}-3\eta$ (similarly see Theorem 4.12 in \cite{AF}).\\
Then, combining \eqref{4.5} and \eqref{4.6}, we have
\begin{flalign}
\|\sqrt{-\Delta_{A}}\,u\|^{2}_{L^{2}_{x}}&\leqslant C_{1}\|\sqrt{-\Delta}\,u\|^{2}_{L^{2}_{x}}+C_{2}'\|(-\Delta)^{\frac{3}{2}(\frac{1}{2}-\eta)}u\|^{2}_{L_{x}^{2}}\nonumber\\
&\leqslant C\|(-\Delta)^{\frac{3}{4}-\frac{3}{2}\eta}(1+(-\Delta)^{-\frac{1}{4}+\frac{3}{2}\eta})u\|^{2}_{L_{x}^{2}}.\label{4.7}
\end{flalign}
Furthermore, for every $0<s_{1}<2$, from estimate \eqref{4.1} we obtain that
\begin{flalign}
&\|(-\Delta_{A})^{\frac{s_{1}}{2}}u\|^{2}_{L_{x}^{2}}\nonumber\\
\leqslant&\|(-\Delta_{A})^{\frac{s_{1}}{2}}u-(-\Delta)^{\frac{s_{1}}{2}}u\|_{L_{x}^{2}}+\|(-\Delta)^{\frac{s_{1}}{2}}u\|^{2}_{L_{x}^{2}}\nonumber\\
\leqslant&C\|u\|^{2}_{\dot{H}_{x}^{1}}+\|(-\Delta)^{\frac{s_{1}}{2}}u\|^{2}_{L_{x}^{2}}\nonumber\\
\leqslant&C\|(-\Delta)^{\frac{3}{4}-\frac{3}{2}\eta}(1+(-\Delta)^{-\frac{1}{4}+\frac{3}{2}\eta}+(-\Delta)^{-\frac{3}{4}+\frac{3}{2}\eta+\frac{s_{1}}{2}})u\|^{2}_{L_{x}^{2}}.\label{4.8}
\end{flalign}
Applying the interpolation method to inequalities \eqref{4.7} and \eqref{4.8},  for every $\frac{1}{2}<s<s_{1}<2$, we have
\begin{equation}\label{4.9}
\|(-\Delta_{A})^{\frac{s}{2}}u\|_{L_{x}^{2}}\leqslant C\|(-\Delta)^{\frac{3}{4}-\frac{3}{2}\eta}(1+(-\Delta)^{\frac{s}{2}-\frac{3}{4}+\frac{3}{2}\eta})u\|_{L_{x}^{2}}.
\end{equation}	
Multiplying both sides of inequality \eqref{4.9} by $t^{s}$ gives the estimate
$$
\||J_{A}|^{s}u\|_{L^{2}_{x}}\leqslant Ct^{s+3\eta-\frac{3}{2}}(\||J|^{\frac{3}{2}-3\eta}u\|_{L^{2}_{x}}+\||J|^{s}u\|_{L^{2}_{x}}),
$$
and letting $\varepsilon=3\eta$, \eqref{4.3} follows. The proof of Lemma \ref{lemma4.2} is completed.
\end{proof}
\begin{lemma}\label{lemma4.3}
Let $A(x)$ satisfy hypothesis $(\textbf H)$, $M(-t)$ and $|J_{A}|^{s}$ as in \eqref{2.2} and \eqref{2.11} respectively. Then for $\forall\ \varepsilon\in(0,1)$, $t\geqslant1$ and $1<s<2$, we have the following estimate:
\begin{equation}\label{4.10}
\|t^{s-1}V(s)M(-t)u\|_{L^{4}_{t}((1,\infty);L^{1}_{x}(\mathbb{R}^{3}))}\leqslant C_{s}(\||J_{A}|^{\frac{3}{2}-\varepsilon}u\|_{L^{\infty}_{t}L^{2}_{x}}+\||J_{A}|u\|_{L^{\infty}_{t}L^{2}_{x}}).
\end{equation}
\end{lemma}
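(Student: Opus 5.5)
We combine Lemma \ref{lemma3.2} (with $r=1$) and the ``almost equivalence'' of Lemma \ref{lemma4.2}, and then read off the time integrability. For $r=1$ the admissible range in Lemma \ref{lemma3.2} is $\frac{2}{1}-1<s<2$, i.e. $1<s<2$, which is exactly the range in the statement. So for each fixed $t\geqslant1$ we have
\begin{equation*}
\|t^{s-1}V(s)M(-t)u\|_{L^{1}_{x}}\lesssim t^{s-1}\|M(-t)u\|_{\dot{H}^{1}_{x}}=t^{s-1}\|(-\Delta)^{\frac{1}{2}}M(-t)u\|_{L^{2}_{x}}.
\end{equation*}
By definition \eqref{2.10}, $\|(-\Delta)^{\frac12}M(-t)u\|_{L^{2}}=t^{-1}\||J(t)|u\|_{L^{2}}$, because $M(t)$ is unitary on $L^{2}$. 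Hence
\begin{equation*}
\|t^{s-1}V(s)M(-t)u\|_{L^{1}_{x}}\lesssim t^{s-2}\||J(t)|u\|_{L^{2}_{x}}.
\end{equation*}

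Next we pass from $|J|$ to $|J_{A}|$ using \eqref{4.4} of Lemma \ref{lemma4.2} with exponent $1\in(0,2)$:
\begin{equation*}
\||J|u\|_{L^{2}_{x}}\leqslant Ct^{1+\varepsilon-\frac32}\big(\||J_{A}|^{\frac32-\varepsilon}u\|_{L^{2}_{x}}+\||J_{A}|u\|_{L^{2}_{x}}\big).
\end{equation*}
Combining this with the previous bound gives
\begin{equation*}
\|t^{s-1}V(s)M(-t)u\|_{L^{1}_{x}}\lesssim t^{s-\frac52+\varepsilon}\big(\||J_{A}|^{\frac32-\varepsilon}u\|_{L^{\infty}_{t}L^{2}_{x}}+\||J_{A}|u\|_{L^{\infty}_{t}L^{2}_{x}}\big).
\end{equation*}
Taking the $L^{4}_{t}(1,\infty)$ norm, it remains to check that $t^{s-\frac52+\varepsilon}\in L^{4}(1,\infty)$, which requires $4(s-\frac52+\varepsilon)<-1$, i.e. $s+\varepsilon<\frac94$.

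This power counting is the main obstacle, since the condition fails for $s$ close to $2$ with $\varepsilon$ not small. To handle it, we would revisit the proof of Lemma \ref{lemma3.2} instead of using it as a black box. We would split $\int_{0}^{\infty}=\int_0^{1}+\int_{1}^{\infty}$ in the representation of Lemma \ref{lemma2.4}, after rescaling $\tau\mapsto\tau/t^{2}$ in the kernel of $M(-t)$. The goal is to measure the high-frequency part by $\||J_{A}|^{\frac32-\varepsilon}u\|_{L^2}$ and the low-frequency part by $\||J_{A}|u\|_{L^2}$, so as to gain extra negative powers of $t$. Concretely, the $\tau>1$ piece decays like $\tau^{\frac{s}{2}-2}$ and can absorb a factor $\tau^{\frac14-\frac{\varepsilon}{2}}$, which trades $\dot H^{1}$ for $\dot H^{\frac32-\varepsilon}$. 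That regularity carries the factor $t^{-(\frac32-\varepsilon)}$ through $|J_{A}|^{\frac32-\varepsilon}$, and these extra powers of $t$ should restore $L^{4}_{t}$ integrability on the whole range $1<s<2$.

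If this refinement is too delicate, the fallback is to use \eqref{4.4} with the exponent chosen as $\frac32-\varepsilon$, which directly controls the $\dot H^{1}$-type quantity through interpolation between $\||J_A|u\|$ and $\||J_{A}|^{\frac32-\varepsilon}u\|$. The constant $C_{s}$ then absorbs the $s$-dependence coming from $c(s)$ in Lemma \ref{lemma2.4}.
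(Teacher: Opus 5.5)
Your first half is exactly the paper's proof. It uses Lemma~\ref{lemma3.2} with $r=1$, the identity \eqref{4.12}, i.e.\ $\|M(-t)u\|_{\dot H^{1}}=t^{-1}\||J|u\|_{L^{2}}$, and \eqref{4.4} at exponent $1$. Together these give the fixed-time bound $t^{s-\frac52+\varepsilon}\big(\||J_A|^{\frac32-\varepsilon}u\|_{L^2}+\||J_A|u\|_{L^2}\big)$. The paper hits the same integrability condition and simply adds ``letting $s<\frac94-\varepsilon$''. So what either argument proves is \eqref{4.10} under $s+\varepsilon<\frac94$, not the full range $\varepsilon\in(0,1)$, $1<s<2$ of the statement. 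Your proposal goes wrong in claiming that a refinement should recover the whole range.

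No argument can do that, because \eqref{4.10}, as an inequality for general $u$, fails there. Take $w\in\mathscr{S}(\mathbb{R}^{3})$ with $V(s)w\neq0$ (if $V(s)\equiv0$ there is nothing to prove), $0<\varepsilon\leqslant\frac12$, and $u(t)=M(t)\,t^{-(\frac32-\varepsilon)}w$. Then $\||J_A|^{\frac32-\varepsilon}u\|_{L^2}=\|(-\Delta_A)^{\frac34-\frac{\varepsilon}{2}}w\|_{L^2}$ and $\||J_A|u\|_{L^2}=t^{\varepsilon-\frac12}\|(\nabla+iA)w\|_{L^2}$ stay bounded on $[1,\infty)$. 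The left side of \eqref{4.10}, however, equals $\|V(s)w\|_{L^1}\,\|t^{s-\frac52+\varepsilon}\|_{L^4(1,\infty)}=\infty$ whenever $s+\varepsilon\geqslant\frac94$ (e.g.\ $\varepsilon=\frac12$, $\frac74\leqslant s<2$).

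You can see this inside your sketch. The high-$\tau$ piece measured in $\dot H^{\frac32-\varepsilon}$ carries $t^{s-1}\cdot t^{-(\frac32-\varepsilon)}=t^{s-\frac52+\varepsilon}$, the same power as before, so there is no extra decay. The fallback is worse: \eqref{4.4} at exponent $\frac32-\varepsilon$ has prefactor $t^{0}$, and interpolating down to $\dot H^{1}$ needs $\|u\|_{L^2}$ (absent from \eqref{4.10}) and yields only $t^{s-2}$. The restriction $s<\frac94-\varepsilon$ has to go into the statement.

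Note also that the only gain in $t$, in both your argument and the paper's, is the factor $t^{\varepsilon-\frac12}$ in \eqref{4.4} at exponent $1$. For $\varepsilon<\frac12$ that inequality cannot hold. Already for $A=0$ and $u=M(t)v$, dividing by $t$ and letting $t\to\infty$ would give $\|\nabla v\|_{L^2}\lesssim\|(-\Delta)^{\frac34-\frac{\varepsilon}{2}}v\|_{L^2}$ for all $v$, which fails at low frequencies. So in that range you are relying on Lemma~\ref{lemma4.2} exactly where it is unsound.
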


\begin{proof}	
According to estimate \eqref{3.10} in Lemma \ref{lemma3.2}, it follows that
\begin{flalign}
&\|t^{s-1}V(s)M(-t)u\|_{L^{4}_{t}((1,\infty);L^{1}_{x}(\mathbb{R}^{3}))}\nonumber\\
\leqslant&C_{s}\|t^{s-1}\|V(s)M(-t)u\|_{L^{1}_{x}}\|_{L^{4}_{t}}\nonumber\\
\leqslant&C_{s}\|t^{s-1}\|M(-t)u\|_{\dot{H}^{1}_{x}}\|_{L^{4}_{t}}.\label{4.11}
\end{flalign}	
Furthermore,
\begin{flalign}
&\|M(-t)u\|_{\dot{H}^{1}_{x}(\mathbb{R}^{3})}\nonumber\\
=&\|(-\Delta)^{\frac{1}{2}}M(-t)u\|_{L^{2}_{x}}\nonumber\\
=&\|M(t)(-t^{2}\Delta)^{\frac{1}{2}}M(-t)u\|_{L^{2}_{x}}\cdot t^{-1}\nonumber\\
=&\||J|u\|_{L^{2}_{x}}\cdot t^{-1}.\label{4.12}
\end{flalign}	
Applying estimate \eqref{4.4} of Lemma \ref{lemma4.2} to equality \eqref{4.12}, we obtain that
\begin{equation}\label{4.13}
\|M(-t)u\|_{\dot{H}^{1}_{x}(\mathbb{R}^{3})}\leqslant Ct^{\varepsilon-\frac{3}{2}}(\||J_{A}|^{\frac{3}{2}-\varepsilon}u\|_{L^{2}_{x}}+\||J_{A}|u\|_{L^{2}_{x}}).
\end{equation}
Therefore, substituting \eqref{4.13} into \eqref{4.11} and letting $s<\frac{9}{4}-\varepsilon$ such that $t^{s-1+\varepsilon-\frac{3}{2}}\in L^{4}_{t}(1,\infty)$, estimate \eqref{4.10} is obtained from \eqref{4.13} directly. The proof of Lemma \ref{lemma4.3} is completed.
\end{proof}

\begin{lemma}\label{lemma4.4}
Let $A(x)$ satisfy hypothesis $(\textbf H)$, $|J|^{s}$ and $|J_{A}|^{s}$ as in \eqref{2.10} and \eqref{2.11} respectively. Then for $\forall\varepsilon\in(0,1)$, $s\in(\frac{3}{2},\frac{5}{3})$ and  $p>\frac{5}{3}$, we have the following estimate:
\begin{equation}\label{4.14}
\begin{split}
\||J_{A}|^{s}(|u|^{p-1}u)\|_{L_{t}^{1}L_{x}^{2}}\lesssim&\|u_{0}\|^{(1-\frac{3}{2s})(p-1)}_{L_{x}^{2}}\cdot(\||J_{A}|^{s}u\|_{L_{t}^{\infty}L_{x}^{2}}+\||J_{A}|^{\frac{3}{2}-\varepsilon}u\|_{L_{t}^{\infty}L_{x}^{2}})^{\frac{3(p-1)}{2s}}\\\\
\cdot&(\||J_{A}|^{\frac{3}{2}-\varepsilon}u\|_{L_{t}^{\infty}L_{x}^{2}}+\||J_{A}|^{s}u\|_{L_{t}^{\infty}L_{x}^{2}}+\||J_{A}|u\|_{L_{t}^{\infty}L_{x}^{2}}).
\end{split}
\end{equation}
\end{lemma}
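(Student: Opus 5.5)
We first move the estimate from $|J_{A}|^{s}$ to the free operator $|J|^{s}$. By \eqref{4.3} of Lemma \ref{lemma4.2},
\[
\||J_{A}|^{s}(|u|^{p-1}u)\|_{L^{2}_{x}}\lesssim t^{s+\varepsilon-\frac{3}{2}}\bigl(\||J|^{\frac{3}{2}-\varepsilon}(|u|^{p-1}u)\|_{L^{2}_{x}}+\||J|^{s}(|u|^{p-1}u)\|_{L^{2}_{x}}\bigr).
\]
Each term on the right is then handled with the gauge structure of $|J|^{\sigma}$. Since $|J|^{\sigma}=M(t)(-t^{2}\Delta)^{\frac{\sigma}{2}}M(-t)$ and $|M(-t)F|=|F|$, we set $v:=M(-t)u$. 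The nonlinearity commutes with the phase: $M(-t)(|u|^{p-1}u)=|v|^{p-1}v$. Therefore
\[
\||J|^{\sigma}(|u|^{p-1}u)\|_{L^{2}_{x}}=t^{\sigma}\|(-\Delta)^{\frac{\sigma}{2}}(|v|^{p-1}v)\|_{L^{2}_{x}}.
\]
We apply the fractional chain rule (Kato--Ponce / Christ--Weinstein type). It is valid for $\sigma<p$ when $p$ is not an odd integer, and $\sigma\le s<\frac{5}{3}<p$ holds here. It gives
\[
\|(-\Delta)^{\frac{\sigma}{2}}(|v|^{p-1}v)\|_{L^{2}}\lesssim\|v\|_{L^{\infty}}^{p-1}\|(-\Delta)^{\frac{\sigma}{2}}v\|_{L^{2}}=\|u\|_{L^{\infty}}^{p-1}t^{-\sigma}\||J|^{\sigma}u\|_{L^{2}}.
\]
The factor $t^{\sigma}$ cancels, and we are left with $\|u\|_{L^{\infty}}^{p-1}\||J|^{\sigma}u\|_{L^{2}}$ for $\sigma\in\{\frac{3}{2}-\varepsilon,s\}$.

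Next we bound $\|u\|_{L^{\infty}}=\|v\|_{L^{\infty}}$ by Lemma \ref{lemma2.5}, using $s>\frac{3}{2}$:
\[
\|v\|_{L^{\infty}}\lesssim\|v\|_{L^{2}}^{1-\frac{3}{2s}}\|v\|_{\dot{H}^{s}}^{\frac{3}{2s}}=\|u\|_{L^{2}}^{1-\frac{3}{2s}}\,t^{-\frac{3}{2}}\||J|^{s}u\|_{L^{2}}^{\frac{3}{2s}}.
\]
By $L^{2}$ conservation for \eqref{1.1} (the equation is gauge invariant and $\Delta_{A}$ is self-adjoint), $\|u(t)\|_{L^{2}}=\|u_{0}\|_{L^{2}}$, which accounts for the factor $\|u_{0}\|_{L^{2}}^{(1-\frac{3}{2s})(p-1)}$. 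We then convert every $|J|$-norm back to $|J_{A}|$-norms with \eqref{4.4}. This gives $\||J|^{s}u\|\lesssim t^{s+\varepsilon-\frac{3}{2}}(\||J_{A}|^{\frac{3}{2}-\varepsilon}u\|+\||J_{A}|^{s}u\|)$. For $\sigma=\frac{3}{2}-\varepsilon$ the time factor is $t^{0}$, and the linear factor $\||J|^{\sigma}u\|$ is bounded by the three norms in the last bracket of \eqref{4.14}. The $\||J_{A}|u\|$ term is included to absorb the lower-order pieces coming from Lemma \ref{lemma4.1}.

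The main obstacle is the time integrability. Collecting all powers of $t$, the integrand is bounded by $t^{\gamma}$ times $L^{\infty}_{t}$ norms, where
\[
\gamma=\bigl(s+\varepsilon-\tfrac{3}{2}\bigr)\bigl(1+\tfrac{3(p-1)}{2s}+1\bigr)-\tfrac{3(p-1)}{2}.
\]
We need $\gamma<-1$ to take the $L^{1}_{t}(1,\infty)$ norm. For $s\in(\frac{3}{2},\frac{5}{3})$ we have $s-\frac{3}{2}<\frac{1}{6}$, and with $\varepsilon$ small the positive part is at most roughly $\frac{1}{6}(2+\frac{3(p-1)}{2s})$. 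The condition $p>\frac{5}{3}$ makes $\frac{3(p-1)}{2}>1$. Checking that the negative part dominates by more than $1$ is the delicate step: it forces the restriction $s<\frac{5}{3}$ and requires taking $\varepsilon$ small, possibly re-choosing the $\eta$ in Lemma \ref{lemma4.2}. If the crude count fails near $p=\frac{5}{3}$, we would instead apply \eqref{4.4} only to the $L^{\infty}$ factor and keep the linear factor at $\sigma=\frac{3}{2}-\varepsilon$ with no time loss. We would also use $s$ close to $\frac{3}{2}$ so that $s+\varepsilon-\frac{3}{2}$ is nearly zero. Then $\gamma\approx-\frac{3(p-1)}{2}<-1$ exactly when $p>\frac{5}{3}$, which explains the hypothesis.
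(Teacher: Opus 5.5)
Your proposal follows the paper's proof. The steps are the same: apply \eqref{4.3}, gauge to $v=M(-t)u$ and use the fractional chain rule as in \eqref{4.15}, bound $\|u\|_{L^\infty_x}$ by Lemma \ref{lemma2.5} together with mass conservation, and convert back to $|J_A|$-norms via Lemmas \ref{lemma4.1}--\ref{lemma4.2}. Your $\gamma$ simplifies exactly to the paper's exponent $2s+2\varepsilon-3+\frac{3(p-1)}{2s}(\varepsilon-\frac{3}{2})$ in \eqref{4.17}, and it is made $<-1$ the same way, by taking $s$ near $\frac{3}{2}$ and $\varepsilon$ small so that it is $\approx-\frac{3(p-1)}{2}$.

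As with the paper's own argument, this yields \eqref{4.14} only for such $p$-dependent $(s,\varepsilon)$, not for all $\varepsilon\in(0,1)$ and $s\in(\frac{3}{2},\frac{5}{3})$ as stated. Also, your remark that the count ``forces $s<\frac{5}{3}$'' is inaccurate: as $\varepsilon\to0$ the condition is $s<\frac{1}{2}\bigl(1+\sqrt{1+\frac{9}{2}(p-1)}\bigr)$, which tends to $\frac{3}{2}$ as $p\to\frac{5}{3}$.
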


\begin{proof}
Let $v=e^{\frac{-i|x|^{2}}{4t}}u$, we have the following estimate for $0<s<\frac{5}{3}$, $p>\frac{5}{3}$ (see also Lemma 2.3 in \cite{HN} and Lemma 3.4 in \cite{GOV})
\begin{flalign}
&\||J|^{s}(|u|^{p-1}u)\|_{L^{2}_{x}}\nonumber\\
=&t^{s}\cdot\||v|^{p-1}v\|_{\dot{H}^{s}_{x}}\nonumber\\
\leqslant&C\|v\|^{p-1}_{L_{x}^{\infty}}\cdot t^{s}\cdot\|v\|_{\dot{H}^{s}_{x}}\nonumber\\
\leqslant&C\|u\|^{p-1}_{L_{x}^{\infty}}\cdot\|(-t^{2}\Delta)^{\frac{s}{2}}e^{\frac{-i|x|^{2}}{4t}}u\|_{L_{x}^{2}}\nonumber\\
\leqslant&C\|u\|^{p-1}_{L_{x}^{\infty}}\cdot\||J|^{s}u\|_{L^{2}_{x}}.\label{4.15}
\end{flalign}
Hence, applying Lemma \ref{lemma4.1}, Lemma \ref{lemma4.2} and inequality \eqref{4.12}, we have
\begin{equation*}
\begin{split}
&\||J_{A}|^{s}(|u|^{p-1}u)\|_{L_{t'}^{1}((1,t);L_{x}^{2}(\mathbb{R}^{3}))}\\
\leqslant&C\|\langle t'\rangle^{s+\varepsilon-\frac{3}{2}}(\||J|^{\frac{3}{2}-\varepsilon}(|u|^{p-1}u)\|_{L_{x}^{2}}+\||J|^{s}(|u|^{p-1}u)\|_{L_{x}^{2}})\|_{L_{t'}^{1}}\\
\leqslant&C\|\langle t'\rangle^{s+\varepsilon-\frac{3}{2}}\cdot\|u\|^{p-1}_{L_{x}^{\infty}}\cdot(\||J|^{\frac{3}{2}-\varepsilon}u\|_{L_{x}^{2}}+\||J|^{s}u\|_{L_{x}^{2}})\|_{L_{t'}^{1}}\\
\leqslant&C\|\langle t'\rangle^{s+\varepsilon-\frac{3}{2}}\cdot\|u\|^{p-1}_{L_{x}^{\infty}}\cdot(\| |J|^{\frac{3}{2}-\varepsilon}u-|J_{A}|^{\frac{3}{2}-\varepsilon}u\|_{L_{x}^{2}}+\||J_{A}|^{\frac{3}{2}-\varepsilon}u\|_{L_{x}^{2}}\\
&+\||J|^{s}u-|J_{A}|^{s}u\|_{L_{x}^{2}}+\||J_{A}|^{s}u\|_{L_{x}^{2}})\|_{L_{t'}^{1}}\\
\leqslant&C\|\langle t'\rangle^{s+\varepsilon-\frac{3}{2}}\cdot\|u\|^{p-1}_{L_{x}^{\infty}}\cdot(\langle t'\rangle^{\frac{3}{2}-\varepsilon}\|M(-t')u\|_{\dot{H}_{x}^{1}}+\||J_{A}|^{\frac{3}{2}-\varepsilon}u\|_{L_{x}^{2}}\\
&+\langle t'\rangle^{s}\|M(-t')u\|_{\dot{H}_{x}^{1}}+\||J_{A}|^{s}u\|_{L_{x}^{2}})\|_{L_{t'}^{1}}\\
\end{split}
\end{equation*}	
\begin{equation*}
\begin{split}
\leqslant&C\|\langle t'\rangle^{s+\varepsilon-\frac{3}{2}}\cdot\|u\|^{p-1}_{L_{x}^{\infty}}\cdot[\langle t'\rangle^{\frac{3}{2}-\varepsilon+\varepsilon-\frac{3}{2}}(\||J_{A}|^{\frac{3}{2}-\varepsilon}u\|_{L_{x}^{2}}+\||J_{A}|u\|_{L_{x}^{2}})+\||J_{A}|^{\frac{3}{2}-\varepsilon}u\|_{L_{x}^{2}}\\
&+\langle t'\rangle^{s+\varepsilon-\frac{3}{2}}(\||J_{A}|^{\frac{3}{2}-\varepsilon}u\|_{L_{x}^{2}}+\||J_{A}|u\|_{L_{x}^{2}})+\||J_{A}|^{s}u\|_{L_{x}^{2}}]\|_{L_{t'}^{1}}\\
\leqslant&C\|\langle t'\rangle^{s+\varepsilon-\frac{3}{2}}\cdot\|u\|^{p-1}_{L_{x}^{\infty}}\cdot(2\||J_{A}|^{\frac{3}{2}-\varepsilon}u\|_{L_{x}^{2}}+\||J_{A}|u\|_{L_{x}^{2}}+\||J_{A}|^{s}u\|_{L_{x}^{2}}\\
&+\langle t'\rangle^{s+\varepsilon-\frac{3}{2}}(\||J_{A}|^{\frac{3}{2}-\varepsilon}u\|_{L_{x}^{2}}+\||J_{A}|u\|_{L_{x}^{2}}))\|_{L_{t'}^{1}}.
\end{split}
\end{equation*}	
And then, again applying Lemma \ref{lemma4.2}, we can continue the estimate as following:
\begin{flalign}
&......\nonumber\\
\leqslant&C\|\langle t'\rangle^{2s+2\varepsilon-3}\cdot\|u\|^{p-1}_{L_{x}^{\infty}}\cdot
(\||J_{A}|^{\frac{3}{2}-\varepsilon}u\|_{L_{x}^{2}}+\||J_{A}|^{s}u\|_{L_{x}^{2}}+\||J_{A}|u\|_{L_{x}^{2}})\|_{L_{t'}^{1}}\nonumber\\
\leqslant&C\int_{1}^{t}\langle t'\rangle^{2s+2\varepsilon-3+\frac{3(p-1)}{2s}(\varepsilon-\frac{3}{2})}\cdot\|u\|_{L_{x}^{2}}^{(1-\frac{3}{2s})(p-1)}\cdot(\||J_{A}|^{s}u\|_{L_{x}^{2}}+\||J_{A}|^{\frac{3}{2}-\varepsilon}u\|_{L_{x}^{2}})^{\frac{3(p-1)}{2s}}\nonumber\\
&\cdot(\||J_{A}|^{\frac{3}{2}-\varepsilon}u\|_{L_{x}^{2}}+\||J_{A}|^{s}u\|_{L_{x}^{2}}+\||J_{A}|u\|_{L_{x}^{2}})dt'.\label{4.16}
\end{flalign}
Since $p>\frac{5}{3}$, we can choose $s>\frac{3}{2}$ and $\varepsilon>0$ such that
\begin{equation}\label{4.17}
\int_{1}^{t}\langle t'\rangle^{2s+2\varepsilon-3+\frac{3(p-1)}{2s}(\varepsilon-\frac{3}{2})}dt'<C.
\end{equation}
Applying \eqref{4.17} to \eqref{4.16}, estimate \eqref{4.14} follows. The proof of Lemma \ref{lemma4.4} is completed.
\end{proof}

\hspace{-5mm}\textbf{Proof of Theorem \ref{theorem1.1}:}
According to Lemma \ref{lemma2.1}, we have the following equation:
\begin{equation}\label{4.18}
(i\partial_{t}+\Delta_{A})(|J_{A}|^{s}u)-it^{s-1}M(t)V(s)M(-t)u+\rho|J_{A}|^{s}(|u|^{p-1}u)=0.
\end{equation}
From Strichartz estimates for solution of equation \eqref{4.18}, we obtain that
\begin{equation*}
\begin{split}
&\||J_{A}|^{s}u\|_{L_{t}^{\infty}((1,T);L_{x}^{2}(\mathbb{R}^{3}))}\\
\leqslant&C_{1}\||J_{A}|^{s}u(1)\|_{L_{x}^{2}(\mathbb{R}^{3})}\\
+&C_{s}\|t^{s-1}M(t)V(s)M(-t)u\|_{L_{t}^{4}((1,T);L_{x}^{1}(\mathbb{R}^{3}))}\\
+&C_{2}\||J_{A}|^{s}(|u|^{p-1}u)\|_{L_{t}^{1}((1,T);L_{x}^{2}(\mathbb{R}^{3}))}.
\end{split}
\end{equation*}	
Applying Lemma \ref{lemma4.3} and Lemma \ref{lemma4.4}, it follows that for $\forall s_{0}\in(\frac{3}{2},\frac{5}{3})$ and $p\in(\frac{5}{3},5)$,
\begin{flalign}
&\||J_{A}|^{s_{0}}u\|_{L_{t}^{\infty}((1,T);L_{x}^{2}(\mathbb{R}^{3}))}\nonumber\\
\leqslant&C_{1}\||J_{A}|^{s_{0}}u(1)\|_{L_{x}^{2}}+C_{s}(\||J_{A}|^{\frac{3}{2}-\varepsilon}u\|_{L^{\infty}_{t}L^{2}_{x}}+\||J_{A}|u\|_{L^{\infty}_{t}L^{2}_{x}})\nonumber\\
+&C_{2}\|u_{0}\|^{(p-1)\cdot (1-\frac{3}{2s_{0}})}_{L_{x}^{2}}\cdot(\||J_{A}|^{s_{0}}u\|_{L_{t}^{\infty}L_{x}^{2}}+\||J_{A}|^{\frac{3}{2}-\varepsilon}u\|_{L_{t}^{\infty}L_{x}^{2}})^{\frac{3(p-1)}{2s_{0}}}
\nonumber\\
\cdot&(\||J_{A}|^{\frac{3}{2}-\varepsilon}u\|_{L_{t}^{\infty}L_{x}^{2}}+\||J_{A}|^{s_{0}}u\|_{L_{t}^{\infty}L_{x}^{2}}+\||J_{A}|u\|_{L_{t}^{\infty}L_{x}^{2}}).\label{4.19}
\end{flalign}
And let $\|u_{0}\|_{\Sigma_{s_{0}}}^{2}:=\|u_{0}\|_{H_{x}^{s_{0}}}^{2}+\||x|^{s_{0}}u_{0}\|^{2}_{L^{2}_{x}}+\||x|^{s_{2}}(-\Delta)^{\frac{s_{1}}{2}}u\|^{2}_{L^{2}_{x}(\mathbb{R}^{3})}$ ($0<s_{1},s_{2}<s_{0}=s_{1}+s_{2}$), we claim that
\begin{equation}\label{4.20}
\||J_{A}|^{s_{0}}u\|_{L_{t}^{\infty}((1,\infty);L_{x}^{2}(\mathbb{R}^{3}))}\leqslant C\|u_{0}\|_{\Sigma_{s_{0}}}.
\end{equation}
In fact,
\begin{flalign}
&\||J_{A}|^{s_{0}}u(1)\|_{L_{x}^{2}}\nonumber\\
\leqslant&C(\||J|^{\frac{3}{2}-\varepsilon}u(1)\|_{L^{2}_{x}}+\||J|^{s_{0}}u(1)\|_{L^{2}_{x}})\nonumber\\
\leqslant&C(\|u_{0}\|_{L^{2}_{x}}+\||J|^{s_{0}}u(1)\|_{L^{2}_{x}})\nonumber\\
\leqslant&C\|u_{0}\|_{\Sigma_{s_{0}}}.\label{4.21}
\end{flalign}
From \eqref{4.19} and \eqref{4.21}, it follows that for
\begin{flalign}
&\||J_{A}|^{s_{0}}u\|_{L_{t}^{\infty}((1,T);L_{x}^{2}(\mathbb{R}^{3}))}\nonumber\\
\leqslant&C_{1}\|u_{0}\|_{\Sigma_{s_{0}}}+C_{s_{0}}(\||J_{A}|^{\frac{3}{2}-\varepsilon}u\|_{L^{\infty}_{t}L^{2}_{x}}+\||J_{A}|u\|_{L^{\infty}_{t}L^{2}_{x}})\nonumber\\
+&C_{2}\|u_{0}\|^{(p-1)\cdot (1-\frac{3}{2s_{0}})}_{L_{x}^{2}}\cdot(\||J_{A}|^{s_{0}}u\|_{L_{t}^{\infty}L_{x}^{2}}+\||J_{A}|^{\frac{3}{2}-\varepsilon}u\|_{L_{t}^{\infty}L_{x}^{2}})^{\frac{3(p-1)}{2s_{0}}}
\nonumber\\
\cdot&(\||J_{A}|^{\frac{3}{2}-\varepsilon}u\|_{L_{t}^{\infty}L_{x}^{2}}+\||J_{A}|^{s_{0}}u\|_{L_{t}^{\infty}L_{x}^{2}}+\||J_{A}|u\|_{L_{t}^{\infty}L_{x}^{2}})\nonumber\\
\leqslant&C_{1}\|u_{0}\|_{\Sigma_{s_{0}}}+C_{s_{0}}(2\|u_{0}\|_{L^{2}}+\frac{1}{2}\||J_{A}|^{s_{0}}u\|_{L_{t}^{\infty}L_{x}^{2}})\nonumber\\
+&C_{2}\|u_{0}\|^{(p-1)(1-\frac{3}{2s_{0}})}_{L_{x}^{2}}(2\|u_{0}\|_{L^{2}}+\frac{1}{2}\||J_{A}|^{s_{0}}u\|_{L_{t}^{\infty}L_{x}^{2}})^{\frac{3(p-1)}{2s_{0}}}\nonumber\\
\cdot&(2\|u_{0}\|_{L^{2}}+\frac{1}{2}\||J_{A}|^{s_{0}}u\|_{L_{t}^{\infty}L_{x}^{2}})\nonumber\\
\leqslant&C_{1}\|u_{0}\|_{\Sigma_{s_{0}}}+C_{s_{0}}(2\|u_{0}\|_{L^{2}}+\frac{1}{2}\||J_{A}|^{s_{0}}u\|_{L_{t}^{\infty}L_{x}^{2}})\nonumber\\
\cdot&(1+\|u_{0}\|^{(p-1)(1-\frac{3}{2s_{0}})}_{L_{x}^{2}}(2\|u_{0}\|_{L^{2}}+\frac{1}{2}\||J_{A}|^{s_{0}}u\|_{L_{t}^{\infty}L_{x}^{2}})^{\frac{3(p-1)}{2s_{0}}})\label{4.22}
\end{flalign}
on any interval $(1,T)$ with a constant $C_{s_{0}}$ independent of $T$. Hence the proof of \eqref{4.20} follows by a standard continuity argument provided that we fix $\|u_{0}\|_{\Sigma_{s_{0}}}$  sufficiently small (also see the proof of Theorem 1.1 in \cite{CGV}).\\
Combining Lemma \ref{lemma2.5}, Lemma \ref{lemma4.2} and estimate \eqref{4.20}, we obtain that for some $s>s_{0}$
\begin{flalign}
\|u\|_{L^{\infty}_{x}(\mathbb{R}^{3})}&=\|M(-t)u\|_{L^{\infty}_{x}(\mathbb{R}^{3})}\nonumber\\
&\leqslant C\|M(-t)u\|_{L^{2}_{x}(\mathbb{R}^{3})}^{1-\frac{3}{2s_{0}}}\cdot\|M(-t)u\|_{\dot{H}^{s_{0}}(\mathbb{R}^{3})}^{\frac{3}{2s_{0}}}\nonumber\\
&\leqslant Ct^{-\frac{3}{2}}\cdot\|u\|_{L^{2}_{x}(\mathbb{R}^{3})}^{1-\frac{3}{2s_{0}}}\cdot\||J|^{s_{0}}u\|_{L^{2}_{x}(\mathbb{R}^{3})}^{\frac{3}{2s_{0}}}\nonumber\\
&\leqslant Ct^{(\varepsilon-\frac{3}{2})\cdot\frac{3}{2s_{0}}}\cdot\|u\|_{L^{2}_{x}(\mathbb{R}^{3})}^{1-\frac{3}{2s_{0}}}\cdot(\||J_{A}|^{\frac{3}{2}-\varepsilon}u\|_{L^{2}_{x}L_{t}^{\infty}}+\||J_{A}|^{s_{0}}u\|_{L^{2}_{x}L_{t}^{\infty}})^{\frac{3}{2s_{0}}}\nonumber\\
&\leqslant Ct^{(\varepsilon-\frac{3}{2})\cdot\frac{3}{2s_{0}}}\cdot\|u_{0}\|_{\Sigma_{s}}.\label{4.23}
\end{flalign}
That is, the proof of Theorem \ref{theorem1.1} is completed.

\section*{Appendix A. Global well-posedness}
\setcounter{equation}{0}
\setcounter{subsection}{0}
\setcounter{theorem}{0}
\setcounter{remark}{0}
\renewcommand{\theequation}{A.\arabic{equation}}
\renewcommand{\thesubsection}{A.\arabic{subsection}}
\renewcommand{\thetheorem}{A.\arabic{theorem}}
\renewcommand{\theremark}{A.\arabic{remark}}

In this appendix, we prove that the nonlinear magnetic Schr\"{o}dinger initial value problem is globally well-posed in $L^{\infty}_{t,x}((1,\infty)\times\mathbb{R}^{3})$ for $u_{0}\in\Sigma_{s}(\mathbb{R}^{3})$ ($s>\frac{3}{2}$).
\begin{theorem}\label{theorem A.1}
Let $A(x)$ satisfy hypothesis of Theorem \ref{theorem1.1}, $\frac{5}{3}<p<5$, and $|J_{A}|^{s}u\in L_{t}^{\infty}((1,\infty);L_{x}^{2}(\mathbb{R}^{3}))$ be a solution of equation \eqref{4.18}. If $u_{0}\in L^{2}_{x}(\mathbb{R}^{3})$, $|J_{A}|^{s}u(1)\in L^{2}_{x}(\mathbb{R}^{3})$ and $\|u_{0}\|_{L^{2}_{x}}$ is small enough, then the nonlinear Schr\"odinger equation \eqref{4.18} is globally well-posed.
\end{theorem}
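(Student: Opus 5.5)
The plan is to run a contraction-mapping argument in a space adapted to the a priori bound \eqref{4.20}, followed by the usual continuation argument in time. Let $T>1$ and define
\[
X_{T}:=\{u:\ u\in L^{\infty}_{t}((1,T);L^{2}_{x}),\ |J_{A}|^{s}u\in L^{\infty}_{t}((1,T);L^{2}_{x})\},
\]
with norm
\[
\|u\|_{X_{T}}:=\|u\|_{L^{\infty}_{t}L^{2}_{x}}+\||J_{A}|^{s}u\|_{L^{\infty}_{t}L^{2}_{x}}.
\]
On $X_T$ consider the Duhamel map
\[
\Phi(u)(t)=e^{i(t-1)\Delta_{A}}u_{0}+i\rho\int_{1}^{t}e^{i(t-t')\Delta_{A}}(|u|^{p-1}u)(t')\,dt'.
\]
Since $\Delta_A$ is self-adjoint, the mass $\|u(t)\|_{L^2}=\|u_0\|_{L^2}$ is conserved, and the linear flow is unitary on $L^2$. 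The key observation is that $|J_A|^s u$ solves \eqref{4.18}. So I will estimate $|J_A|^s\Phi(u)$ through the Strichartz inequality already used in the proof of Theorem \ref{theorem1.1}, which gives
\[
\||J_{A}|^{s}\Phi(u)\|_{L^{\infty}_{t}L^{2}_{x}}\lesssim \||J_{A}|^{s}u(1)\|_{L^{2}}+\|t^{s-1}V(s)M(-t)u\|_{L^{4}_{t}L^{1}_{x}}+\||J_{A}|^{s}(|u|^{p-1}u)\|_{L^{1}_{t}L^{2}_{x}}.
\]
The second term is handled by Lemma \ref{lemma4.3}, and the third by Lemma \ref{lemma4.4}. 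The intermediate norms $\||J_A|^{\frac32-\varepsilon}u\|$ and $\||J_A|u\|$ are interpolated between $\|u\|_{L^2}$ and $\||J_A|^s u\|_{L^2}$ by the spectral theorem for $-t^2\Delta_A$, since $1<\frac32-\varepsilon<s$. This bounds $\|\Phi(u)\|_{X_T}$ by
\[
C\|u_0\|_{L^2}+C\||J_A|^s u(1)\|_{L^2}+\tfrac12\|u\|_{X_T}+C\|u_0\|_{L^2}^{\theta}\|u\|_{X_T}^{1+\frac{3(p-1)}{2s}}.
\]
Here $\theta=(p-1)(1-\frac{3}{2s})$. The power of $\|u_0\|_{L^2}$ in the nonlinear term is positive, which is why smallness of $\|u_0\|_{L^2}$ is the hypothesis we need. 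With the smallness assumption, $\Phi$ maps a ball of radius $R\sim\|u_0\|_{L^2}+\||J_A|^s u(1)\|_{L^2}$ into itself. All constants are independent of $T$ by \eqref{4.17}.

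For the contraction step, I will estimate $\Phi(u)-\Phi(v)$ in the weaker norm $L^\infty_tL^2_x$, using the pointwise bound
\[
||u|^{p-1}u-|v|^{p-1}v|\lesssim(|u|^{p-1}+|v|^{p-1})|u-v|.
\]
The $L^\infty_x$ norms of $u$ and $v$ are controlled through Lemma \ref{lemma2.5} and Lemma \ref{lemma4.2}, as in \eqref{4.23}, by $t^{(\varepsilon-\frac32)\frac{3}{2s}}R$. So the time integral is
\[
\int_1^\infty t^{(\varepsilon-\frac32)\frac{3(p-1)}{2s}}\,dt,
\]
which is finite for $p>\frac53$ and $s$ close to $\frac32$, and the difference carries a small factor. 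The ball in $X_T$ is closed in the $L^\infty L^2$ metric by weak-$*$ lower semicontinuity. Hence Banach's fixed point theorem gives existence and uniqueness. Continuous dependence follows from the same difference estimate.

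Because the bounds do not depend on $T$, a continuity argument, as in the proof of \eqref{4.20}, lets us take $T\to\infty$. Lemma \ref{lemma2.5} then places the solution in $L^\infty_{t,x}((1,\infty)\times\mathbb{R}^3)$.

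The main obstacle is the commutator term $V(s)$. It is linear in $u$ and does not carry a small factor, so it cannot be absorbed by smallness of the data. My first attempt is to use that its coefficient, coming from Lemma \ref{lemma3.2}, is proportional to $\delta$ by hypothesis $(\textbf H)$. This makes $C_s\le\frac12$ after interpolation. The resulting linear term can then be absorbed into the left-hand side, and it vanishes entirely in the difference estimate because it is linear.
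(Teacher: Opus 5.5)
Your overall route is the paper's: Strichartz for \eqref{4.18}, Lemma~\ref{lemma4.3} for the commutator, Lemma~\ref{lemma4.4} for the nonlinearity, smallness of $\|u_0\|_{L^2}$, and continuation in $T$. The difference is that the paper uses contraction only for local existence and then applies the a priori bound \eqref{A.2} to the genuine solution. You instead close a global contraction on $(1,T)$ in one step, and that step fails as written.

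The factor $\|u_0\|_{L^2}^{\theta}$ in Lemma~\ref{lemma4.4} comes from mass conservation $\|u(t)\|_{L^2}=\|u_0\|_{L^2}$. That identity holds for solutions, not for an arbitrary element $u$ of your ball. For such $u$ the factor is only $\|u\|_{L^\infty_tL^2_x}^{\theta}\le R^{\theta}$, so the nonlinear term is merely $\lesssim R^{p}$. In the same way, bounding $\|u\|_{L^\infty_x}$ by $t^{(\varepsilon-\frac32)\frac{3}{2s}}R$ makes your contraction constant of size $R^{p-1}$. Since $R\sim\|u_0\|_{L^2}+\||J_A|^su(1)\|_{L^2}$ and only $\|u_0\|_{L^2}$ is assumed small, neither the self-map nor the contraction closes. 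As written, you would need all of the data to be small.

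The fix is to keep the $L^2$ factor from \eqref{4.23}, namely $\|u\|_{L^\infty_x}\lesssim t^{(\varepsilon-\frac32)\frac{3}{2s}}\|u\|_{L^2}^{1-\frac{3}{2s}}\|u\|_{X_T}^{\frac{3}{2s}}$. Then work on the set where $\|u\|_{L^\infty_tL^2_x}\le 2\|u_0\|_{L^2}$ and $\||J_A|^su\|_{L^\infty_tL^2_x}\le R$. On this set $\|\Phi(u)\|_{L^\infty_tL^2_x}\le\|u_0\|_{L^2}+C\|u_0\|_{L^2}^{1+\theta}R^{\frac{3(p-1)}{2s}}$, and the contraction constant is $C\|u_0\|_{L^2}^{\theta}R^{\frac{3(p-1)}{2s}}$. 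Both are acceptable once $\|u_0\|_{L^2}$ is small relative to $R$, which is the sense of ``small enough'' in the statement. Alternatively, do as the paper does and apply the Lemma~\ref{lemma4.4} bound only to the actual solution.

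Once the $L^2$ radius is tracked separately, you also do not need $\delta$ for the $V(s)$ term. For $a<s$, Young's inequality gives $C_s\||J_A|^{a}w\|_{L^2}\le\frac14\||J_A|^{s}w\|_{L^2}+C\|w\|_{L^2}$, which absorbs that term.

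Two smaller corrections:
\begin{itemize}
\item In the equation for $|J_A|^s\Phi(u)$, the commutator term $it^{s-1}M(t)V(s)M(-t)$ acts on $\Phi(u)$, not on the input $u$.
\item That term is absent from your difference estimate because the estimate is done in $L^\infty_tL^2_x$ with no $|J_A|^s$ applied, not because it is linear. A linear term does not cancel in a difference.
\end{itemize}
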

\begin{proof}
Applying the Contraction Mapping Principle, we easily obtain that the solution $|J_{A}|^{s}u$ is locally well-posed in $L^{\infty}_tL^2_x$. Furthermore, according Strichartz estimates to nonlinear Schr\"odinger equation \eqref{4.18}, we obtain that
\begin{flalign}
&\||J_{A}|^{s}u\|_{L_{t}^{\infty}((1,T);L_{x}^{2}(\mathbb{R}^{3}))}\nonumber\\
\leqslant&C_{1}\||J_{A}|^{s}u(1)\|_{L_{x}^{2}(\mathbb{R}^{3})}\nonumber\\
+&C_{s}\|t^{s-1}M(t)V(s)M(-t)u\|_{L_{t}^{4}((1,T);L_{x}^{1}(\mathbb{R}^{3}))}\nonumber\\
+&C_{2}\||J_{A}|^{s}(|u|^{p-1}u)\|_{L_{t}^{1}((1,T);L_{x}^{2}(\mathbb{R}^{3}))}.\label{A.1}
\end{flalign}
Then, repeating the previous process in Lemma \ref{lemma4.3} and Lemma \ref{lemma4.4}, it follows that for $\forall s\in(\frac{3}{2},\frac{5}{3})$ and $\frac{5}{3}<p<5$
\begin{flalign}
&\||J_{A}|^{s}u\|_{L_{t}^{\infty}((1,T);L_{x}^{2}(\mathbb{R}^{3}))}\nonumber\\
\leqslant&C_{1}\||J_{A}|^{s}u(1)\|_{L_{x}^{2}}+C_{s}(\||J_{A}|^{\frac{3}{2}-\varepsilon}u\|_{L^{\infty}_{t}L^{2}_{x}}+\||J_{A}|u\|_{L^{\infty}_{t}L^{2}_{x}})\nonumber\\
+&C_{2}\|u_{0}\|^{(p-1)\cdot (1-\frac{3}{2s})}_{L_{x}^{2}}\cdot(\||J_{A}|^{s}u\|_{L_{t}^{\infty}L_{x}^{2}}+\||J_{A}|^{\frac{3}{2}-\varepsilon}u\|_{L_{t}^{\infty}L_{x}^{2}})^{\frac{3(p-1)}{2s}}
\nonumber\\
\cdot&(\||J_{A}|^{\frac{3}{2}-\varepsilon}u\|_{L_{t}^{\infty}L_{x}^{2}}+\||J_{A}|^{s}u\|_{L_{t}^{\infty}L_{x}^{2}}+\||J_{A}|u\|_{L_{t}^{\infty}L_{x}^{2}}).\label{A.2}
\end{flalign}
Therefore, let $\|u_{0}\|_{L^{2}_{x}(\mathbb{R}^{3})}$ be small enough, we obtain that
\begin{equation}\label{A.3}
\||J_{A}|^{s}u\|_{L_{t}^{\infty}((1,T);L_{x}^{2}(\mathbb{R}^{3}))}\leqslant C\||J_{A}|^{s}u(1)\|_{L_{x}^{2}}.
\end{equation}	
Next, when $t\in(T,2T)$, we can directly obtain that
\begin{equation}\label{A.4}
\||J_{A}|^{s}u\|_{L_{t}^{\infty}((T,2T);L_{x}^{2}(\mathbb{R}^{3}))}\leqslant C_{1}\||J_{A}|^{s}u(T)\|_{L_{x}^{2}}\leqslant C_{2}\||J_{A}|^{s}u(1)\|_{L_{x}^{2}}.
\end{equation}	
We can continue to do it for $t\in(1,T),(T,2T),(2T,3T)...$ until $t=\infty$, that is to say, \eqref{4.18} is globally well-posed for $|J_{A}|^{s}u\in L_{t}^{\infty}((1,\infty);L_{x}^{2}(\mathbb{R}^{3}))$.
\end{proof}

\begin{remark}\label{remark A.1}
It is well known that the solution to nonlinear Schr\"odinger equation \eqref{1.1}	conservers energy, therefore, we have the initial value problem in \eqref{1.1} is globally well-posed in $\dot{H}^{1}(\mathbb{R}^{3})$. In fact,
\begin{equation*}
\|u\|_{\dot{H}^{1}(\mathbb{R}^{3})}^{2}=\|\nabla u\|_{L^{2}_{x}}^{2}\lesssim E(u)+\|u\|^{p+1}_{L^{p+1}}
=E(u_{0})+\|u\|^{p+1}_{L^{p+1}_{x}},
\end{equation*}
where
\begin{equation}
\label{A.5}
E(u_{0})=\|\nabla u_{0}\|_{L^{2}_{x}}^{2}+\|u_{0}\|^{p+1}_{L^{p+1}_{x}}, \end{equation}
and
\begin{flalign}
&\|u\|^{p+1}_{L^{p+1}_{x}(\mathbb{R}^{3})}\nonumber\\
=&\|M(-t)u\|^{p+1}_{L^{p+1}_{x}(\mathbb{R}^{3})}\nonumber\\
\leqslant&C\|M(-t)u\|_{L^{2}_{x}(\mathbb{R}^{3})}^{(1-3(\frac{1}{2}-\frac{1}{p+1}))(p+1)}\cdot\|\nabla(M(-t)u)\|_{L^{2}_{x}(\mathbb{R}^{3})}^{3(\frac{1}{2}-\frac{1}{p+1})(p+1)}\nonumber\\
\leqslant&C\|u_{0}\|_{L^{2}_{x}(\mathbb{R}^{3})}^{3-\frac{p+1}{2}}\cdot t^{-\frac{3}{2}(p-1)}\cdot\||J_{A}|u\|_{L^{2}_{x}(\mathbb{R}^{3})}^{\frac{3}{2}(p-1)}\label{A.6}.
\end{flalign}
Combining \eqref{A.5} and \eqref{A.6}, we obtain that for $1<p<5$
\begin{flalign}
&\|u\|_{\dot{H}^{1}(\mathbb{R}^{3})}^{2}\nonumber\\
\leqslant&C\|\nabla u_{0}\|_{L^{2}_{x}}^{2}+\|u_{0}\|_{\dot{H}^{1}_{x}}^{p+1}+\|u_{0}\|_{\Sigma_{1}}^{p+1}\nonumber\\
\leqslant&C\|u_{0}\|_{\Sigma_{1}}^{p+1}.\label{A.7}
\end{flalign}
That is to say, it make sense to inequality \eqref{3.10} in Lemma \ref{lemma3.2}.
\end{remark}
\begin{remark}\label{remark A.2}
Furthermore, again applying estimate \eqref{4.23} and the global well-posedness of $|J_{A}|^{s}u$, we have
\begin{flalign}
&\|u\|_{L^{\infty}_{t,x}((1,\infty)\times\mathbb{R}^{3})}\nonumber\\
=&\|M(-t)u\|_{L^{\infty}_{t,x}((1,\infty)\times\mathbb{R}^{3})}\nonumber\\
\leqslant& C\|t^{(\varepsilon-\frac{3}{2})\cdot\frac{3}{2s}}\|_{L^{\infty}_{t}(1,\infty)}\cdot\|u\|_{L^{2}_{x}(\mathbb{R}^{3})}^{1-\frac{3}{2s}}\cdot(\||J_{A}|^{\frac{3}{2}-\varepsilon}u\|_{L^{2}_{x}L_{t}^{\infty}}+\||J_{A}|^{s}u\|_{L^{2}_{x}L_{t}^{\infty}})^{\frac{3}{2s}}\nonumber\\
\leqslant& C\|t^{(\varepsilon-\frac{3}{2})\cdot\frac{3}{2s}}\|_{L^{\infty}_{t}(1,\infty)}\cdot\||J_{A}|^{s}u(1)\|_{L_{x}^{2}}\nonumber\\
\leqslant& C\||J_{A}|^{s}u(1)\|_{L_{x}^{2}}.\label{A.8}
\end{flalign}
Hence, we obtain that the initial value problem of \eqref{1.1} is globally well-posed in $L^{\infty}_{t,x}((1,\infty)\times\mathbb{R}^{3})$. In other words, it make sense to our decay result \eqref{1.4} in Theorem \ref{theorem1.1}.
\end{remark}

\noindent\textbf{Acknowledgements:} We would like to thank the reviewer for the many useful comments.

\end{document}